\documentclass[10pt]{amsart}

\usepackage{amsmath,amsthm,amssymb,color,fullpage,graphicx,appendix,multirow,subfig,longtable,bbold,mathabx,latexsym,setspace,pinlabel,url}
\usepackage[top=1in,bottom=1in,left=1in,right=1in]{geometry} 
\input xypic

\def\A{\mathcal{A}_R} 
\def\Az{\mathcal{A}_{\integers/2}}
\def\AZ{\mathcal{A}_Z}
\def\calA{\mathcal{A}}
\def\calE{\mathcal{E}}
\newcommand{\aug}{\operatorname{Aug}}
\newcommand{\s}{\operatorname{sgn}}

\def\integers{{\mathbb{Z}}}
\def\reals{{\mathbb R}}



\def\ker{{\hbox{ker}}\,}
\def\other{\text{otherwise}}
\newcommand{\st}{\ensuremath{\text{ s.t. }}}

\def\bigno{\bigskip\noindent} 
 
\def\medno{\medskip\noindent}

\newcommand{\diag}[1]{\begin{center}\begin{minipage}{5in}\xymatrix{#1}\end{minipage}\end{center}}
\newcommand{\seq}[1]{\begin{center}\begin{minipage}{5in}\xymatrix@R=.125pc{#1}\end{minipage}\end{center}}
 
\def\arr{\ar[r]} 
\def\ard{\ar[d]}

\def\mapstol{\ar@{|->}[l]} 
\def\mapstor{\ar@{|->}[r]}
\def\mapstod{\ar@{|->}[d]}
\def\mapstou{\ar@{|->}[u]}



\newtheorem{lem}{Lemma}[section]
\newtheorem{thm}[lem]{Theorem}

\newtheorem*{thmRhoOdd}{Theorem \ref{thm:rhoOdd}}
\newtheorem*{thmLift}{Theorem \ref{thm:lift}}

\theoremstyle{definition} 
\newtheorem{defn}[lem]{Definition}

\newtheorem{rmk}[lem]{Remark} 
\newtheorem{note}[lem]{Notation}

\onehalfspace


\begin{document}
\title{Augmentations and Rulings of Legendrian Knots}
\author{C. Leverson}
\address{Duke University, Durham, NC 27708}
\email{cleverso@math.duke.edu}
\date{\today}
\maketitle

\begin{abstract}
  For any Legendrian knot $\Lambda$ in $(\reals^3,\ker(dz-ydx))$, we
  show that the existence of an augmentation to any field of the
  Chekanov-Eliashberg differential graded algebra over
  $\integers[t,t^{-1}]$ is equivalent to the existence of a ruling of
  the front diagram, generalizing results of Fuchs, Ishkhanov, and
  Sabloff. We also show that any even graded augmentation must send
  $t$ to $-1$.
\end{abstract}

\section{Introduction}
A Legendrian knot in $(\reals^3,\xi_{\text{std}})$ is an embedding
$\Lambda:S^1\to\reals^3$ which is everywhere tangent to the contact
planes. In \cite{Chekanov} (see related \cite{EliashbergInvariants}),
Chekanov introduced a combinatorial way to associate a non-commutative
differential graded algebra (DGA) over $\integers/2$ to a Lagrangian
diagram of a Legendrian knot $\Lambda$ in $\reals^3$. The DGA is
generated by crossings of $\Lambda$ and the differential is determined
by a count of immersed polygons whose edges lie on the knot and whose
corners lie at crossings of $\Lambda$. In the literature, this DGA is
called the Chekanov-Eliashberg DGA. Chekanov showed that the homology
of the DGA is invariant under Legendrian isotopy. He also showed that
a linearized version of the homology of the DGA could be used to
distinguish between two Legendrian $5_2$ knots in $\reals^3$ which
could not be distinguished by the rotation and Thurston-Bennequin
numbers. In the early 2000's, Etnyre, Ng, and Sabloff gave a lift of
the Chekanov-Eliashberg DGA to a DGA $(\A,\partial)$ over
$R=\integers[t,t^{-1}]$ which has a full $\integers$-grading (see
\cite{EtnyreInvariants}). One can recover the Chekanov-Eliashberg DGA
by setting $t=1$, which requires one to consider the grading mod
$2r(\Lambda)$, and considering the coefficients mod $2$ (where
$r(\Lambda)$ is the rotation number, defined in \S
\ref{sec:background}). 

Another Legendrian knot invariant uses generating families, functions
whose critical values generate front diagrams of Legendrian
knots. Following ideas introduced by Eliashberg in
\cite{EliashbergWave}, Fuchs \cite{FuchsAug} and Chekanov-Pushkar
\cite{ChekanovFronts} gave invariants involving decompositions of the
generating families, which are now called ``normal rulings'' and can
also be used to distinguish between Chekanov's $5_2$ knots.

Remarkably, there is a close connection between the
Chekanov-Eliashberg DGA and rulings. Fuchs \cite{FuchsAug},
Fuchs-Ishkhanov \cite{FuchsIshkhanov}, and Sabloff \cite{SabloffAug}
showed that the existence of a ruling is equivalent to the existence
of an augmentation to $\integers/2$ of the Chekanov-Eliashberg DGA,
where an augmentation to a ring $S$ is an algebra map $\epsilon:\A\to
S$ such that $\epsilon\circ\partial=0$ and $\epsilon(1)=1$.

The main result of this paper gives a generalization of these results
using an extension of Sabloff's construction in \cite{SabloffAug}. Let
$F$ be a field and $R=\integers[t,t^{-1}]$. Given a $\rho$-graded
augmentation $\epsilon:\A\to F$ of the
$\integers[t,t^{-1}]$-differential graded algebra $(\A,\partial)$ of a
knot $\Lambda$, we will find a $\rho$-graded normal ruling of the knot
diagram.  Conversely, given a $\rho$-graded normal ruling of the knot
diagram, we will define a $\rho$-graded augmentation $\epsilon:\A\to
F$ of the DGA over $\integers[t,t^{-1}]$ with $\epsilon(t)=-1$. (For
$\rho=0$, this is the so called graded case and for $\rho=1$, the
ungraded case.) Terminology will be introduced in \S
\ref{sec:background}.

In \S \ref{sec:augRuling} and \S \ref{sec:rulingAug}, we will show:

\begin{thm}\label{thm:main}
  Let $\Lambda$ be a Legendrian knot in $\reals^3$. Given a field $F$,
  $(\A,\partial)$ has a $\rho$-graded augmentation $\epsilon:\A\to F$
  if and only if any front diagram of $\Lambda$ has a $\rho$-graded
  normal ruling. Furthermore, if $\rho$ is even, then
  $\epsilon(t)=-1$.
\end{thm}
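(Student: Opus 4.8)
The plan is to prove the two implications of the equivalence separately and then establish the normalization $\epsilon(t)=-1$ for $\rho$ even. Throughout I would reduce to a single convenient front diagram: since the existence of a $\rho$-graded normal ruling is a Legendrian isotopy invariant of fronts (Chekanov--Pushkar \cite{ChekanovFronts}) and the existence of a $\rho$-graded augmentation of $(\A,\partial)$ depends only on the stable-tame-isomorphism type of the DGA, it suffices to prove ``augmentation exists $\iff$ this front has a ruling'' for one diagram in each Legendrian isotopy class. I would take the front in preferred plat position, possibly after inserting Sabloff's ``dips'' (sequences of Reidemeister II moves), so that the resolution to the Lagrangian projection and the combinatorics of $\partial$ are under control; the statement then propagates to \emph{any} front by the two invariance results.

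For the direction ruling $\Rightarrow$ augmentation, I would extend Sabloff's construction in \cite{SabloffAug} to the $\integers[t,t^{-1}]$ setting. Given a $\rho$-graded normal ruling $\mathcal R$, resolve the front and define an algebra map $\epsilon\colon\A\to F$ by $\epsilon(t)=-1$, by $\epsilon(a)=\pm 1$ on each switched crossing of $\mathcal R$ (with a sign read off from the local ruling picture and the Maslov potential), and by $\epsilon(a)=0$ on every other generator. The heart of this direction is verifying $\epsilon\circ\partial=0$: for each generator $a$ one must show the signed sum $\sum_D \pm\,\epsilon(t)^{n_D}\,\epsilon(b_1)\cdots\epsilon(b_k)$ over the disks $D$ contributing to $\partial a$ vanishes in $F$. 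The only surviving terms are disks all of whose corners sit at switches; these either occur in cancelling pairs (obtained by swapping which of two ``parallel'' ruling paths a piece of $\partial D$ follows across an eye of $\mathcal R$) or cancel because the exponent $n_D$ recording the rotation of $\partial D$ has the parity that makes $\epsilon(t)^{n_D}=-\epsilon(t)^{n_{D'}}$ — which is precisely where $\epsilon(t)=-1$ is needed. The $\rho$-gradedness of $\mathcal R$ and the resulting parity constraints on the degrees of switched crossings are what keep this sign bookkeeping consistent.

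For the direction augmentation $\Rightarrow$ ruling, I would start from a $\rho$-graded augmentation $\epsilon\colon\A\to F$ and apply the standard algebra automorphism of $\A$ sending each generator $a\mapsto a+\epsilon(a)$ and fixing $t$; conjugating $\partial$ by it produces a stable-tame-isomorphic DGA whose differential has no constant term, with the ``count of rigid disks'' now encoded by $\epsilon$ itself. In the resolved front I would then declare a crossing $a$ to be a switch of the candidate ruling exactly when $\epsilon(a)\neq 0$, and check the normal-ruling axioms: that the paths pair strands correctly and close up at left and right cusps, that switches occur only between strands adjacent in the ruling order, and the normality condition on the eyes at switches. Each axiom is forced by applying $\epsilon$ to $\partial$ of a suitable nearby generator and using $\epsilon\circ\partial=0$: bigon and cusp disks pin down the pairing, and triangle-type disks (one positive and two negative corners, or their mirror) supply the adjacency and normality constraints, argued by induction on crossings ordered by $x$-coordinate. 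I expect this direction — together with the $\integers[t,t^{-1}]$-signed bookkeeping of the previous paragraph — to be the main obstacle, since one must exclude illegal switch patterns using only a field-valued augmentation of the signed DGA rather than the $\integers/2$ machinery of Fuchs--Ishkhanov \cite{FuchsIshkhanov} and Sabloff \cite{SabloffAug}.

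Finally, for the claim that $\epsilon(t)=-1$ when $\rho$ is even, I would argue locally at a right cusp of the resolved front: the crossing $c$ produced by resolving such a cusp has $\partial c = 1 + t^{\pm 1} + (\text{words of length} \ge 1 \text{ in the remaining crossings})$, and after arranging the front (again via dips, if needed) so that every term of the latter kind has a corner at a generator of degree $\not\equiv 0 \pmod{\rho}$ — possible when $\rho$ is even because the corresponding disks must meet the front strands an odd number of times, forcing an odd-degree corner — the augmentation kills those terms and leaves $1+\epsilon(t)^{\pm 1}=0$ in $F$. Outside characteristic $2$ this determines $\epsilon(t)$ up to the sign/exponent convention, and a direct check of the sign conventions of $(\A,\partial)$ from \cite{EtnyreInvariants} gives $\epsilon(t)=-1$; in characteristic $2$ the statement is the tautology $\epsilon(t)=1$. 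The essential input of the hypothesis is thus the evenness of $\rho$, which is what allows the grading to annihilate the higher-order terms of $\partial c$.
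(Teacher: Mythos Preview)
Your overall strategy---reduce to plat position, use dipped diagrams, and extend Sabloff's construction---is the right one and matches the paper. But two parts of the sketch have genuine gaps.

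The argument for $\epsilon(t)=-1$ when $\rho$ is even cannot be made local at a single right cusp. In the resolved plat trefoil, $\partial q_1 = t + c_1 + c_3 + c_1c_2c_3$ with $\lvert c_i\rvert=0$, so every term survives under any $\rho$-graded augmentation; applying $\epsilon$ to $\partial q_1$ alone does not determine $\epsilon(t)$, and no amount of dipping makes the non-constant terms acquire an odd-degree corner. The paper's proof (Theorem~\ref{thm:rhoEven}) is genuinely global: after dipping, one tracks the signed product $\prod_{r,s\text{ paired}}\epsilon(a^k_{rs})^{\s(r,k)}$ across the diagram, shows it is invariant under every crossing and equal to $1$ (this uses that for $\rho$ even all ruling disks are oriented, so every switch is at a positive crossing), and then combines it with a parity count of base points (Lemma~\ref{lem:oddNumBasepts}: $c+s+a_-\equiv 1\pmod 2$, proved via $tb$, $r$, and an interlaced-pair count) to get $\epsilon(t)=\prod_i\epsilon(t_i)=(-1)^s=-1$.

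For augmentation $\Rightarrow$ ruling, the rule ``switch at $a$ exactly when $\epsilon(a)\neq 0$'' is not correct. The paper's construction switches at $c_j$ only when $\epsilon_j(c_j)\neq 0$ \emph{and} the configuration to the left is of type (a), (b), or (c); in configurations (d), (e), (f) the crossing is augmented but one does \emph{not} switch, and instead sets the new $b$-lattice values so that Property~(R) is preserved. Moreover, the relevant value is $\epsilon_j(c_j)$, the augmentation \emph{after} the correction terms of Lemma~\ref{lem:3.2equiv} from the dips already inserted, not the original $\epsilon(c_j)$. Without both of these refinements the candidate decomposition need not be a normal ruling.
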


Note that this generalizes Fuchs, Fuchs-Ishkhanov, and Sabloff's
results, giving a correspondence between normal rulings and
augmentations to any field $F$ of the DGA over
$\integers[t,t^{-1}]$. This does not contradict the result in
\cite{NgSatellites} that there are augmentations to matrix algebras
which do not send $t$ to $-1$ as the matrix algebras are not fields.

Theorem \ref{thm:main} can be extended and interpreted in terms of the
augmentation variety for a Legendrian knot. Define
\[\aug_\rho(\Lambda)=\{\epsilon(t):\epsilon\text{ a $\rho$-graded
  augmentation of }(\A,\partial)\}\subset F^*\] the {\bf augmentation
  variety} of $\Lambda$, where $F^*=F\backslash\{0\}$.

In higher dimensions, understanding the augmentation variety is
interesting and useful (see \cite{Aganagic} and \cite{NgFramed}), so
there has been some question as to whether we can determine the
augmentation variety in $\reals^3$ with the standard contact
structure. In \S \ref{sec:augRuling}, we prove:

\begin{thm}\label{thm:rhoOdd}
  If $\rho$ is odd and $\rho\vert2r(\Lambda)$, then
  \[\aug_\rho(\Lambda)=\begin{cases}
    \{-x^2:x\in F^*\}&\text{ if there exists a }\rho\text{-graded normal ruling of $\Lambda$ which is not oriented (introduced in \S\ref{sec:augRuling}})\\
    \{-1\}&\text{ if there exists a }\rho\text{-graded normal ruling of }\Lambda\text{ and all rulings are oriented}\\
    \emptyset&\text{ if there are no }\rho\text{-graded normal rulings
      of }\Lambda.
  \end{cases}\]
\end{thm}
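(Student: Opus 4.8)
The plan is to obtain Theorem~\ref{thm:rhoOdd} from Theorem~\ref{thm:main} by inspecting more closely the two constructions used to prove it. First, the empty case is immediate: if $\Lambda$ has no $\rho$-graded normal ruling, then by Theorem~\ref{thm:main} there is no $\rho$-graded augmentation $\A\to F$, so $\aug_\rho(\Lambda)=\emptyset$. In the remaining cases a $\rho$-graded normal ruling exists, and the construction of \S\ref{sec:rulingAug} (the ``ruling $\Rightarrow$ augmentation'' direction of Theorem~\ref{thm:main}) produces from it a $\rho$-graded augmentation with $\epsilon(t)=-1$; hence $-1\in\aug_\rho(\Lambda)$ whenever a ruling exists. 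It therefore remains to pin down the upper bound on $\aug_\rho(\Lambda)$ and, in Case~1, a matching lower bound.

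For the upper bound I would take an arbitrary $\rho$-graded augmentation $\epsilon:\A\to F$ and let $\Gamma_\epsilon$ be the $\rho$-graded normal ruling that \S\ref{sec:augRuling} associates to it. Since $\rho\mid 2r(\Lambda)$, the degree of $t$ is $\equiv 0\pmod\rho$, so every switched crossing of $\Gamma_\epsilon$ has degree $\equiv 0\pmod\rho$; the switches of even degree enter the identities $\epsilon\circ\partial=0$ in a way that is forced and independent of $\epsilon(t)$, while a switch $c$ of odd degree enters through the value $\epsilon(c)$ only in the combination $\epsilon(c)^2$. Combining the relations $\epsilon\circ\partial=0$ at the generators adjacent to the base point and to the switches, and keeping track of the signs in the $\integers[t,t^{-1}]$-lifted differential, I expect a formula of the shape $-\epsilon(t)=\prod_{c}\epsilon(c)^2$, the product over the odd-degree switched crossings of $\Gamma_\epsilon$. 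In particular $-\epsilon(t)$ is always a square in $F^*$; if $\Gamma_\epsilon$ is oriented there are no odd-degree switches, the product is empty, and $\epsilon(t)=-1$. In Case~2 every $\rho$-graded ruling, hence $\Gamma_\epsilon$, is oriented, so $\aug_\rho(\Lambda)\subseteq\{-1\}$, and with $-1\in\aug_\rho(\Lambda)$ this gives $\aug_\rho(\Lambda)=\{-1\}$; in Case~1 it gives $\aug_\rho(\Lambda)\subseteq\{-x^2:x\in F^*\}$.

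For the lower bound in Case~1, given a non-oriented $\rho$-graded normal ruling $\Gamma$ I would fix a switched crossing $c_0$ of odd degree and run the \S\ref{sec:rulingAug} construction leaving the value at $c_0$ as a free parameter $x\in F^*$; this is allowed because $\deg c_0\equiv 0\pmod\rho$, so $\rho$-gradedness is preserved, and the same sign computation as above shows that the conditions $\epsilon\circ\partial=0$ can be met with $\epsilon(c_0)=x$ precisely at the cost of setting $\epsilon(t)=-x^2$ (one can also obtain this from the single augmentation with $\epsilon(t)=-1$ by composing with the $\integers$-algebra endomorphism of $\A$ rescaling each generator $a$ by $x^{|a|}$, which sends augmentations to augmentations and multiplies $\epsilon(t)$ by $x^{-2r(\Lambda)}$). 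As $x$ ranges over $F^*$ this realizes all of $\{-x^2:x\in F^*\}$, and together with the upper bound we conclude $\aug_\rho(\Lambda)=\{-x^2:x\in F^*\}$.

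The main obstacle is the middle step: establishing the formula expressing $\epsilon(t)$ as the negative of the square of the product of the $\epsilon(c)$ over the odd-degree switched crossings of the induced ruling, and in particular showing that the even-degree switches do not affect $\epsilon(t)$. This is where the hypothesis that $\rho$ is odd is essential — it is what makes the parity of a switched crossing a well-defined notion for a $\rho$-graded ruling — and it will require carefully propagating the orientation signs of the lifted differential around the ruling disks. Once that formula is in hand, the containment $\aug_\rho(\Lambda)\subseteq\{-x^2:x\in F^*\}$ and the construction achieving equality follow with little extra work.
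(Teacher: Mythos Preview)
Your overall architecture matches the paper's: the empty case follows from Theorem~\ref{thm:main}, the value $-1$ is always attained via \S\ref{sec:rulingAug}, the upper bound is obtained by passing to the dipped diagram and the associated ruling, and the non-oriented lower bound comes from re-running the ruling-to-augmentation construction with a free parameter. Your identification of ``odd-degree switched crossing'' with ``negative switched crossing'' (i.e.\ a switch of type $-$(a), $-$(b), or $-$(c)) is also correct, and this is exactly the paper's criterion for a ruling to be non-oriented.

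However, the formula you conjecture, $-\epsilon(t)=\prod_{c\ \text{odd-degree switch}}\epsilon(c)^2$, is not correct, and this is the heart of the argument. If you track the product $\prod_{r,s\ \text{paired}}\epsilon(a^k_{rs})$ through the dipped diagram using Figure~\ref{fig:basepoints}, it picks up a factor of $\epsilon(c_k)^2$ precisely at switches of type (a) --- both $+$(a) and $-$(a) --- and is unchanged at (b), (c), (d), (e), (f). So the even/odd (equivalently $+$/$-$) dichotomy is not what governs when a square of $\epsilon(c)$ appears. Moreover, $\epsilon(t)$ is computed from the values $\epsilon(a^n_{2m-2k+2,2m-2k+1})$ with \emph{signed} exponents depending on the orientation at the right cusps, not from the unsigned product. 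The paper handles the two halves separately: the inclusion $\aug_\rho(\Lambda)\subset\{-x^2:x\in F^*\}$ is Remark~\ref{rmk:squared}, obtained by observing directly from Figure~\ref{fig:basepoints} that each $\epsilon(a^k_{rs})$ is always a square; the oriented case reuses the signed-product identity $\prod(\epsilon(a^k_{rs}))^{\s(r,k)}=1$ from the proof of Theorem~\ref{thm:rhoEven}, which is valid whenever every switch is of $+$ type. Neither of these reduces to your single product formula.

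Your alternative lower-bound argument via the rescaling $a\mapsto x^{|a|}a$ also does not work in general: it changes $\epsilon(t)$ by $x^{-2r(\Lambda)}$, so it produces only $\{-x^{2r(\Lambda)}:x\in F^*\}$, which is strictly smaller than $\{-x^2:x\in F^*\}$ when $r(\Lambda)$ is even (and collapses to $\{-1\}$ when $r(\Lambda)=0$, even though non-oriented rulings can exist in that case). The paper instead picks the \emph{last} crossing $c_k$ of type $-$(a), $-$(b), or $-$(c), sets $\epsilon(c_k)=x^{\pm1}$ and all other switched crossings to $1$, and then applies the signed-product argument on the region to the right of $c_k$ (where the ruling is oriented) to compute $\epsilon(t)=-x^2$ explicitly.
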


For example, the right handed trefoil $\Lambda$ in Figure
\ref{fig:trefoilsEx} has DGA $(\A,\partial)$ with $\lvert c_i\rvert=0$
for $1\leq i\leq 3$, $\lvert c_4\rvert=\lvert c_5\rvert=1$, and $\lvert t\rvert=0$. Then
$\A=\A(c_1,\ldots,c_5)$ with differential
\begin{align*}
  \partial c_1&=\partial c_2=\partial c_3=0\\
  \partial c_4&=t+c_1+c_3+c_1c_2c_3\\
  \partial c_5&=1-c_1-c_3-c_3c_2c_1.
\end{align*}
Let $F$ be a field. If $\epsilon:\A\to F$ is a $1$-graded (ungraded)
augmentation, then
\begin{align*}
  0&=\epsilon(t)+\epsilon(c_1)+\epsilon(c_3)+\epsilon(c_1)\epsilon(c_2)\epsilon(c_3)\\
  0&=1-\epsilon(c_1)-\epsilon(c_3)-\epsilon(c_3)\epsilon(c_2)\epsilon(c_1)
\end{align*}
and so $\epsilon(t)=-1$. Thus $\aug_1(\Lambda)=\{-1\}$.

Now consider the left handed trefoil $\Lambda'$ depicted in Figure
\ref{fig:trefoilsEx}. The associated DGA is $(\A',\partial')$ with
$\lvert c_1\rvert=\lvert c_2\rvert=\lvert c_4\rvert=-1$, $\lvert
c_3\rvert=\lvert c_5\rvert=\lvert c_6\rvert=1$, and $\lvert
t\rvert=2$. Then $\A=\A(c_1,\ldots,c_6)$ with differential
\begin{align*}
  \partial' c_1&=\partial'c_2=\partial'c_3=0\\
  \partial' c_4&=t+c_1c_2\\
  \partial' c_5&=1+c_2c_3\\
  \partial' c_6&=1+c_3c_1.
\end{align*}
Let $F$ be a field. If $\epsilon:\A'\to F$ is a $1$-graded (ungraded)
augmentation, then
\begin{align*}
  0&=\epsilon(t)+\epsilon(c_1)\epsilon(c_2)\\
  0&=1+\epsilon(c_2)\epsilon(c_3)\\
  0&=1+\epsilon(c_3)\epsilon(c_1).
\end{align*}
Therefore $\epsilon(c_2)=-(\epsilon(c_3))^{-1}=\epsilon(c_1)$ and so
$\epsilon(t)=-(\epsilon(c_3))^{-2}$. So any nonzero choice of
$\epsilon(c_3)$ yields an augmentation and thus
$\aug_1(\Lambda')=\{-x^2:x\in F^*\}$.

\begin{figure}
  \labellist
  \small
  \pinlabel $c_1$ [b] at 44 288
  \pinlabel $c_2$ [b] at 256 290
  \pinlabel $c_3$ [b] at 447 289
  \pinlabel $c_4$ [b] at 543 389
  \pinlabel $c_5$ [t] at 546 160

  \pinlabel $c_1$ [br] at 1145 240
  \pinlabel $c_2$ [bl] at 1318 240
  \pinlabel $c_3$ [t] at 1225 96
  \pinlabel $c_4$ [r] at 1220 392
  \pinlabel $c_5$ [tr] at 1398 91
  \pinlabel $c_6$ [tl] at 1060 95
  \endlabellist

  \includegraphics[width=3.5in]{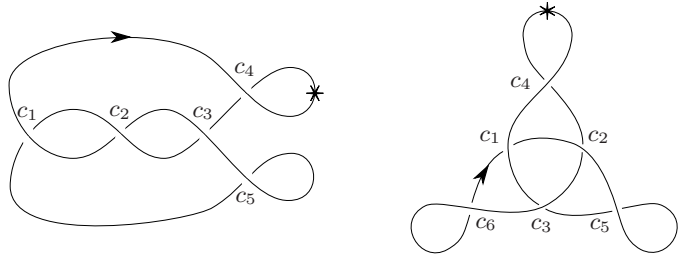}
  \caption{The left figure is a Legendrian right handed trefoil and
    the right is a Legendrian left handed trefoil with crossings
    labeled. The $*$ indicates the placement of the base point
    corresponding to $t$.}
  \label{fig:trefoilsEx}
\end{figure}

This result complements the recent work of Henry and Rutherford
\cite{Henry}. Henry and Rutherford show that counts of the
augmentations to any finite field, without restrictions on where the
augmentation sends $t$, are Legendrian knot invariants and that they
can be related to the ruling polynomials of the knot, thus showing
that the Chekanov-Eliashberg algebra determines the ruling
polynomial. Our result shows that if $\rho$ is even, one can restrict
the count of $\rho$-graded augmentations to augmentations which send
$t$ to $-1$, as there are not any which do not.

Theorem \ref{thm:main} tells us that if there exists an augmentation
to $\integers/2$, then there exists an augmentation to any field. In
\S \ref{sec:lift}, we will show that given an augmentation to
$\integers/2$ of the Chekanov-Eliashberg DGA, we can use constructions
similar to those in the proof of Theorem \ref{thm:main} to define an
augmentation to any ring. In particular:

\begin{thm}\label{thm:lift}
  Let $\Lambda$ be a Legendrian knot in $\reals^3$. Let
  $(\Az,\partial)$ be the Chekanov-Eliashberg DGA over $\integers/2$
  and let $(\A,\partial)$ be the DGA over $R=\integers[t,t^{-1}]$. If
  $\epsilon':\Az\to\integers/2$ is an augmentation of
  $(\Az,\partial)$, then one can find a lift of $\epsilon'$ to an
  augmentation $\epsilon:\A\to\integers$ of $(\A,\partial)$ such that
  $\epsilon(t)=-1$.
\end{thm}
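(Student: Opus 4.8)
The plan is to obtain $\epsilon$ by feeding $\epsilon'$ through the two constructions that prove Theorem \ref{thm:main}, being careful to record that the second one works over $\integers$ (indeed over any ring) and that the composite genuinely recovers $\epsilon'$ modulo $2$; since $\integers$ is the initial ring, producing the lift to $\integers$ will also yield augmentations to an arbitrary ring by postcomposition. First I would fix a front diagram $D$ of $\Lambda$ and replace $(\Az,\partial)$ by the DGA of the resolution of $D$, so that the generators are the crossings and right cusps of $D$. Since $\integers/2$ is a field, the argument of \S\ref{sec:augRuling} (equivalently \cite{FuchsIshkhanov}, \cite{SabloffAug}) applies to $\epsilon'$ and produces a $\rho$-graded normal ruling $\mathcal{R}$ of $D$, for the value of $\rho$ appropriate to the grading of $(\Az,\partial)$, so that $\rho\mid 2r(\Lambda)$. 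I would keep track of more than $\mathcal{R}$ alone: the construction records, for each switched crossing $c$ of $\mathcal{R}$, that $\epsilon'(c)=1$, and shows $\epsilon'$ vanishes on every non-switched crossing and, after the diagram has been normalized, on the right cusps; this extra bookkeeping is what will let me match $\epsilon$ with $\epsilon'$ at the end.

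Next I would run the construction of \S\ref{sec:rulingAug} that builds an augmentation from $\mathcal{R}$, but with coefficient ring $R=\integers[t,t^{-1}]$ and target $\integers$ rather than a field $F$. The point is that this construction assigns to each generator a value in $\{0,1,-1\}$, sets $\epsilon(t)=-1$, and never divides, so it is literally defined over $\integers$. The verification that $\epsilon\circ\partial=0$ is then word for word the computation in \S\ref{sec:rulingAug}: the disks counted by $\partial c$ are organized by $\mathcal{R}$ into cancelling pairs, and with $\epsilon(t)=-1$ the signs coming from the $\integers$-grading and from $t$-powers make each pair contribute $0$; since that computation uses only the ring axioms and the identity $\epsilon(t)=-1$, nothing changes when $F$ is replaced by $\integers$. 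This produces an augmentation $\epsilon:\A\to\integers$ with $\epsilon(t)=-1$.

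It remains to check that $\epsilon$ is a lift of $\epsilon'$, i.e. that $\epsilon(c)\equiv\epsilon'(\bar c)\pmod 2$ for every generator $c$; then, since $\epsilon(t)=-1\equiv 1$, the composite $\A\to\integers\to\integers/2$ agrees with $\A\to\Az\xra{\epsilon'}\integers/2$. By construction the generators on which $\epsilon$ is nonzero are exactly the switches of $\mathcal{R}$ (together with possibly certain right cusps, depending on the normalization), and on each of these $\epsilon$ takes a value $\pm1$, which reduces to $1$; comparing with the first paragraph, these are precisely the generators on which $\epsilon'$ equals $1$, so the two agree mod $2$ generator by generator. \textbf{The main obstacle} is exactly this last matching: the assignment $\epsilon'\mapsto\mathcal{R}$ and the assignment $\mathcal{R}\mapsto$ augmentation are not a priori mutual inverses, so I expect to have to run the two constructions in tandem -- carrying the actual values $\epsilon'(c)\in\{0,1\}$, and not merely the switch/non-switch data of $\mathcal{R}$, through \S\ref{sec:rulingAug}, and choosing each integral lift in $\{0,\pm1\}$ so that the sign and telescoping cancellation in $\epsilon\circ\partial$ still closes up. A secondary point is grading bookkeeping: the $\integers$-grading on $\A$ refines the $\integers/2r(\Lambda)$-grading on $\Az$, so a generator $c$ with $\epsilon'(\bar c)\neq 0$ has degree $0$ mod $2r(\Lambda)$ but perhaps nonzero $\integers$-degree, and one checks the construction of \S\ref{sec:rulingAug} only sees the mod-$2r(\Lambda)$ information through $\rho$ and is therefore insensitive to this.
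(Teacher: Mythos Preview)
Your overall strategy---pass $\epsilon'$ through the augmentation-to-ruling construction and then through the ruling-to-augmentation construction over $\integers$---is the same as the paper's, but your verification that the result actually lifts $\epsilon'$ has a genuine gap, and the assertion you flag as ``the main obstacle'' is in fact false as stated, not merely unproved.

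Specifically, you claim that the construction of \S\ref{sec:augRuling} ``shows $\epsilon'$ vanishes on every non-switched crossing.'' It does not. Two things go wrong. First, the decision to switch at $c_j$ is made using $\epsilon_j(c_j)$, the value of the augmentation \emph{after} all the corrections from Lemma~\ref{lem:3.2equiv} coming from the dips to the left of $c_j$; this need not equal the original $\epsilon'(c_j)$. Second, even when $\epsilon_j(c_j)\neq 0$, the ruling is switched only if the local configuration is (a), (b), or (c); in configurations (d), (e), (f) the crossing is augmented but not switched (see Table~\ref{tab:augDisks} and Figure~\ref{fig:basepoints}). So the set of generators on which $\epsilon'$ is nonzero is genuinely larger than the switch set of $\mathcal R$, and the augmentation produced by \S\ref{sec:rulingAug} from $\mathcal R$ alone will not in general reduce to $\epsilon'$ mod $2$.

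The paper closes this gap by staying on the dipped diagram. One first extends $\epsilon'$ to an augmentation $\widehat\epsilon$ of the dipped DGA over $\integers/2$ (the mod-$2$ version of \S\ref{sec:augRuling}), then defines the integral lift $\widetilde\epsilon$ on the dipped diagram by Figure~\ref{fig:basepoints}; on the dipped diagram the lift property $\widetilde\epsilon\equiv\widehat\epsilon\pmod 2$ is immediate. The nontrivial step is to undo the dips: using Lemma~\ref{lem:3.2equiv} in both the $\integers/2$ and the $\integers[t^{\pm1}]$ settings gives explicit correction formulas relating $\epsilon'(c_j)$ to $\widehat\epsilon$ and $\epsilon(c_j)$ to $\widetilde\epsilon$, and since the correction disks are literally the same disks (only the signs and base-point powers differ, and those reduce to $1$ mod $2$), the two formulas agree mod $2$. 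That is what forces $\epsilon(c_j)\equiv\epsilon'(c_j)\pmod 2$ even at crossings that are augmented but not switched. Your proposal skips this comparison of correction terms, and ``running the two constructions in tandem'' is not a substitute for it.
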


In other words, we will define $\epsilon$ so that the following
diagram commutes:
\diag{(\A,\partial)\arr^{\epsilon}\ard_{t=1}&\integers\ard\\(\Az,\partial)\arr_{\epsilon'}&\integers/2}

This theorem tells us that given an augmentation to $\integers/2$ of
$(\Az,\partial)$, there exists an augmentation to any ring $S$ of
$(\A,\partial)$ which sends $t$ to $-1$.

\subsection{Outline of the article}
In \S \ref{sec:background} we recall background on Legendrian knots
and give definitions of the Chekanov-Eliashberg DGA, including sign
conventions for defining the algebra over $\integers[t,t^{-1}]$, and a
normal ruling. \S \ref{sec:augRuling} gives the proof that given an
augmentation one can define a normal ruling. \S \ref{sec:rulingAug}
finishes the proof of Theorem \ref{thm:main} by proving that given a
normal ruling one can define an augmentation. \S \ref{sec:rulingAug}
goes to prove Theorem \ref{thm:rhoOdd}, giving the augmentation
variety in the odd graded case. The paper concludes with the proof of
Theorem \ref{thm:lift} in \S \ref{sec:lift}.

\subsection{Acknowledgements}
The author thanks Lenhard Ng for introduction to the problem, for many
useful discussions, and for the contribution of the proof of Lemma
\ref{lem:oddNumBasepts}. The author also thanks Dan Rutherford for
helpful conversations. This work was partially supported by NSF grant
DMS-0846346.

\bigskip
\section{Background Material} \label{sec:background}
\subsection{Diagrams of Knots}
In this section, we will briefly review necessary ideas of Legendrian
knot theory. For further references on this subject, see
\cite{EtnyreLegendrianTrans}.

A {\bf contact structure} on a 3-manifold $M$ is a completely
nonintegrable 2-plane field $\xi$. Locally, a contact structure is the
kernel of a 1-form $\alpha$ which satisfies the non-degeneracy
condition
\[\alpha\wedge d\alpha\neq0\]
at every point in $M$. We will be concerned with the {\bf standard
  contact structure} on $\reals^3$, which is the completely
nonintegrable 2-plane field $\xi_0=\ker\alpha_0$, where
$\alpha_0=dz-ydx$. A {\bf Legendrian knot} is an embedding
$\Lambda:S^1\to\reals^3$ which is everywhere tangent to the contact
planes. A {\bf Legendrian isotopy} is an ambient isotopy of $\Lambda$
through Legendrian knots. We are interested in Legendrian isotopy
classes of Legendrian knots in $\reals^3$.

The classical invariants for Legendrian isotopy classes of knots are
the topological knot type, Thurston-Bennequin number, and rotation
number (see \cite{Bennequin}). The {\bf Thurston-Bennequin number}
measures the self-linking of a Legendrian knot $\Lambda$. If
$\Lambda'$ is a knot that is a push off of $\Lambda$ in a direction
tangent to the contact structure, then $tb(\Lambda)$ is the linking
number of $\Lambda$ and $\Lambda'$. The {\bf rotation number} $r$ of
an oriented Legendrian knot $\Lambda$ is the rotation of its tangent
vector field with respect to any global trivialization of $\xi_0$, for
example, $\{\partial_y,\partial_x+y\partial_z\}$. A natural question
is then whether these invariants with the topological knot type alone
classify Legendrian knots, in other words, whether all Legendrian
knots are ``Legendrian simple.'' Eliashberg and Fraser
\cite{EliashbergTrivial} show that Legendrian unknots are Legendrian
simple and Etnyre and Honda \cite{EtnyreTorus} show that Legendrian
torus and figure eight knots are as well.

Two particularly useful projections of Legendrian knots are the
Lagrangian projection and the front projection. The {\bf Lagrangian
  projection} is the map
\[\pi_\ell:(x,y,z)\mapsto(x,y).\]
The {\bf front projection} is the map
\[\pi_f:(x,y,z)\mapsto(x,z).\]
In general, we will call the Lagrangian projection (resp. front
projection) of a Legendrian knot a {\bf Lagrangian diagram}
(resp. {\bf front diagram}). Figure \ref{fig:trefoil} gives Lagrangian
(left) and front (right) projections of a Legendrian version of a
right handed trefoil.

\begin{figure}
  \labellist
  \small\hair 2pt 
  \pinlabel $c_1$ [t] at 50 195 
  \pinlabel $c_2$ [t] at 255 195 
  \pinlabel $c_3$ [t] at 447 195 
  \pinlabel $q_1$ [b] at 545 326 
  \pinlabel $q_2$ [t] at 545 92
  \endlabellist
  \includegraphics[scale=.2]{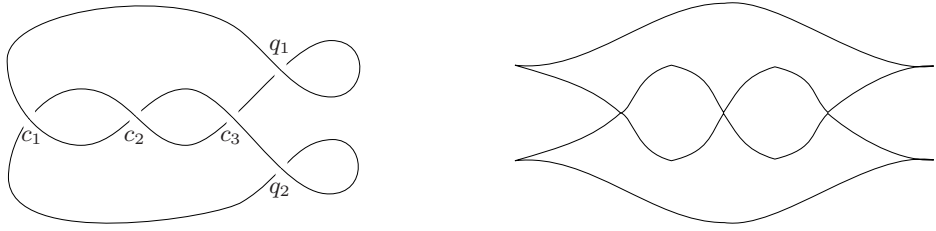}
  \caption{The left figure gives a Lagrangian projection of a
    Legendrian right handed trefoil with crossings labeled and the
    right figure gives a front projection.}
  \label{fig:trefoil}
\end{figure}

Note that one can recover the $y$ coordinate of a knot from the slope
of the front diagram (see \cite{EtnyreLegendrianTrans}):
\[y=\frac{dz}{dx}.\] This implies that lines tangent to a front
diagram of a Legendrian knot are never vertical. Front diagrams
instead have semicubical cusps. It also implies that at a double point
the strand with the smaller (more negative) slope has a smaller $y$
coordinate and so passes in front of the strand with larger (more
positive) slope. For a front diagram of an oriented Legendrian knot,
the rotation number is half of the difference between the number of
downward-pointing cusps and the number of upward-pointing cusps.

In particular, we will find that front diagrams in plat position will
be easier to manipulate. A front diagram is in {\bf plat position} if
all of the left cusps have the same $x$ coordinate, all of the right
cusps have the same $x$ coordinate, and there do not exist crossings
in the diagram which have the same $x$ coordinate. One can use
Legendrian versions of the Reidemeister II moves and planar isotopy to
put any front diagram into plat position. The diagram of the trefoil
given in Figure \ref{fig:trefoil} is an example of a diagram in plat
position.

\subsection{Definition of the DGA and augmentations}
This section contains a brief overview of the differential graded
algebra presented by Etnyre, Ng, Sabloff in \cite{EtnyreInvariants}
which lifts the Chekanov-Eliashberg differential graded algebra over
$\integers/2$ in \cite{Chekanov} to a DGA over $\integers[t,t^{-1}]$.

Given a front diagram of an oriented Legendrian knot $\Lambda$ in plat
position in $\reals^3$ with the standard contact structure, Ng's
resolution process \cite{NgComputable} gives a Lagrangian diagram for
a knot Legendrian isotopic to $\Lambda$ by smoothing left cusps,
replacing right cusps with a loop, and resolving crossings so that the
over crossing strand has smaller (more negative) slope.

\begin{note}
  Label the crossings of the Lagrangian resolution of a front diagram
  of $\Lambda$ in plat position by $\{c_1,\ldots,c_n,q_1,\ldots,q_m\}$
  with $q_1,\ldots,q_m$ the crossings from resolving the right cusps
  labeled from the top to the bottom and $c_1,\ldots,c_n$ the
  remaining crossings labeled from left to right (see Figure
  \ref{fig:trefoilDGA}). Label each quadrant around a crossing as shown
  in Figure \ref{fig:reebSigns}. We will refer to these labels as the
  {\bf Reeb signs} and will call a quadrant at a crossing {\bf
    positive} or {\bf negative} depending on its Reeb sign.
\end{note}

\begin{figure}
  \labellist
  \small\hair 2pt 
  \pinlabel $-$ [b] at 44 53 
  \pinlabel $-$ [t] at 44 39 
  \pinlabel $+$ [l] at 50 45 
  \pinlabel $+$ [r] at 35 45
  \endlabellist
  \includegraphics[scale=.9]{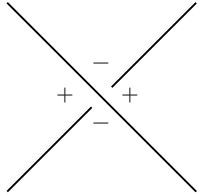}
  \caption{A labeling of the Reeb signs of the quadrants around a
    crossing.}
  \label{fig:reebSigns}
\end{figure}

\begin{defn}
  Let $\Lambda$ be an oriented Legendrian knot in plat position
  decorated with $*$ for the base point. The algebra
  $\calA_R(c_1,\ldots,c_n,q_1,\ldots,q_m)$ is the noncommutative
  graded free associative unital algebra over $R=\integers[t,t^{-1}]$
  generated (as an algebra) by $\{c_1,\ldots,c_n,q_1,\ldots,q_m\}$. We
  will sometimes shorten this to $\calA_R$.

  The grading for $t$ is defined to be $-2r(\Lambda)$. To give $c_i$ a
  grading, we first must specify a capping path $\gamma_{c_i}$. The
  {\bf capping path} $\gamma_{c_i}$ is the unique path in $\Lambda$
  which begins at the under crossing of $c_i$, ends at the over
  crossing of $c_i$, and does not go through the base point $*$ (note
  that this may mean the capping path has the opposite orientation of
  the knot), as seen in Figure \ref{fig:cappingPath}.
\end{defn}

\begin{figure}
  \labellist
  \small
  \pinlabel $+$ [r] at 60 34 
  \pinlabel $+$ [l] at 68 34
  \pinlabel $-$ [b] at 64 38 
  \pinlabel $-$ [t] at 64 30
  \endlabellist

  \includegraphics[width=1.5in]{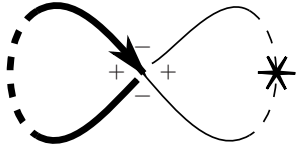}
  \caption{The choice of capping path for a crossing, where the
    capping path is denoted by a heavy line and the arrows give the
    orientation of the capping path. The signs are the Reeb signs.}
  \label{fig:cappingPath}
\end{figure}

Define the rotation number $r(\gamma_{c_i})$ to be the fractional
number of counterclockwise revolutions made by the tangent vector to
$\gamma_{c_i}$ as we follow the path. One can perturb the diagram of
$\Lambda$ so that all crossings are orthogonal and thus
$r(\gamma_{c_i})$ is an odd multiple of $1/4$. Define the grading on
$c_i$ by
\[\lvert c_i\rvert=-2r(\gamma_{c_i})-\frac12.\]
(Note that by setting $t=1$ we recover Chekanov's grading from
\cite{Chekanov}, though we then need to consider the grading mod
$2r(\Lambda)$.)

Since we are working with front projections of knots in plat position,
we can assign the gradings mod $2r(\Lambda)$ of crossings at right
cusps: $\lvert q_k\rvert=1$. Let $C(\Lambda)$ be the set of points
on $\Lambda$ corresponding to cusps of the front projection of
$\Lambda$. A {\bf Maslov potential function} is a locally constant
function
\[\mu:\Lambda\backslash C(\Lambda)\to\integers/2r(\Lambda)\]
such that for two strands meeting at a cusp (either left or right),
the upper strand has Maslov potential one higher than the lower
strand. Such a function is well-defined up to a constant. Near a
crossing $c_k$, let $\alpha_k$ be the strand in the front diagram with
more negative slope and let $\beta_k$ be the strand with more positive
slope. The grading defined earlier now becomes
\[\lvert c_k\rvert\equiv\mu(\alpha_k)-\mu(\beta_k)\mod2r(\Lambda).\]

Label a point on the diagram $*$. This will be the base point
corresponding to $t$. In \S \ref{sec:basepoints} we will discuss the
case when we have multiple base points. We define the differential
$\partial$ on $\calA_R(c_1,\ldots,c_n,q_1,\ldots,q_m)$ by
appropriately counting embedded disks in the Lagrangian resolution of
the front projection of $\Lambda$ in plat position. (Note that, in
general, one would need to look for immersed disks, but since
$\Lambda$ is in plat position, we need only look for embedded disks.)

Given a generator $a$ and an ordered set of generators
$\{b_1,\ldots,b_k\}$, let $\Delta(a;\{b_1,\ldots,b_k\})$ be the set of
orientation-preserving embeddings
\[f:D^2\to\reals^2\] (up to smooth reparametrization) that map
$\partial D^2$ to the Lagrangian resolution of $\pi_f(\Lambda)$, such
that
\begin{enumerate}
\item the restriction of $f$ to $\partial D^2$ is an embedding except
  at $a,b_1,\ldots,b_k$,
\item $a,b_1,\ldots,b_k$ are encountered in counter-clockwise order
  along $f(\partial D^2)$,
\item near $a,b_1,\ldots,b_k$, $f(D^2)$ covers exactly one quadrant,
  specifically, a quadrant with positive Reeb sign near $a$ and a
  quadrant with negative Reeb sign near $b_i$ for $1\leq i\leq k$.
\end{enumerate}

We can assign a word in $\calA$ to each embedded disk by starting with
the first corner after the one covering the $+$ quadrant and listing
the crossing labels of all negative corners as encountered while
following the boundary of the immersed polygon counter-clockwise. We
associate a sign to each immersed disk by associating an {\bf
  orientation sign} $\epsilon_{Q,a}$ to each quadrant $Q$ in the
neighborhood of a crossing $a$, determined by Figure
\ref{fig:orientation}, and defining the sign of a disk $f(D^2)$, the
product of the orientation signs over all the corners of the disk,
denoted $\epsilon(f(D^2))$. Since we are working with a diagram in
plat position, in practice, we can define $\epsilon(a;b_1\cdots b_k)$
to be the sign of the unique disk with positive corner at $a$ (with
respect to Reeb signs) and negative corners at $b_1,\ldots,b_k$, the
product of the orientation signs over all corners of the disk. Note
that our convention for assigning orientation signs differs from
\cite{EtnyreInvariants}. At any crossing $c$ where our convention
differs from that in \cite{EtnyreInvariants}, one can recover the
convention in \cite{EtnyreInvariants} by sending $c$ to $-c$.

\begin{figure}
  \labellist
  \small \hair 3pt
  \pinlabel $+$ [b] at 56 57
  \pinlabel $+$ [t] at 56 57
  \pinlabel $-$ [l] at 56 57
  \pinlabel $-$ [r] at 56 57

  \pinlabel $+$ [br] at 248 55
  \pinlabel $+$ [tl] at 248 55
  \pinlabel $-$ [bl] at 248 55
  \pinlabel $-$ [tr] at 248 55

  \pinlabel $+$ [l] at 447 54
  \pinlabel $+$ [r] at 447 54
  \pinlabel $-$ [t] at 447 54
  \pinlabel $-$ [b] at 447 54
  \endlabellist

  \includegraphics[width=4in]{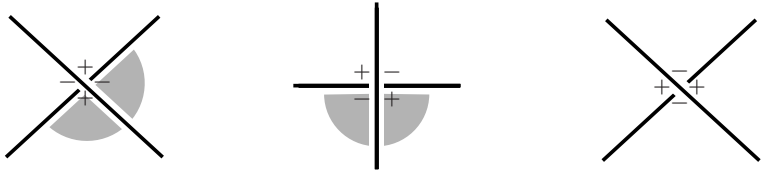}
  \caption{The left two diagrams are positive crossings while the
    right one is a negative crossing. The signs in the figure are Reeb
    signs. The orientation signs are $-1$ for the shaded quadrants and
    $+1$ everywhere else. The middle diagram gives the orientation
    assignment for a positive crossing in a dip, which will be
    discussed in \S \ref{sec:dips}.}
  \label{fig:orientation}
\end{figure}

Define $n_*(a;b_1,\ldots,b_k)$ to be the signed count of the number of
times one encounters the base point $*$ while following $f(\partial
D^2)$ in the counter-clockwise direction, where the sign is determined
by whether one encounters the base point while following the
orientation of the knot or going against the orientation of the knot.

\begin{defn} 
  The algebra $\calA_R$ is a differential graded algebra
  (DGA) whose differential $\partial$ is defined as follows:
  \[\partial a=\sum_{b_1,\ldots,b_k}\epsilon(a;b_1\cdots
  b_k)t^{n_*(a;b_1,\ldots,b_k)}b_1\cdots b_k.\] Extend $\partial$ to
  $\calA_R$ via $\partial(\integers[t,t^{-1}])=0$ and the signed
  Leibniz rule:
  \[\partial(vw)=(\partial v)w+(-1)^{\lvert v\rvert}v(\partial w).\]
\end{defn}

From Theorem 3.7 in \cite{EtnyreInvariants}, the differential
$\partial$ has degree $-1$ and satisfies $\partial^2=0$.

\begin{figure}
  \labellist
  \small
  \pinlabel $c_1$ [b] at 43 220
  \pinlabel $c_2$ [b] at 256 222
  \pinlabel $c_3$ [b] at 449 222
  \pinlabel $q_1$ [b] at 544 318
  \pinlabel $q_2$ [b] at 544 122
  \endlabellist
  \includegraphics[width=2in]{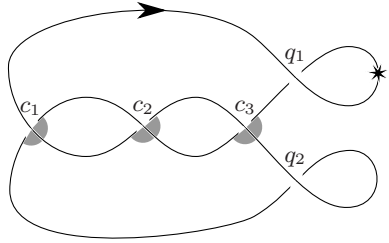}
  \caption{The Lagrangian resolution of the front diagram of the right
    trefoil in plat position. Crossings are labeled and $*$ indicates
    the base point corresponding to $t$. The shaded regions are
    quadrants with orientation sign $-1$. All other quadrants have
    orientation sign $+1$.}
  \label{fig:trefoilDGA}
\end{figure}

For example, the right handed trefoil depicted in Figure \ref{fig:trefoilDGA}
with $r=0$ and $tb=1$ has $\lvert c_i\rvert=0$ and $\lvert
q_i\rvert=1$. We have $\calA_R=\calA_R(c_1,c_2,c_3,q_1,q_2)$ with
differential
\begin{align*}
  \partial c_1&=\partial c_2=\partial c_3=0\\
  \partial q_1&=t+c_1+c_3+c_1c_2c_3\\
  \partial q_2&=1-c_1-c_3-c_3c_2c_1.
\end{align*}

\begin{defn} 
  A graded chain isomorphism
  \[\phi:\A(a_1,\ldots,a_n)\to\A(b_1,\ldots,b_n)\]
  is {\bf elementary} if there exists $j\in\{1,\ldots,n\}$ such that
  \[\phi(a_i)=\begin{cases}
    b_i&i\neq j\\
    ub_j+v&v\in\A(b_1,\ldots,b_{j-1},b_{j+1},\ldots,b_n), u\text{ a
      unit in }R,i=j.
  \end{cases}\] A composition of elementary isomorphisms is called
  {\bf tame}.
\end{defn}

\begin{defn} 
  Define the algebra $\calE_i=\calA(e_1^i,e_2^i)$ by
  setting $\lvert e_1^i\rvert=i-1$, $\lvert e_2^i\rvert=i$, $\partial
  e_2^i=e_1^i$, and $\partial e_1^i=0$.
\end{defn}
 
This algebra models the second Reidemeister move, which produces two
new crossings.

\begin{defn} Given a DGA $(\calA(a_1,\ldots,a_n),\partial)$, the {\bf
    degree $i$ stabilization} of $(\calA(a_1,\ldots,a_n),\partial)$ is
  defined to be $\calA(a_1,\ldots,a_n,e_1^i,e_2^i)$. The grading and
  the differential are inherited from $\calA$ and $\calE_i$. Two DGA's
  $(\calA,\partial)$ and $(\calA',\partial')$ are {\bf stable tame
    isomorphic} if there exist two sequences of stabilizations
  $S_{i_1},\ldots,S_{i_n}$ and $S_{j_1},\ldots,S_{j_m}$ and a tame
  isomorphism
  \[\phi:S_{i_n}(\cdots(S_{i_1}(\calA))\cdots)\to
  S_{j_m}(\cdots(S_{j_1}(\calA'))\cdots),\] which is also a chain map.
\end{defn}

In fact, the stable tame isomorphism class of the DGA is invariant
under Legendrian isotopy. Chekanov proved this result over
$\integers/2$ in \cite{Chekanov} and Etnyre, Ng, and Sabloff proved
this result over $\integers[t,t^{-1}]$ in \cite{EtnyreInvariants}.

\vspace{.1in} Now that we have the DGA associated with the projection
of $\Lambda$, we can discuss the augmentations.

\begin{defn}
  Let $F$ be a field. An {\bf augmentation} of $(\A,\partial)$ to $F$
  is an algebra map $\epsilon:\A\to F$ such that
  $\epsilon\circ\partial=0$ and $\epsilon(1)=1$. If
  $\rho\vert2r(\Lambda)$ and $\epsilon$ is supported on generators of
  degree divisible by $\rho$, then $\epsilon$ is {\bf
    $\rho$-graded}. In particular, if $\rho=0$, we say it is {\bf
    graded} and if $\rho=1$, we say it is {\bf ungraded.} We call a
  generator $a$ {\bf augmented} if $\epsilon(a)\neq0$.
\end{defn}

For example, if we recall the DGA over $\integers[t,t^{-1}]$ for the
right handed trefoil, then we can classify the augmentations to any
field $F$ as follows: Let $\epsilon:\calA_R\to F$ be an
augmentation. Then $\epsilon(t)=-1$ and
\begin{itemize}
\item if $\epsilon(c_1)=0$, then $\epsilon(c_3)=1$ and
  $\epsilon(c_2)\in F$
\item if $\epsilon(c_3)=0$, then $\epsilon(c_1)=1$ and
  $\epsilon(c_2)\in F$
\item if $\epsilon(c_1),\epsilon(c_3)\neq0$, then
  \[\epsilon(c_2)=(1-\epsilon(c_1)-\epsilon(c_3))(\epsilon(c_1))^{-1}(\epsilon(c_3))^{-1}.\]
\end{itemize}
Note that if $F$ is a finite field, as in \cite{Henry}, and $\lvert
F\rvert$ is the number of elements in $F$, then we see that there are
$\lvert F\rvert$ augmentations of the first type, $\lvert F\rvert$
augmentations of the second type, and $\lvert F^*\rvert^2$
augmentations of the third type, where $F^*=F\backslash\{0\}$. In
fact,
\begin{equation}\label{eqn:augHenry}
  \{(\epsilon(c_1),\epsilon(c_2),\epsilon(c_3),\epsilon(q_1),\epsilon(q_2),\epsilon(t)):\epsilon\text{
    an augmentation to }F\}=F\coprod F\coprod(F^*)^2.
\end{equation}
In \cite{Henry}, this is called the augmentation variety of
$(\calA(\Lambda),\partial)$. Comparing this with possible rulings of
the trefoil, definition given in \S\ref{sec:rulings}, one sees that
\eqref{eqn:augHenry} coincides with Theorem 3.4 of \cite{Henry}.

For example, the following are examples of graded augmentations to
$\reals$.
\[\begin{array}{c|cccccc}
  &c_1&c_2&c_3&q_1&q_2&t\\
  \hline \epsilon_1&1&\frac12&0&0&0&-1\\
  \epsilon_2&0&\frac12&1&0&0&-1\\
  \epsilon_3&2&\frac34&-\frac25&0&0&-1\\
  \epsilon_4&-\frac25&\frac34&2&0&0&-1\\
  \epsilon_5&\frac12&0&\frac12&0&0&-1
\end{array}\]

Note that any augmentation of a stabilization $S(\calA)$ restricts to
an augmentation of the smaller algebra $\calA$ and any augmentation of
the algebra $\calA$ extends to an augmentation of the stabilization
$S(\calA)$ where the augmentation sends $e^i_1$ to $0$ and $e^i_2$ to
an arbitrary element of $F$ if $\rho\vert i$ and $0$ otherwise.

\subsection{Rulings}\label{sec:rulings}
This paper will show that there is a way to construct an augmentation
from a normal ruling and a normal ruling from an augmentation. 

\begin{defn}
  Consider a front diagram in plat position of a Legendrian knot
  $\Lambda$. A {\bf ruling} of this diagram consists of a one-to-one
  correspondence between the set of left cusps and the set of right
  cusps where, for each pair of corresponding cusps, two paths in the
  front diagram join them. These {\bf ruling paths} must satisfy the
  following:
  \begin{enumerate}
  \item Any two paths in the ruling only meet at crossings or
    cusps;\label{cond:1}
  \item The interiors of the two paths joining corresponding cusps are
    disjoint. Thus each pair of paths bound a topological
    disk. \label{cond:2}
  \end{enumerate}
\end{defn}

The first condition tells us the ruling paths never overlap at more
than a finite number of points. The second condition tells us that
there are disks similar to those in the differential $\partial$, but
possibly with ``obtuse'' corners. As noted in \cite{FuchsAug},
these imply that the ruling paths cover the front diagram and the
$x$-coordinate of each path in the ruling is monotonic.

Near a crossing, the two ruling paths which intersect at the crossing
are called {\bf crossing paths}. The two paths paired with the
crossing paths are called {\bf companion paths}.

Given a ruling, at any crossing, we either have that the crossing
paths pass through each other, or one path lies entirely above (has
$z$-coordinate strictly greater than) the other. In the latter case,
we say the ruling is {\bf switched} at the crossing. If all of the
switched crossings in the ruling are of the form (a), (b), or (c), as
seen in Figure \ref{fig:config} then we say the ruling is {\bf
  normal}. Thus, the possible configurations near a crossing in a
normal ruling are shown in Figure \ref{fig:config}.

\begin{figure}
  \labellist
  \small\hair 2pt
  \pinlabel $(a)$ [t] at 82 267
  \pinlabel $(b)$ [t] at 339 267
  \pinlabel $(c)$ [t] at 593 267
  \pinlabel $(d)$ [t] at 82 -20
  \pinlabel $(e)$ [t] at 339 -20
  \pinlabel $(f)$ [t] at 593 -20
  \endlabellist
  \includegraphics[width=4.5in]{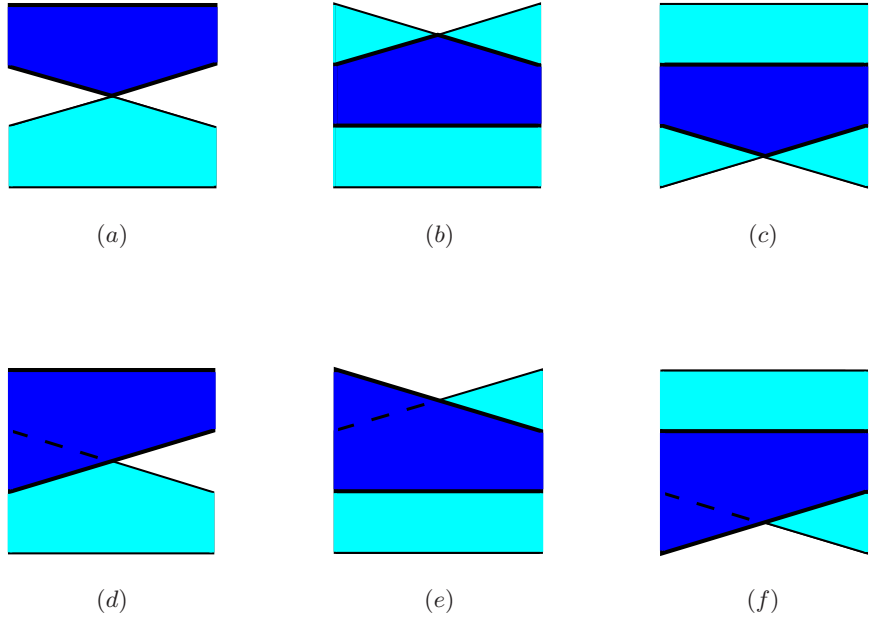}
  \vspace{.25in}
  \caption{By including vertical reflections of (d), (e), and (f),
    these are all possible configurations of crossings appearing in a
    normal ruling. The top row contains all possible configurations
    for switched crossings in a normal ruling.}
  \label{fig:config}
\end{figure}
\vspace{.5in}

\noindent If all of the switched crossings have grading divisible by
$\rho$ for some $\rho$ such that $\rho\vert 2r(\Lambda)$, then we say
the ruling is {\bf $\rho$-graded}. In particular, if $\rho=0$, then we
say the ruling is {\bf graded} and if $\rho=1$, then we say the ruling
is {\bf ungraded}.

For example, if $F=\integers/2\integers$, the trefoil has three graded
normal rulings as seen in Figure \ref{fig:trefoilRulings}.

\begin{figure}
  \includegraphics[width=5.5in]{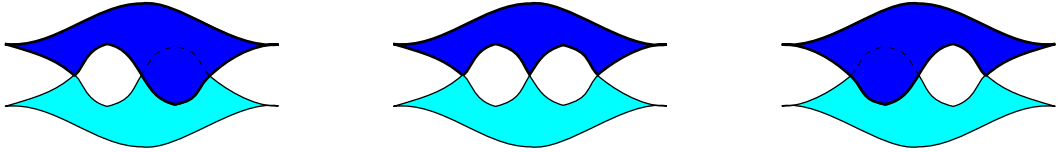}
  \caption{The graded normal rulings of the right handed trefoil.}
  \label{fig:trefoilRulings}
\end{figure}

In \cite{ChekanovFronts}, Chekanov showed that the number of
$\rho$-graded normal rulings is invariant under Legendrian isotopy.

\subsection{Dips}\label{sec:dips}
We will construct a normal ruling of the diagram by using the
augmentation to construct an augmentation $\epsilon$ of the dipped
diagram satisfying Property (R), as called in
\cite{SabloffAug}. However, the notation in the following section will
be necessary to write down Property (R).

Given a Legendrian knot $\Lambda$ in plat position, we construct a
{\bf dip} between two crossings by a sequence of Reidemeister II
moves, as seen in Figure \ref{fig:dips} in the front projection and
Lagrangian projection. In the front projection, it is clear that the
diagram with the dip is isotopic to the original diagram. To construct
a dip, number the $2m$ strands from bottom to top. Using a type II
Reidemeister move, push strand $2$ over strand $1$, then strand $3$
over strand $1$, then strand $3$ over strand $2$, and so on. So that
strand $k$ is pushed over strand $\ell$ in lexicographic order. If
strand $k$ crosses strand $\ell$ after strand $i$ crosses strand $j$,
we write $(i,j)<(k,\ell)$.

The {\bf dipped diagram} involves introducing a dip between each
crossing in the plat position diagram and between the left,
respectively right, cusps and the first, respectively last, crossing
(see Figure \ref{fig:trefoilEx}). Each Reidemeister II move introduces
two new variables. For the dip immediately after crossing $c_k$, we
will use $a^k_{rs}$ and $b^k_{rs}$ to denote the new crossings
introduced when strand $r$ is passed over strand $s$ ($r>s$), with
$b^k_{rs}$ being the leftmost and $a^k_{rs}$ being the rightmost new
crossing (see Figure \ref{fig:dips}). We will say the $b^k_{rs}$
generators belong to the {\bf $b^k$-lattice} and the $a^k_{rs}$ belong
to the {\bf $a^k$-lattice}. Thus we will have $a^k/b^k$-lattices for
$0\leq k\leq n$. While dipped diagrams have many more crossings than
the original knot diagram, the differential $\partial$ on $\calA_R$ is
generally much simpler. We note that if $\mu$ is a Maslov potential
function on the front diagram, then
\[\lvert b^k_{rs}\rvert=\mu(r)-\mu(s).\]
Since the differential $\partial$ lowers degree by one,
\[\lvert a^k_{rs}\rvert=\lvert b^k_{rs}\rvert-1.\]

\begin{figure}
  \labellist
  \small\hair 1pt
  \pinlabel $1$ [r] at 320 99
  \pinlabel $2$ [r] at 320 132
  \pinlabel $3$ [r] at 320 164
  \pinlabel $4$ [r] at 320 198

  \pinlabel $b_{41}$ [tr] at 425 99
  \pinlabel $b_{42}$ [tr] at 425 67
  \pinlabel $b_{43}$ [tr] at 425 35
  \pinlabel $b_{31}$ [tr] at 393 99
  \pinlabel $b_{32}$ [tr] at 393 67
  \pinlabel $b_{21}$ [tr] at 362 99

  \pinlabel $a_{41}$ [tl] at 472 99
  \pinlabel $a_{42}$ [tl] at 472 67
  \pinlabel $a_{43}$ [tl] at 472 35
  \pinlabel $a_{31}$ [tl] at 505 99
  \pinlabel $a_{32}$ [tl] at 505 67
  \pinlabel $a_{21}$ [tl] at 537 99

  \endlabellist

  \includegraphics[width=4in]{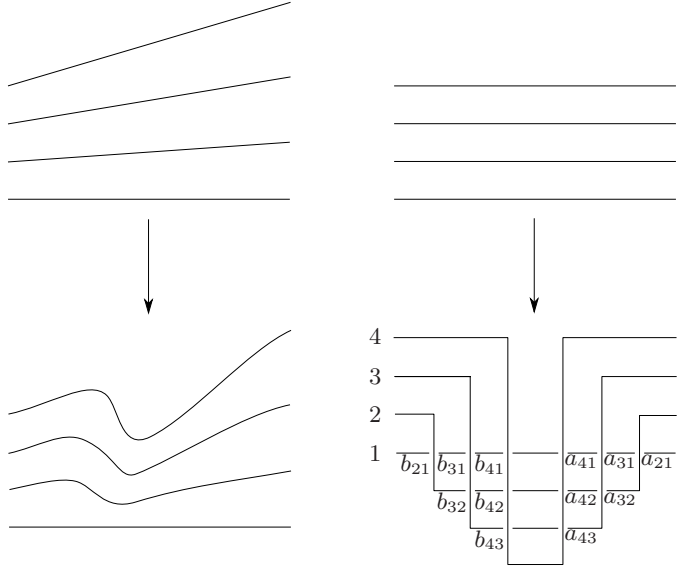}
  \caption{ The left diagram gives the modification of the front
    projection when creating a dip, while the right diagram gives the
    modification of the Lagrangian projection. In the Lagrangian
    projection, the $b^k$-lattice is made up of the crossings on the
    left and the $a^k$-lattice is made up of the crossings on the
    right. The crossings in the $b^k$-lattice are labeled down and to
    the left, while the crossings in the $a^k$-lattice to the right, with
    $k$'s suppressed.}
  \label{fig:dips}
\end{figure}

Orientation sign assignments are given in Figure
\ref{fig:orientation}.  We can reduce possible disks, and thus
possible terms in the differential, further in certain cases. As the
disks in the computation of $\Az$ are the same disks in the
computation of $\A$, we have the following lemma from
\cite{SabloffAug}.

\begin{lem}[\cite{SabloffAug} Lemma 3.1]\label{lem:3.1equiv}
  If $a$ and $b$ are the new crossings created by a type II move
  during the creation of a dip and $y$ is any other crossing, then $a$
  appears at most once in any term of $\partial y$, and if $a$ appears
  in any term of $\partial y$, then $b$ does not.
\end{lem}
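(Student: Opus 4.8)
The plan is to argue geometrically, exploiting the embeddedness of the disks counted by $\partial$ together with the very thin, special shape of the region produced by a single type II move. Write $a=a^k_{rs}$ and $b=b^k_{rs}$, and let $B$ be the bigon region of the Lagrangian diagram bounded by the two short arcs running between $a$ and $b$; by the way the dip is built (each type II move being performed in a small neighborhood) the interiors of these two arcs contain no crossings and $\operatorname{int}(B)$ meets no part of the diagram. The bigon $B$ is itself one of the embedded disks counted by $\partial$, and since $\lvert a\rvert=\lvert b\rvert-1$ and $\partial$ lowers degree by one, it must contribute a term $\pm t^j a$ to $\partial b$ (compare the model algebra $\calE_i$). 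In particular the quadrant of $B$ at $b$ has positive Reeb sign and the quadrant of $B$ at $a$ has negative Reeb sign, so, as Reeb signs alternate around a crossing, at $a$ the two negative quadrants are the bigon quadrant and the quadrant opposite it, while the two remaining (``mixed'') quadrants are positive; symmetrically, at $b$ the bigon quadrant and its opposite are positive and the two mixed quadrants are negative.

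Next I would record the dichotomy on which the whole argument turns. Let $D=f(D^2)$ be any disk counted in $\partial y$; it is an embedded disk with $\partial D$ on the diagram and interior in its complement, it has a positive corner at $y$, and all of its other corners are negative. Since $\operatorname{int}(B)$ is connected and disjoint from $\partial D$, it lies entirely inside $D$ or entirely outside $D$: either $D\supseteq\overline{B}$ (``$D$ covers $B$'') or $\operatorname{int}(D)\cap\operatorname{int}(B)=\emptyset$ (``$D$ avoids $B$''). Moreover, because $y\neq a,b$, every corner of $D$ at $a$ or at $b$ is a negative corner, hence by the previous paragraph it lies in the bigon quadrant or the quadrant opposite it at $a$, and in one of the two mixed quadrants at $b$.

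To prove that $a$ occurs at most once in a term of $\partial y$, I would first rule out a corner of $D$ at $a$ in the bigon quadrant: such a corner puts $\operatorname{int}(B)$ into $\operatorname{int}(D)$, so $D$ covers $B$, and following $\partial D$ out of that corner along the two arcs bounding $B$ --- which carry no crossings in their interiors --- one is forced to conclude that $\partial D$ reaches $b$ along both arcs while staying on the $B$-side, so $D$ has a corner at $b$ in the bigon quadrant there; but that quadrant is positive and $b\neq y$, a contradiction. Hence every corner of $D$ at $a$ lies in the single remaining negative (anti-bigon) quadrant, and since $D$ is embedded there is at most one such corner. For the statement that no term of $\partial y$ contains both $a$ and $b$, suppose $D$ had corners at both. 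Its corner at $b$ lies in a mixed quadrant, so $\partial D$ leaves that corner along one of the arcs bounding $B$ toward $a$; since that arc has no interior crossings, $\partial D$ reaches $a$ along it on the side of the arc away from $B$, and following $\partial D$ around $a$ while respecting embeddedness forces $D$ to occupy a mixed quadrant at $a$, which is positive and not $y$ --- again a contradiction. Thus no term of $\partial y$ contains $a$ twice or contains both $a$ and $b$.

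The real work is concealed in the phrase ``following $\partial D$ around the crossing while respecting embeddedness''. To make these steps precise one must enumerate the possible local pictures of an embedded disk at a double point --- a single convex wedge, a strand traversed straight through, an obtuse corner, or an interior point --- track which quadrants each picture occupies and how it is forced to propagate along the crossing-free arcs bounding $B$, and verify that embeddedness excludes every combination except the ones producing the contradictions above. That bookkeeping is the only genuine obstacle; the one other thing needing justification is the structural claim about the dip used at the outset --- that $\operatorname{int}(B)$ meets nothing and that the two arcs bounding $B$ have crossing-free interiors --- which holds because each type II move can be carried out in an arbitrarily small neighborhood.
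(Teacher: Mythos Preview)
Your argument is correct and follows essentially the same approach as the paper's, though you supply considerably more detail. The paper's own proof is a one-line reference to a figure (and to Sabloff's original lemma): it simply displays the two local shapes a disk with a negative corner at $a$ can take and leaves the reader to observe that neither contains a second corner at $a$ nor any corner at $b$. Your argument is the written-out justification of exactly that picture --- identifying the Reeb signs of the four quadrants at $a$ and at $b$ via the bigon $B$, using embeddedness and the crossing-free arcs of $B$ to force any negative corner at $a$ into the anti-bigon quadrant, and then tracing $\partial D$ from a hypothetical corner at $b$ along a bigon arc to derive a positive (hence forbidden) corner at $a$. The one point worth making explicit is that your argument really uses that $f$ is an \emph{embedding} (so each crossing has at most one preimage on $\partial D^2$); this is exactly the plat/dip hypothesis the paper invokes, and it is what lets you conclude ``at most one corner at $a$'' and what rules out $\partial D$ passing straight through $a$ separately from the assumed corner there.
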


This follows from considering the disks which have a negative corner
at $a$ as seen in Figure \ref{fig:disksA}.

\begin{figure}
  \labellist
  \small\hair 2pt
  \pinlabel $b$ [bl] at 100 64
  \pinlabel $a$ [br] at 147 64

  \pinlabel $b$ [bl] at 528 64
  \pinlabel $a$ [br] at 575 64

  \tiny
  \pinlabel $-$ [bl] at 147 64
  \pinlabel $+$ [tl] at 147 97
  \pinlabel $-$ [tr] at 212 97
  \pinlabel $-$ [bl] at 575 65
  \endlabellist
  \includegraphics[width=5in]{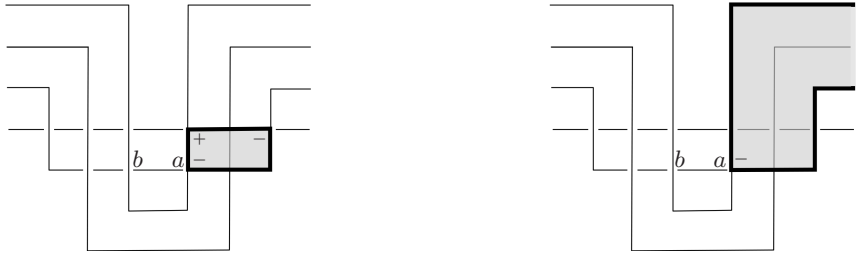}
  \caption{Possible disks contributing to $\partial$ with a negative
    corner at $a$.}
  \label{fig:disksA}
\end{figure}

Through consideration of the dipped diagram, we see
\begin{itemize}
\item the differential of crossings in the $b^k$-lattice involve at
  most
  \begin{itemize}
  \item $c_k$,
  \item base points (we will discuss the case when we have more than
    one in the next section),
  \item crossings in the $a^{k-1}$-lattice,
  \item crossings in the $a^k$-lattice,
  \end{itemize}
\item the differential of crossings in the $a^k$-lattice only involve
  \begin{itemize}
  \item base points,
  \item crossings in the $b^k$-lattice,
  \end{itemize}
\item the differential of $c_k$ is $0$
\end{itemize}
for all $1\leq k\leq n$. This greatly reduces the types of totally
augmented disks for which to look to compute whether we have an
augmentation, where a {\bf totally augmented disk} is a disk which
contributes to the differential, all of whose negative corners are
augmented.

\bigno \note $a^k_{\{r,s\}}=a^k_{\max(r,s),\min(r,s)}$

\subsection{Augmentations before and after a base point  move} 
\label{sec:basepoints} 
As we create dips, we will find that the signs are simpler if, in
certain cases, we add in a few extra base points. In
\cite{NgSatellites}, Ng and Rutherford give the DGA isomorphisms
induced by adding a base point and by moving one base point around a
knot. First, we need to extend our definition of the DGA over
$\integers[t,t^{-1}]$ to a DGA over
$\integers[t_1^{\pm1},\ldots,t_s^{\pm1}]$, which we will call
$\calA(\Lambda,*)$. To this end, label $s$ points on the Lagrangian
resolution of the front diagram of $\Lambda$ by the base points
$*_1,\ldots,*_s$ respectively associated to $t_1,\ldots,t_s$.

\begin{defn}
  The algebra $\calA$ is a DGA whose grading is defined analogously to
  the case when there is only one base point: We define $\lvert
  t_1\rvert=-2r(\Lambda)$ and $\lvert t_i\rvert=0$ for $1<i\leq
  s$. Given a crossing $c$, let $\gamma_c$ be the unique path
  following the under strand of $c$ to the over strand of $c$ while
  avoiding $*_1$ and define $\lvert
  c\rvert=-2r(\gamma_c)-\frac12$. The differential $\partial$ is
  defined as follows:
  \[\partial a=\sum_{b_1,\ldots,b_k}\epsilon(a;b_1\cdots
  b_k)t_1^{n_{*_1}(a;b_1,\ldots,b_k)}\cdots
  t_s^{n_{*_s}(a;b_1,\ldots,b_k)}b_1\cdots b_k.\] Extend $\partial$ to
  $\calA$ via $\partial(\integers[t,t^{-1}])=0$ and the signed Leibniz
  rule:
  \[\partial(vw)=(\partial v)w+(-1)^{\lvert v\rvert}v(\partial w).\]
\end{defn}

\begin{thm}[\cite{NgSatellites} Thm 2.19]\label{thm:2.19equiv}
  The map $\partial:\calA(\Lambda,*)\to\calA(\Lambda,*)$ lowers degree
  by 1 and is a differential: $\partial^2=0$. Up to stable tame
  isomorphism, the differential graded algebra
  $(\calA(\Lambda,*),\partial)$ is an invariant of $\Lambda$ under
  Legendrian isotopy (and choice of base point).
\end{thm}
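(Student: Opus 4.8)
The plan is to follow the template of Chekanov \cite{Chekanov} and Etnyre--Ng--Sabloff \cite{EtnyreInvariants} for the single base point case, treating the additional base points $*_2,\ldots,*_s$ as bookkeeping devices that only contribute multiplicatively to the coefficients. For the degree statement, observe that each monomial $t_1^{n_{*_1}}\cdots t_s^{n_{*_s}}b_1\cdots b_k$ appearing in $\partial a$ comes from a disk, and since $\lvert t_i\rvert=0$ for $i>1$ while $\lvert t_1\rvert=-2r(\Lambda)$, the grading contribution of the extra base point factors vanishes; the capping paths defining $\lvert c\rvert$ still avoid only $*_1$, so the grading of each generator and of each monomial is the same computation as in the single base point case, where $\partial$ is known to lower degree by $1$.

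For $\partial^2=0$, I would run the standard broken-disk argument: the terms of $\partial^2 a$ come in cancelling pairs arising from the two endpoints of a one-parameter family of disks (equivalently, from gluing pairs of disks along a boundary arc on the knot). The one new point to check is that $n_{*_i}$ is additive under gluing, i.e. $n_{*_i}(D\# D')=n_{*_i}(D)+n_{*_i}(D')$, which is immediate since the signed base-point counts are winding-type quantities; hence the $t$-monomial of a glued configuration is the product of the two $t$-monomials, matching exactly what occurs in $\partial\circ\partial$. The sign analysis of \cite{EtnyreInvariants} is unaffected because the orientation signs $\epsilon_{Q,a}$ of Figure \ref{fig:orientation} never involve base points.

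For invariance, I would decompose a Legendrian isotopy into Legendrian Reidemeister moves, and before performing each move first apply base-point moves to slide all of $*_1,\ldots,*_s$ away from the region supporting the move; then the move takes place in a base-point-free region and the stable tame isomorphism of \cite{EtnyreInvariants} applies verbatim, carrying the monomial coefficients along unchanged, and one checks it is a chain map over $\integers[t_1^{\pm1},\ldots,t_s^{\pm1}]$. It then remains to show that moving a single base point $*_i$ across a crossing or across a cusp induces a DGA isomorphism: moving $*_i$ across a crossing $c$ along the knot is realized by the algebra automorphism sending $c\mapsto t_i^{\pm1}c$ (or $c\mapsto c\,t_i^{\mp1}$, according to which strand carries $*_i$ and the orientation) and fixing the other generators, and one verifies directly that this commutes with $\partial$, since its effect on each disk having a corner at $c$ is precisely to shift the $n_{*_i}$-count by $\pm1$; moving $*_i$ across a cusp is handled analogously using the loop at a resolved right cusp, respectively the smoothed left cusp.

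The main obstacle is the sign bookkeeping in this last step: one must reconcile the orientation-sign conventions of Figure \ref{fig:orientation} with the multi-variable exponents when $*_i$ crosses a cusp, using the ``against orientation'' sign convention in the definition of $n_{*_i}$, and confirm that the resulting map is a genuine isomorphism of differential graded algebras rather than merely a chain map up to sign. A secondary subtlety is arranging, in the invariance step, that the base-point-sliding preprocessing can always be performed while keeping the diagram isotopic to plat position, so that throughout the argument one need only count embedded (rather than immersed) disks.
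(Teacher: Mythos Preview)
The paper does not prove this theorem at all; it is quoted verbatim as Theorem~2.19 of \cite{NgSatellites} and cited without argument, serving only as background so that the multi-pointed DGA can be used in the subsequent base-point manipulations. There is therefore no ``paper's own proof'' to compare your proposal against.

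That said, your outline is the expected one and matches in spirit the strategy carried out in \cite{NgSatellites} (and before that in \cite{Chekanov} and \cite{EtnyreInvariants}): the degree and $\partial^2=0$ statements reduce to the single-base-point case because the extra $t_i$ carry degree $0$ and the base-point counts are additive under gluing, while invariance is obtained by isolating each Reidemeister move from the base points and separately checking that sliding a base point across a crossing is the tame automorphism $c\mapsto t_i^{\pm1}c$ or $c\mapsto c\,t_i^{\pm1}$. One small remark: the ``secondary subtlety'' you raise about keeping the diagram in plat position throughout is not actually needed for the invariance proof, since the DGA and its stable tame isomorphism class are defined for general Lagrangian projections; plat position is only a convenience in this paper for explicit computations with dips, not a hypothesis of Theorem~\ref{thm:2.19equiv}.
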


\begin{thm}[\cite{NgSatellites} Thm 2.20]\label{thm:2.20equiv}
  Let $*_1,\ldots,*_k$ and $*'_1,\ldots,*'_k$ denote two collections
  of base points on the Lagrangian resolution of the front diagram of
  a Legendrian knot $\Lambda$, each of which is cyclically ordered
  along $\Lambda$. Let $(\calA(\Lambda,*_1,\ldots,*_k),\partial)$ and
  $\calA(\Lambda,*'_1,\ldots,*'_k),\partial')$ denote the
  corresponding multi-pointed DGAs. Then there is a DGA isomorphism
  $\Psi:(\calA(\Lambda,*_1,\ldots,*_k),\partial)\to(\calA(\Lambda,*'_1,\ldots,*'_k),\partial')$
  such that $\Psi(t_i)=t_i$ for all $i$.
\end{thm}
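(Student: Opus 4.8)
The plan is to realize $\Psi$ as a composition of ``elementary'' DGA isomorphisms, each induced by sliding one base point past one crossing of the Lagrangian resolution, and each fixing every $t_i$. First I would reduce to this case. Two base points lying on the same strand with no crossing between them can be interchanged without changing any disk count or any capping path, so such an interchange induces the identity isomorphism; combining this with crossing-slides, and using that both collections are cyclically ordered along $\Lambda$, one can interpolate from $\{*_1,\dots,*_k\}$ to $\{*'_1,\dots,*'_k\}$ through configurations each differing from the previous by moving a single base point past a single crossing: first slide $*_1$ to the location of $*'_1$, then $*_2$ to $*'_2$, and so on, the common cyclic order ensuring the index-$i$ matching is preserved throughout. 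Hence it suffices to handle one crossing-slide.

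So suppose $*'$ is obtained from $*$ by sliding $*_i$ past a crossing $c$ along one of its two strands, and write $(\calA,\partial)$, $(\calA,\partial')$ for the two DGAs, which have the same underlying algebra. I would let $\Psi$ be the algebra map with $\Psi(a)=a$ for every generator $a\neq c$, $\Psi(t_j)=t_j$ for all $j$, and $\Psi(c)=t_i^{\delta}c$, where $\delta\in\{\pm1\}$ is determined by which strand $*_i$ crosses and in which direction; since $t_i$ is central in $R$, there is no left/right ambiguity. This is plainly an algebra isomorphism (invert by replacing $\delta$ with $-\delta$). It also preserves the grading: gradings depend only on $*_1$, so for $i>1$ nothing changes and $\lvert t_i\rvert=0$; and for $i=1$, any capping path $\gamma_{c'}$ with $c'\neq c$ is unchanged (a capping path running through $c$ along the slid strand would have to traverse the whole germ swept by $*_1$, hence could avoid $*_1$ neither before nor after the slide), while $\gamma_c$ ends at $c$ along the over-strand and is forced onto the complementary path, changing $r(\gamma_c)$ by $\pm r(\Lambda)$ and $\lvert c\rvert$ by $\mp 2r(\Lambda)=\pm\lvert t_1\rvert$; the exponent $\delta$ is exactly the one making $\Psi(c)=t_1^{\delta}c$ degree-preserving.

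The remaining point is $\Psi\circ\partial=\partial'\circ\Psi$. Orientation signs are insensitive to base points, so only the exponents $n_{*_i}$ can change, and only on disks whose boundary meets the germ swept by $*_i$. If a disk runs straight through $c$ along the slid strand, its boundary contains that germ on both sides of $c$, so $n_{*_i}$ is unchanged; if a disk has a corner at $c$, a local analysis at the crossing shows $n_{*_i}$ changes by $+1$ or $-1$ depending only on the Reeb sign of the quadrant occupied at $c$, and oppositely for the two signs. The positive-Reeb corners at $c$ are exactly the positive corners appearing in $\partial c$, and the negative-Reeb corners at $c$ are exactly the corners at which $c$ appears in some $\partial y$; thus $\partial'c=t_i^{\pm1}\partial c$ and each monomial of $\partial y$ containing $c$ acquires the reciprocal factor $t_i^{\mp1}$ in $\partial'y$. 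A short computation, carried out separately for $\partial c$ and for $\partial y$ when $c$ occurs in $\partial y$ (including with multiplicity), using centrality of $t_i$, shows that for the right $\delta$ these shifts are cancelled exactly by the factor that $\Psi(c)=t_i^{\delta}c$ contributes, giving $\Psi\circ\partial=\partial'\circ\Psi$. Composing the elementary isomorphisms along the interpolation of the first step then produces a DGA isomorphism with $\Psi(t_i)=t_i$ for all $i$.

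I expect the main obstacle to be exactly this local bookkeeping: checking that the base-point exponent of a disk cornering at $c$ shifts by a uniform $\pm1$ determined only by the Reeb sign of that corner (a quadrant-by-quadrant verification at $c$ that must account for the orientations of both strands), and that the shifts on $\partial c$ and on the $c$-containing monomials of the various $\partial y$ precisely offset the factor introduced by $\Psi(c)$, including when $c$ appears more than once in a monomial and in the degenerate case where a single disk has corners at $c$ of both Reeb signs. A lesser issue is pinning down the interpolation in the first step — confirming that, because adjacent base points may be swapped for free, the base points can indeed be moved one at a time so as to carry one configuration to the other with the indexing intact.
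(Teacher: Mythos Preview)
Your approach is essentially the one the paper describes (the result is only cited here, with the construction sketched immediately after the statement): reduce to sliding a single base point past a single crossing, and in that elementary step send the crossing $c$ to $t_i^{\pm1}c$ while fixing all other generators and all $t_j$. One small discrepancy worth noting: the paper (following \cite{NgSatellites}) distinguishes whether $*_i$ slides along the \emph{over}-strand or the \emph{under}-strand, setting $\Psi(c)=t_i^{\pm1}c$ in the first case and $\Psi(c)=c\,t_i^{\pm1}$ in the second; you collapse this by invoking centrality of $t_i$, which is legitimate in this paper's conventions (the $t_i$ live in the commutative coefficient ring and the differential places all $t$-powers to the left), but in the source reference the $t_i$ are genuine noncommuting generators inserted into the word at the location where the disk boundary crosses the base point, and there the left/right distinction is what makes the chain-map verification go through without the ``degenerate'' multiple-corner complications you flag at the end.
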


In the proof of this theorem, the isomorphism $\Psi$ is defined so
that $\Psi(c_j)=c_j$ if no base point is pushed over or under the
crossing $c_j$. If, however, the base point $*_i$ is pushed over
crossing $c_j$, then $\Psi(c_j)=t_i^{\pm1}c_j$, the sign depending on
whether the base point is pushed along the knot in the direction of
the orientation or against the orientation of the knot. If the base
point $*_i$ is pushed under the crossing $c_j$, then
$\Psi(c_j)=c_jt_i^{\pm1}$, again, the sign depending on the
orientation of the knot.

\begin{thm}[\cite{NgSatellites} Thm 2.21]\label{thm:2.21equiv}
  Let $*_1,\ldots,*_k$ be a cyclically ordered collection of base
  points along $\Lambda$, and let $*$ be a single base point on
  $\Lambda$. Then there is a DGA homomorphism
  $\phi:(\calA(\Lambda,*),\partial)\to(\calA(\Lambda,*_1,\ldots,*_k),\partial)$
  such that $\phi\circ\partial=\partial\circ\phi$ and
  $\phi(t)=t_1\cdots t_k$.
\end{thm}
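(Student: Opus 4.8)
The plan is to reduce, using Theorem \ref{thm:2.20equiv}, to the case in which all of the base points lie in a single short arc $I$ of the Lagrangian resolution containing no crossings and no cusps; in that situation the map $\phi$ is simply the identity on crossings, sending $t$ to $t_1\cdots t_k$, and the required properties fall out from the fact that any disk boundary entering $I$ must run all the way across it. Concretely: fix such an arc $I$, disjoint from all crossings (including the $q_j$) and all cusps, and choose points $p_0,p_1,\dots,p_k$ on $I$ in the order in which the orientation of $\Lambda$ meets them. By Theorem \ref{thm:2.20equiv} with one base point there is a DGA isomorphism $\iota_1\colon(\calA(\Lambda,*),\partial)\to(\calA(\Lambda,p_0),\partial)$ with $\iota_1(t)=t$; by Theorem \ref{thm:2.20equiv} with $k$ base points there is a DGA isomorphism $\iota_2\colon(\calA(\Lambda,p_1,\dots,p_k),\partial)\to(\calA(\Lambda,*_1,\dots,*_k),\partial)$ with $\iota_2(t_i)=t_i$ for all $i$ (using that $p_1,\dots,p_k$ along $I$ have the same cyclic order as $*_1,\dots,*_k$ along $\Lambda$). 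It then suffices to build a DGA homomorphism $\phi_0\colon(\calA(\Lambda,p_0),\partial)\to(\calA(\Lambda,p_1,\dots,p_k),\partial)$ with $\phi_0(t)=t_1\cdots t_k$, since $\phi:=\iota_2\circ\phi_0\circ\iota_1$ will then be a DGA homomorphism with $\phi(t)=t_1\cdots t_k$.

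Next I would define $\phi_0$ on generators by $\phi_0(c_j)=c_j$ for every crossing $c_j$ and $\phi_0(t)=t_1\cdots t_k$, extended as a unital algebra map; this is well defined because $t_1\cdots t_k$ is a unit in $\integers[t_1^{\pm1},\dots,t_k^{\pm1}]$. To see that $\phi_0$ is degree-preserving, note $\lvert t_1\cdots t_k\rvert=-2r(\Lambda)=\lvert t\rvert$, and that for each crossing $c_j$ the capping path avoiding $p_0$ coincides with the capping path avoiding $p_1$: the two arcs of $\Lambda$ joining the under-strand and over-strand of $c_j$ are distinguished by whether or not they contain $I$ (recall $I$ meets no crossing), and both ``avoid $p_0$'' and ``avoid $p_1$'' select the arc disjoint from $I$. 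Hence the two algebras assign $c_j$ the same degree, so $\phi_0$ has degree $0$.

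For the chain-map identity, recall that the embedded disks contributing to $\partial$, the words $w_f$ in the crossings that they produce, and their orientation signs $\epsilon(f)$ involve no base points, so only the base-point exponents require comparison. Given such a disk $f$ contributing to $\partial a$, its boundary $f(\partial D^2)$ is a concatenation of arcs of the Lagrangian diagram running between consecutive corners, and no such arc lies inside $I$; hence every component of $f(\partial D^2)\cap I$ is a full traversal of $I$, meeting each of $p_0,\dots,p_k$ exactly once and with the same sign relative to the orientation of $\Lambda$. Thus $n_{p_0}(f)=n_{p_1}(f)=\dots=n_{p_k}(f)=:N_f$, so the contribution of $f$ to $\partial(\phi_0 a)$ is $\epsilon(f)\,t_1^{N_f}\cdots t_k^{N_f}\,w_f=\phi_0\bigl(\epsilon(f)\,t^{N_f}\,w_f\bigr)$, which is $\phi_0$ applied to the contribution of $f$ to $\partial a$. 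Summing over all disks, and using that $\phi_0$ is an algebra map together with the signed Leibniz rule (legitimate since $\phi_0$ preserves degree), gives $\phi_0\circ\partial=\partial\circ\phi_0$ on all of $\calA$.

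I expect the main work to lie in the reduction step: justifying that Theorem \ref{thm:2.20equiv} genuinely brings an arbitrary base-point configuration into the arc $I$, and, more delicately, checking that the two capping-path conventions (avoiding $p_0$ versus avoiding $p_1$) produce exactly the same gradings once everything has been clustered. Once $I$ is arranged to be crossing- and cusp-free, the verification that $\phi_0$ is a chain map is essentially forced, since every passage of a disk boundary through $I$ records the same exponent at every base point.
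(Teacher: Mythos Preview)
The paper does not supply its own proof of this statement: Theorem~\ref{thm:2.21equiv} is quoted from \cite{NgSatellites} and immediately used, with no argument given. So there is no ``paper's proof'' to compare against.

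That said, your argument is sound and is essentially the natural one. A couple of small points worth tightening. First, when you invoke Theorem~\ref{thm:2.20equiv} to slide base points into $I$, the isomorphism $\Psi$ fixes each $t_i$ but generally does \emph{not} fix the crossing generators (as the paper notes just after Theorem~\ref{thm:2.20equiv}, $\Psi(c_j)=t_i^{\pm1}c_j$ whenever $*_i$ is pushed past $c_j$); your proof only uses that $\iota_1,\iota_2$ are DGA isomorphisms with $\iota_1(t)=t$ and $\iota_2(t_i)=t_i$, so this is fine, but be careful not to implicitly assume $\iota_2$ fixes crossings. Second, the degree check for $\phi_0$ on $t$ uses the paper's convention that $\lvert t_1\rvert=-2r(\Lambda)$ while $\lvert t_i\rvert=0$ for $i>1$, so indeed $\lvert t_1\cdots t_k\rvert=-2r(\Lambda)=\lvert t\rvert$; you might state this explicitly. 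Finally, your chain-map verification relies on the paper's convention that the $t_i$-powers sit in the coefficient ring (hence commute with the crossing generators), so that a single exponent $N_f$ at each $p_i$ suffices to describe the contribution; this is correct here since $\calA$ is an algebra \emph{over} $\integers[t_1^{\pm1},\dots,t_k^{\pm1}]$, but in other conventions the base-point letters are interleaved in the word and one must be slightly more careful.
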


Thus, we can assume there is one base point on each of the right
cusps.  Also, this shows us that if $\epsilon'$ is an augmentation on
the diagram after moving the base point $*_i$ over the crossing $c_j$,
then $\epsilon=\epsilon'\Psi$ is an augmentation on the diagram before
moving the base point.

\begin{rmk}\label{rmk:basepointMoving}
  In summary, if $\epsilon(t_i)=-1$, then moving the base point $*_i$
  over or under a crossing only changes the augmentation by changing
  the sign of the augmentation on that crossing, no matter the
  orientation of the strand.
\end{rmk}

Note that these theorems tell us that if $t$ is the variable
associated to the original base point $*$, and $t_1,\ldots,t_s$ are
the variables associated to the base points $*_1,\ldots,*_s$ in the
new diagram, $\epsilon'$ is an augmentation on the original diagram,
and $\epsilon$ is augmentation on the new diagram resulting from
Theorem \ref{thm:2.21equiv}, then
\[\epsilon'(t)=\epsilon(t_1\cdots t_s)=\prod_{i=1}^s\epsilon(t_i).\]

\subsection{Augmentations before and after type II moves}
To understand how augmentations before the addition of a dip relate to
augmentations after, we need to consider the stable DGA isomorphism
induced by a type II move. Suppose $(\calA_Z',\partial')$ is the DGA
over $Z$ for a knot diagram before a type II move and that
$(\calA_Z,\partial)$ is the DGA over $Z$ afterward. So

\begin{align*}
  \calA_Z&=\calA_Z(a,b,a_1,\ldots,a_r,b_1,\ldots,b_s;\partial)\\
  \calA'_Z&=\calA_Z(a_1,\ldots,a_r,b_1,\ldots,b_s;\partial'),
\end{align*}
where $Z=\integers[t_1,t_1^{-1},\ldots,t_q,t_q^{-1}]$.  Suppose that
the other crossings are ordered by height:
\[h(b_s)\geq\dots\geq h(b_1)\geq h(b)>h(a)\geq h(a_1)\geq\dots\geq
h(a_r).\]

It is possible to construct a dip in the plat diagram so that this
ordering takes the following form: Suppose strand $k$ is pushed over
strand $\ell$. Each $a_j$ either lies to the left of the dip or $a_j=a_{mn}$ or
$b_{mn}$ with $m-n\leq k-\ell$. Similarly, $b_j$ either lies to the
right of the dip or $b_j=a_{mn}$ or $b_{mn}$ with $m-n>k-\ell$.

\begin{figure}
  \labellist
  \small\hair 2pt
  \pinlabel $b$ [b] at 92 173
  \pinlabel $a$ [b] at 264 173
  \endlabellist
  \includegraphics[width=3.5in]{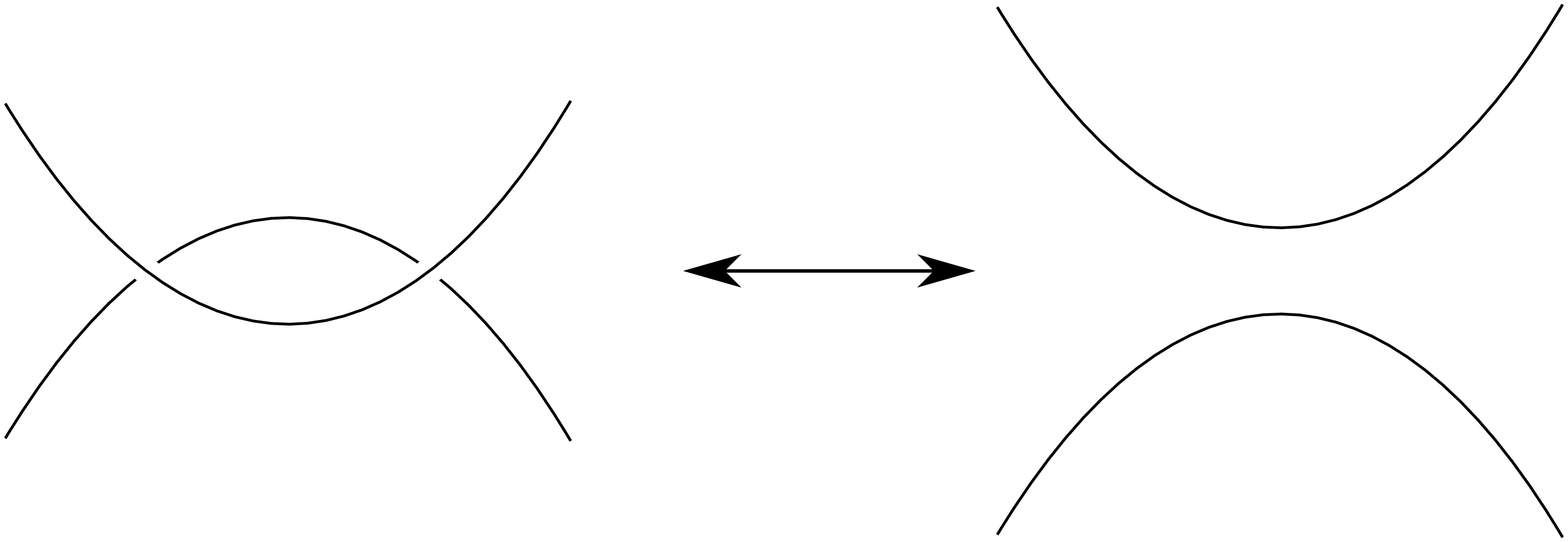}
  \caption{A type II Reidemeister move with crossings $a$ and $b$.}
  \label{fig:rIIMove}
\end{figure}

Recall the algebra $\mathcal{E}_i=\AZ(e_1,e_2)$ with $\lvert
e_1\rvert=i-1$, $\lvert e_2\rvert=i$, $\partial e_2=e_1,$ and $\partial
e_1=0$. Define the vector space map $H:S(\AZ')\to S(\AZ')$ by

\[H(w)=\begin{cases}
  0&w\in\AZ'\\
  0&w=Qe_2R\text{ with }Q\in\AZ',R\in S(\AZ')\\
  (-1)^{\lvert Q\rvert+1}Qe_2R&w=Qe_1R\text{ with }Q\in\AZ',R\in
  S(\AZ').
\end{cases}\]

Note that either crossing $a$ or $b$ is a positive crossing, so
$\partial b=-a+v$, where $v$ is a sum of terms in the $a_i$ and
$t_i^{\pm1}$. Define $\Phi_0:\AZ\to S_{\lvert b\rvert}(\AZ')$ by
\[\Phi_0(w)=\begin{cases}
  e_2&w=b\\
  -e_1+v&w=a\\
  w&\text{otherwise.}
\end{cases}\] 
\cite{EtnyreInvariants} tells us $\Phi_0$ is a
grading-preserving elementary isomorphism. Inductively define maps
$\Phi_i$ on the generators of $\AZ$ by:
\[\Phi_i(w)=\begin{cases}
  b_i+H(\partial'b_i-\Phi_{i-1}\partial b_i)&w=b_i\\
  \Phi_{i-1}(w)&\text{otherwise.}
\end{cases}\]
In \cite{EtnyreInvariants}, it is shown that $\Phi:=\Phi_s$ is a DGA
isomorphism between $\AZ$ and $S_{\lvert b\rvert}(\AZ')$.

If there is an augmentation $\epsilon'$ on $S(\AZ')$, then
$\epsilon=\epsilon'\Phi$ is an augmentation on $\AZ$. One can check
that
\begin{equation} \label{eqn:augComp}
  \epsilon(a_i)=\epsilon'(a_i),\quad\quad\epsilon(a)=\epsilon'(v),\quad\quad\epsilon(b)=\epsilon'(e_2).
\end{equation}
Recall that if $\lvert e_2\rvert=0$, then $\epsilon'(e_2)$ can be chosen
arbitrarily.

Analogous to the result for the $\integers/2$ case in
\cite{SabloffAug}, we have:
\begin{lem}\label{lem:3.2equiv} After a
  type II Reidemeister move involved in making a dip in a plat
  diagram, suppose $\epsilon(b_i)$ has been determined for $i<j$. Then
  \[\epsilon(b_j)=\epsilon'(b_j)-\sum_p\epsilon(b_j;Q_paR_p)(-1)^{\lvert
    \Phi(Q_p)\rvert}\epsilon(Q_pbR_p)\] for
  $Q_p,R_p\in\AZ'$ such that $\partial
  b_j=P+\sum_p\epsilon(b;Q_paR_p)Q_paR_p$ where $P$ is the sum of the
  terms in $\partial b_j$ which do not contain $a$.
\end{lem}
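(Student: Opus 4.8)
The plan is to verify the formula for $\epsilon(b_j)$ by unwinding the definitions of $\Phi=\Phi_s$ and the homotopy operator $H$, and then applying $\epsilon'$. First I would recall that $\epsilon=\epsilon'\circ\Phi$ and that, by construction, $\Phi(b_j)=\Phi_j(b_j)=b_j+H\bigl(\partial' b_j-\Phi_{j-1}\partial b_j\bigr)$ while $\Phi(a_i)=a_i$ and $\Phi(a)=-e_1+v$, $\Phi(b)=e_2$. So computing $\epsilon(b_j)$ amounts to applying $\epsilon'$ to $b_j+H\bigl(\partial' b_j-\Phi_{j-1}\partial b_j\bigr)$. The term $\epsilon'(b_j)$ is immediate, and since $\epsilon'$ kills $\partial'$-images (it is an augmentation of $S(\AZ')$, hence $\epsilon'\circ\partial'=0$, and more to the point $\partial' b_j$ contributes zero once we account for how $H$ interacts with it — see below), the content is in evaluating $\epsilon'\bigl(H(\Phi_{j-1}\partial b_j)\bigr)$.

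Next I would split $\partial b_j$ as $P+\sum_p \epsilon(b;Q_p a R_p)\,Q_p a R_p$ exactly as in the statement, where $P$ collects the terms not involving $a$, and where each coefficient $\epsilon(b;Q_p a R_p)$ is really the orientation-sign coefficient of the disk with a negative corner at $a$ (using the notation $\epsilon(b_j; Q_p a R_p)$ for the sign of that disk). Since $a$ appears at most once in any term of $\partial b_j$ by Lemma \ref{lem:3.1equiv}, this decomposition is unambiguous and each $Q_p, R_p$ lies in $\AZ'$ (does not involve $a$ or $b$). Applying $\Phi_{j-1}$, the terms in $P$ map into $S(\AZ')$ without producing any $e_1$ or $e_2$ (because $P$ involves none of $a,b$, and by the height ordering $\Phi_{j-1}$ acts as the identity on the relevant lower generators up to $\partial'$-corrections that themselves lie in $\AZ'$), so $H$ annihilates them. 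Each term $Q_p a R_p$ maps under $\Phi_{j-1}$ to $\Phi(Q_p)(-e_1+v)\Phi(R_p) = -\Phi(Q_p)e_1\Phi(R_p)+\Phi(Q_p)v\Phi(R_p)$; the second summand has no $e_1$, so $H$ kills it, while on the first summand $H$ replaces $-e_1$ by $-(-1)^{\lvert\Phi(Q_p)\rvert+1}e_2 = (-1)^{\lvert\Phi(Q_p)\rvert}e_2$, giving $H(\Phi_{j-1}(Q_paR_p)) = (-1)^{\lvert\Phi(Q_p)\rvert}\Phi(Q_p)e_2\Phi(R_p)$ (here I use that $\Phi_{j-1}$ and $\Phi$ agree on $Q_p,R_p$ since these are lower generators, absorbing the $\partial'$-correction bookkeeping). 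Applying $\epsilon'$ and using $\epsilon'(e_2)=\epsilon(b)$ together with $\epsilon=\epsilon'\Phi$, this becomes $(-1)^{\lvert\Phi(Q_p)\rvert}\epsilon(Q_p)\epsilon(b)\epsilon(R_p) = (-1)^{\lvert\Phi(Q_p)\rvert}\epsilon(Q_p b R_p)$, and the overall sign from $H(\partial' b_j - \Phi_{j-1}\partial b_j)$ carries the extra minus, yielding the claimed formula.

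The main obstacle I anticipate is the careful sign bookkeeping and, relatedly, justifying that $\Phi_{j-1}$ may be replaced by $\Phi$ on the pieces $Q_p, R_p$ and that the $\partial' b_j$ term and the $P$ terms genuinely contribute nothing after $H$. The first point requires knowing that each generator appearing in $Q_p$ or $R_p$ has already had its $\Phi$-value stabilized — this follows from the height ordering $h(b_s)\geq\cdots\geq h(b_1)\geq h(b)>h(a)\geq h(a_1)\geq\cdots\geq h(a_r)$, since a disk with corners at $b_j$, $a$, and the $Q_p,R_p$-crossings forces those crossings to be lower than $b_j$, hence among the $a_i$ or the $b_i$ with $i<j$, on all of which $\Phi_i$ has already converged. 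The second point is where I would lean on the structure of $H$: $H$ vanishes on $\AZ'$ and on words of the form $Qe_2R$, and only acts nontrivially on words containing $e_1$, which in $\Phi_{j-1}\partial b_j$ arise solely from the image of $a$; combined with $\epsilon'\circ\partial'=0$ this disposes of the $\partial'b_j$ contribution as well. Everything else is the routine verification that the degree exponent $\lvert\Phi(Q_p)\rvert$ is the correct one, which matches the $(-1)^{\lvert Q\rvert+1}$ in the definition of $H$ after the sign from $-e_1$ is absorbed.
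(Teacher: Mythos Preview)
Your approach is the same as the paper's, and the overall skeleton is correct, but there is a genuine gap in the justification. The key assertion you make---that in $\Phi_{j-1}\partial b_j$ the generator $e_1$ ``arises solely from the image of $a$''---is exactly what needs proof, and it requires an induction that you never set up. The paper proves by induction on $j$ the auxiliary statement that $\Phi(b_i)$ contains no $e_1$ for $i<j$; only with this in hand can one conclude that $\Phi(Q_p)$, $\Phi(R_p)$, and $\Phi(P)$ contain no $e_1$, so that $H$ kills $\Phi(P)$ and acts on $\Phi(Q_p)(-e_1+v)\Phi(R_p)$ in the expected way. Your parenthetical claim that ``$\Phi_{j-1}$ acts as the identity on the relevant lower generators up to $\partial'$-corrections that themselves lie in $\AZ'$'' is simply false: for $i<j$ one has $\Phi_{j-1}(b_i)=b_i+(\text{terms involving }e_2)$, and these $e_2$-terms are not in $\AZ'$. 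Likewise, $P$ may contain $b$ (Lemma~\ref{lem:3.1equiv} only forbids $a$ and $b$ in the same term), so $\Phi(P)$ may contain $e_2$; what saves you is that it contains no $e_1$, and $H$ vanishes on words whose first $e$-generator is $e_2$.

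Two smaller points: first, $H(\partial' b_j)=0$ holds because $\partial' b_j\in\AZ'$ and $H$ vanishes on $\AZ'$; the augmentation condition $\epsilon'\circ\partial'=0$ plays no role there. Second, once you know $\Phi(Q_p)$ may contain $e_2$, the computation $H\bigl(-\Phi(Q_p)e_1\Phi(R_p)\bigr)=(-1)^{\lvert\Phi(Q_p)\rvert}\Phi(Q_p)e_2\Phi(R_p)$ deserves a word of care (monomials in $\Phi(Q_p)$ that already contain $e_2$ are killed by $H$ rather than transformed), though after applying $\epsilon'$ this does not affect the final formula. The fix for all of this is to structure the argument as an explicit induction on $j$, proving simultaneously the formula for $\epsilon(b_j)$ and the fact that $e_1$ does not appear in $\Phi(b_j)$.
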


\begin{proof}
  We know
  \[\Phi(b_i)=b_i+H(\partial'b_i-\Phi\partial b_i).\]

  We will prove the result by inducting on $j$. For the base case, suppose
  $j=1$. Since $\partial$ lowers height, we know $\partial
  b_1\in\AZ(a,b,a_1,\ldots,a_r)$ and
  $\partial'b_1\in\AZ(a_1,\ldots,a_r)$. By Lemma \ref{lem:3.1equiv},
  we know if $P$ is the sum of terms in $\partial b_1$ which do not
  contain $a$, then $\partial b_1$ has the form
  \[\partial b_1=P+\sum_p\epsilon(b_1;Q_paR_p)Q_paR_p,\]
  where $Q_p,R_p\in\AZ(a_1,\ldots,a_r)$. Therefore
  \begin{align*}
    H(\partial'b_1-\Phi\partial b_1)&=H\left(\partial'b_1-\Phi\left(P+\sum_p\epsilon(b_1;Q_paR_p)Q_paR_p\right)\right)\\
    &=H\left(\partial'b_1-\Phi(P)-\sum_p\epsilon(b_1;Q_paR_p)Q_p(-e_1+v)R_p\right).
  \end{align*}
  We know $\partial'b_1\in\AZ(a_1,\ldots,a_r)$, so
  $H(\partial'b_1)=0$. Since $P\in\AZ(b,a_1,\ldots,a_r)$, we know
  $\Phi(P)\in\AZ(e_2,a_1,\ldots,a_r)$ and so $H(\Phi(P))=0$. Thus
  \begin{align*}
    H(\partial'b_1-\Phi\partial b_1)&=-\sum_p\epsilon(b_1;Q_paR_p)H(Q_p(-e_1+v)R_p)\\
    &=\sum_p(-1)^{\lvert Q_p\rvert+1}\epsilon(b_1;Q_paR_p)Q_pe_2R_p.
  \end{align*}
  So
  \begin{align*}
    \epsilon(b_1)&=\epsilon'(\Phi(b_1))\\
    &=\epsilon'(b_1+H(\partial'b_1-\Phi\partial b_1))\\
    &=\epsilon'(b_1)+\epsilon'\left(\sum_p(-1)^{\lvert Q_p\rvert+1}\epsilon(b_1;Q_paR_p)Q_pe_2R_p\right)\\
    &=\epsilon'(b_1)-\sum_p(-1)^{\lvert
      Q_p\rvert}\epsilon(b_1;Q_paR_p)\epsilon(Q_pbR_p).
  \end{align*}

  Since
  \begin{align*}
    \Phi(b_1)&=b_1+H(\partial'b_1-\Phi\partial b_1)\\
    &=b_1-\sum_p(-1)^{\lvert Q_p\rvert}\epsilon(b_1;Q_paR_p)Q_pe_2R_p,
  \end{align*}
  we have also shown that $e_1$ does not appear in $\Phi(b_1)$.

  Now suppose the equation is satisfied for $b_i$ and that $e_1$ does
  not appear in $\Phi(b_i)$ for $i<j$. As before, since $\partial$ is
  height decreasing, $\partial
  b_j\in\AZ(a,b,a_1,\ldots,a_r,b_1,\ldots,b_{j-1})$ and
  $\partial'b_j\in\AZ(a_1,\ldots,a_r,b_1,\ldots,b_{j-1})$. By Lemma
  \ref{lem:3.2equiv} we know that if $P$ is the sum of terms in
  $\partial b_j$ which do not contain $a$, then
  \[\partial b_j=P+\sum_p\epsilon(b_j;Q_paR_p)Q_paR_p,\]
  where $Q_p,R_p\in\AZ(a_1,\ldots,a_r,b_1,\ldots,b_{j-1})$. By the
  inductive assumption, $\Phi(b_i)$ does not contain $e_1$ for $i<j$
  and so $\Phi(Q_p),\Phi(R_p)$, and $\Phi(P)$ do not contain $e_1$. So
  \begin{align*}
    H(\Phi(Q_paR_p))&=H(\Phi(Q_p)(-e_1+v)\Phi(R_p))\\
    &=(-1)^{\lvert\Phi(Q_p)\rvert}\Phi(Q_p)e_2\Phi(R_p).
  \end{align*}
  Therefore
  \[H(\partial'b_j-\Phi\partial b_j)
  =-\sum_p(-1)^{\lvert\Phi(Q_p)\rvert}\epsilon(b_j;Q_paR_p)\Phi(Q_p)e_2\Phi(R_p).\]
  Thus $\Phi(b_j)=b_j+H(\partial'b_j-\Phi\partial b_j)$ does not
  contain $e_1$.

  We then see
  \begin{align*}
    \epsilon(b_j)&=\epsilon'\Phi(b_j)\\
    &=\epsilon'(b_j+H(\partial'b_j-\Phi\partial b_j))\\
    &=\epsilon'(b_j)-\sum_p(-1)^{\lvert
      \Phi(Q_p)\rvert}\epsilon(b_j;Q_paR_p)\epsilon(Q_pbR_p),
  \end{align*}
  as desired.
\end{proof}

Therefore, after a type II move involved in making a dip, if
$\epsilon(b_i)$ has been determined for $i<j$, then
\[\epsilon(b_j)=\epsilon'(b_j)-\sum(-1)^{\lvert
  \Phi(Q_p)\rvert}\epsilon(b_j;Q_paR_p)\epsilon(Q_pbR_p),\] where the
sum is over totally augmented disks with positive corner at $b_j$ and
a negative corner at $b$.

\bigskip
\section{Augmentation to Ruling}\label{sec:augRuling}
In this section, we will use a construction similar to that of
Sabloff's in \cite{SabloffAug} to construct a $\rho$-graded normal
ruling from a $\rho$-graded augmentation to a fixed field $F$. This
shows the forward direction of Theorem \ref{thm:main}. Suppose that
$D$ is the front diagram of a Legendrian knot $\Lambda$ in plat
position. By the discussion in \S \ref{sec:basepoints} we can assume
that there are base points $*_1,\ldots,*_m$, one on each right cusp,
labeled from top to bottom corresponding to $t_1,\ldots,t_m$. Let
$\epsilon':\calA_Z\to F$ be a $\rho$-graded augmentation of the DGA
$(\calA_Z,\partial)$ over $Z=\integers[t_1^{\pm1},\ldots,t_m^{\pm1}]$
of $D$. (Note that then $\epsilon'(t)=\prod_{i=1}^m\epsilon'(t_i)$ for
the corresponding augmentation over $\integers[t,t^{-1}]$.)  We will
construct a $\rho$-graded normal ruling for the knot diagram while
simultaneously extending the augmentation to an augmentation
$\epsilon$ of the dipped diagram by adding one dip at a time from left
to right. We will add base points to the diagram as we go to simplify
the augmentation.

Start the ruling at the left of the diagram, pairing strands $2k$ and
$2k-1$ for $1\leq k\leq m$. We will extend the ruling from left to
right along the diagram such that Property (R), stated below, is
satisfied. We can ensure Property (R) is satisfied because when
introducing new crossings in the creation of the dips, the
$a/b$-lattices, we get to choose where the augmentation sends the
crossings in the $b$-lattice. We have enumerated the conditions we
will need to check to ensure we end up with a $\rho$-graded
augmentation of the dipped diagram and a $\rho$-graded normal ruling.

\bigno {\bf Property (R):} At any dip, the generator $a^j_{rs}$ is
augmented if and only if the strands $r$ and $s$ are paired in the ruling between
$c_j$ and $c_{j+1}$.

\bigskip Recall that the crossings from the resolution of the right
cusps are labeled $q_1,\ldots, q_m$ from top to bottom and that the
remaining crossings are labeled $c_1,\ldots,c_n$ from left to
right. Also, the strands are labeled from bottom to top. It will also
be important to recall that the orientation signs at positive original
crossings are given by the left most diagram in Figure
\ref{fig:orientation}, while orientation signs at positive crossings
in the $a/b$-lattices are given in the middle diagram.

\medno We will inductively define augmentations on partially dipped
diagrams by adding dips one at a time from left to right and defining
augmentations on these diagrams. In particular, if $\epsilon_j$ is an
augmentation on the diagram with dips added up to the crossing $c_j$,
we will extend the ruling and construct $\epsilon_{j+1}$, an
augmentation on the diagram with dips added up to the crossing
$c_{j+1}$:
\begin{enumerate}
\item Extend the ruling over $c_j$ by a switch if
  $\epsilon_j(c_j)\neq0$ and just to the left of $c_j$, the ruling
  matches configuration (a), (b), or (c) in Figure
  \ref{fig:config}. Otherwise, no switch.
\item Consult Figure \ref{fig:basepoints} to determine whether any
  base points will be added between $c_j$ and $c_{j+1}$. For each
  added base point, follow the strand it will end up on to the right
  all the way to a right cusp and add a base point $*_\alpha$ at the
  right cusp. Fix $\epsilon_{j+1}(t_\alpha)=-1$ and recall from
  \S\ref{sec:basepoints} that we must then set
  $\epsilon_{j+1}(t_i)=-\epsilon_j(t_i)$, where $*_i$ is the base
  point already at the right cusp ($1\leq i\leq m$). Move the base
  point $*_\alpha$ along the strand to between $c_j$ and $c_{j+1}$,
  modifying the augmentation on any crossing the base point goes over
  or under by a factor of $-1$ according to Remark
  \ref{rmk:basepointMoving}.
\item Place a dip between crossings $c_j$ and $c_{j+1}$, making sure
  to place the dip so that the new base points are to the right if
  they end up in the dip according to Figure \ref{fig:basepoints} and
  to the left if not. Between each Reidemeister II move involved in
  making the dip:
  \begin{enumerate}
  \item Extend the augmentation $\epsilon'$ of the DGA of the diagram
    before the Reidemeister II move to an augmentation $\epsilon$ of
    the DGA of the new diagram satisfying Property (R) by defining
    $\epsilon$ on the two new crossings by Figure \ref{fig:basepoints}
    and modifying $\epsilon$ from $\epsilon'$ by Lemma
    \ref{lem:3.2equiv}.
  \item Move base points to location specified by Figure
    \ref{fig:basepoints} and modify $\epsilon$ using Remark
    \ref{rmk:basepointMoving}.
  \end{enumerate}
\end{enumerate}

Note that $\epsilon_{j+1}$ will agree with $\epsilon_j$ on the diagram
to the left of $c_j$ though, according to Lemma \ref{lem:3.2equiv},
they may differ on $c_{j+1},\ldots,c_n$.

When we complete this process and have a fully dipped diagram, the
augmentation $\epsilon_n=\epsilon$ is a $\rho$-graded augmentation of
the dipped diagram, and we have a normal ruling of the original
diagram. We will also see that the resulting augmentation has
restrictions on what $\epsilon(t)$ equals depending on whether $\rho$
is even or odd, yielding Theorem \ref{thm:rhoEven} and Theorem
\ref{thm:rhoOdd}.

\begin{figure}
  \labellist
  \small\hair 3pt
  \pinlabel {$-$(a)} [b] at 272 1863
  \pinlabel $a_1$ [tl] at 147 1687
  \pinlabel $a_2$ [tl] at 212 1750
  \pinlabel $a$ [b] at 271 1800
  \pinlabel $a^{-1}$ [tr] at 360 1719
  \pinlabel $aa_1$ [tl] at 440 1687
  \pinlabel $aa_2$ [tl] at 505 1750
  
  \pinlabel {$+$(a)} [b] at 972 1863
  \pinlabel $a_1$ [tl] at 847 1687
  \pinlabel $a_2$ [tl] at 912 1750
  \pinlabel $a$ [b] at 971 1800
  \pinlabel $a^{-1}$ [tr] at 1060 1719
  \pinlabel $aa_1$ [tl] at 1140 1687
  \pinlabel $aa_2$ [tl] at 1205 1750
   
  \pinlabel {$-$(b)} [b] at 271 1528
  \pinlabel $a_1$ [tl] at 147 1421
  \pinlabel $a_2$ [tl] at 181 1388
  \pinlabel $a$ [b] at 271 1504
  \pinlabel $a^{-1}a_1a_2^{-1}$ [tr] at 332 1421
  \pinlabel $a^{-1}$ [tr] at 392 1357
  \pinlabel $a^{-1}a_1$ [tl] at 436 1424
  \pinlabel $aa_2$ [tl] at 472 1389
   
  \pinlabel {$+$(b)} [b] at 972 1528
  \pinlabel $a_1$ [tl] at 847 1421
  \pinlabel $a_2$ [tl] at 881 1388
  \pinlabel $a$ [b] at 971 1504
  \pinlabel $a^{-1}a_1a_2^{-1}$ [tr] at 1032 1421
  \pinlabel $a^{-1}$ [tr] at 1092 1357
  \pinlabel $a^{-1}a_1$ [tl] at 1136 1424
  \pinlabel $aa_2$ [tl] at 1172 1389
  
  \pinlabel {$-$(c), product of signs of $a^{j-1}_{Li}$ and $a^{j-1}_{i+1,K}$ is $+1$} [b] at 271 1232
  \pinlabel {$+$(c), product of signs of $a^{j-1}_{Li}$ and $a^{j-1}_{i+1,K}$ is $-1$} [b] at 271 1192
  \pinlabel $a_1$ [tl] at 148 1084
  \pinlabel $a_2$ [tl] at 181 1051
  \pinlabel $a$ [b] at 271 1101
  \pinlabel $a^{-1}a_1a_2^{-1}$ [tr] at 392 1018
  \pinlabel $a^{-1}$ [tr] at 331 1084
  \pinlabel $a^{-1}a_1$ [tl] at 436 1085
  \pinlabel $aa_2$ [tl] at 472 1051
   
  \pinlabel {$-$(c), product of signs of $a^{j-1}_{Li}$ and $a^{j-1}_{i+1,K}$ is $-1$} [b] at 971 1232
  \pinlabel {$+$(c), product of signs of $a^{j-1}_{Li}$ and $a^{j-1}_{i+1,K}$ is $+1$} [b] at 971 1192
  \pinlabel $a_1$ [tl] at 848 1084
  \pinlabel $a_2$ [tl] at 881 1051
  \pinlabel $a$ [b] at 971 1101
  \pinlabel $a^{-1}a_1a_2^{-1}$ [tr] at 1094 1018
  \pinlabel $a^{-1}$ [tr] at 1031 1084
  \pinlabel $a^{-1}a_1$ [tl] at 1136 1085
  \pinlabel $aa_2$ [tl] at 1172 1051

  \pinlabel {(d)} [b] at 271 870
  \pinlabel $a_1$ [tl] at 147 729
  \pinlabel $a_2$ [tl] at 181 760
  \pinlabel $a$ [b] at 271 810
  \pinlabel $a_1$ [tl] at 440 696
  \pinlabel $a_2$ [tl] at 505 760

  \pinlabel {$-$(e)} [b] at 271 540
  \pinlabel $a_1$ [tl] at 147 398
  \pinlabel $a_2$ [tl] at 179 431
  \pinlabel $a$ [b] at 271 513
  \pinlabel $aa_1^{-1}a_2$ [tr] at 329 429
  \pinlabel $a_2$ [tl] at 440 431
  \pinlabel $a_1$ [tl] at 472 398
   
  \pinlabel {$+$(e)} [b] at 971 540
  \pinlabel $a_1$ [tl] at 847 398
  \pinlabel $a_2$ [tl] at 879 431
  \pinlabel $a$ [b] at 971 513
  \pinlabel $aa_1^{-1}a_2$ [tr] at 1029 429
  \pinlabel $a_2$ [tl] at 1139 431
  \pinlabel $a_1$ [tl] at 1172 398

  \pinlabel {$-$(f), product of signs of $a^{j-1}_{L,i+1}$ and $a^{j-1}_{iK}$ is $+1$} [b] at 271 247
  \pinlabel {$+$(f), product of signs of $a^{j-1}_{L,i+1}$ and $a^{j-1}_{iK}$ is $-1$} [b] at 271 207
  \pinlabel $a_1$ [tl] at 147 67
  \pinlabel $a_2$ [tl] at 180 99
  \pinlabel $a$ [b] at 271 117
  \pinlabel $aa_1a_2^{-1}$ [tr] at 394 35
  \pinlabel $a_1$ [tl] at 440 99
  \pinlabel $a_2$ [tl] at 472 67
   
  \pinlabel {$-$(f), product of signs of $a^{j-1}_{L,i+1}$ and $a^{j-1}_{iK}$ is $-1$} [b] at 971 247
  \pinlabel {$+$(f), product of signs of $a^{j-1}_{L,i+1}$ and $a^{j-1}_{iK}$ is $+1$} [b] at 971 207
  \pinlabel $a_1$ [tl] at 847 67
  \pinlabel $a_2$ [tl] at 880 99
  \pinlabel $a$ [b] at 973 122
  \pinlabel $aa_1a_2^{-1}$ [tr] at 1093 38
  \pinlabel $a_1$ [tl] at 1140 100
  \pinlabel $a_2$ [tl] at 1172 67

  \endlabellist
  \includegraphics[scale=.35]{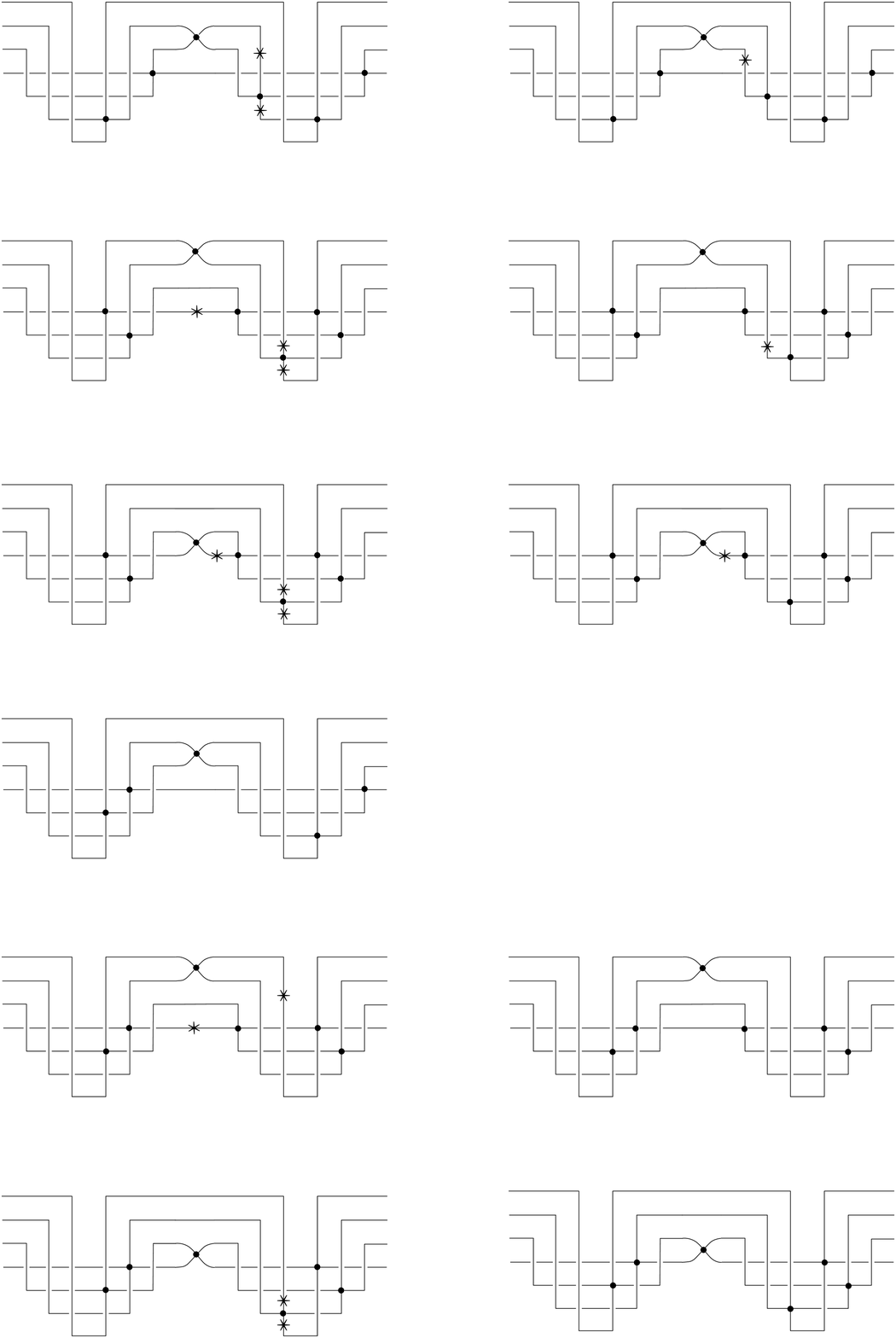}
  \caption{In the diagrams, $*$ denotes a base point. A dot denotes
    the specified crossing is augmented and the augmentation sends the
    crossing to the label. Here $-/+$(a) denotes a negative/positive
    crossing where the ruling has configuration (a) and the rest are
    defined analogously.}
  \label{fig:basepoints}
\end{figure}

For example, Figure \ref{fig:trefoilEx} gives an augmentation to
$\reals$ of the right handed trefoil and the resulting ruling and
augmentation of the dipped diagram from following this process.

\begin{figure}
  \labellist
  \small
  \pinlabel $\frac12$ [b] at 44 230
  \pinlabel $\frac12$ [b] at 447 230 
  \pinlabel $-1$ [l] at 724 308
  \endlabellist
  \includegraphics[width=4in]{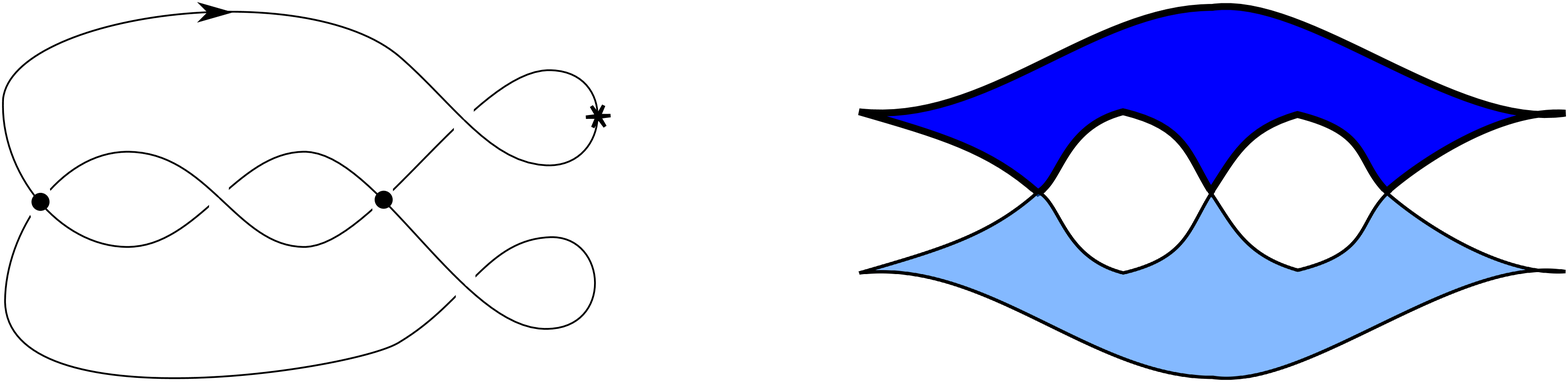}\\ \vspace{.25in}

  \labellist
  \small
  \pinlabel $1$ [tl] at 163 36
  \pinlabel $1$ [tl] at 228 100

  \pinlabel $-\frac12$ [b] at 283 154

  \pinlabel $-2$ [tr] at 377 67
  \pinlabel $-\frac12$ [tl] at 455 36
  \pinlabel $-\frac12$ [tl] at 521 100

  \pinlabel $2$ [b] at 578 154

  \pinlabel $\frac12$ [tr] at 669 67
  \pinlabel $-1$ [tl] at 748 36
  \pinlabel $-1$ [tl] at 813 100

  \pinlabel $-1$ [b] at 870 154

  \pinlabel $-1$ [tr] at 962 67
  \pinlabel $1$ [tl] at 1041 36
  \pinlabel $1$ [tl] at 1106 100
  \endlabellist

  \includegraphics[width=6.5in]{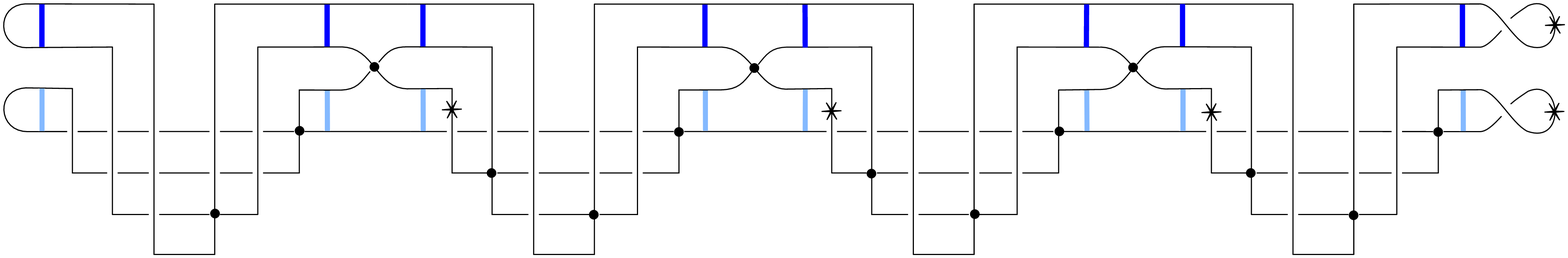}
  \caption{The top left diagram gives an augmentation of the right
    trefoil. The top right diagram gives the ruling and the bottom
    diagram gives the augmentation of the dipped diagram resulting
    from following the process of the proof. The dots denote that the
    crossing is augmented and the label on the dot gives where the
    augmentation sends the crossing. The $*$ gives the placement of
    the base points. All base points are sent to $-1$ by the
    augmentation. (In general, it may not be the case that all base
    points are sent to $-1$.)}
  \label{fig:trefoilEx}
\end{figure}

\subsection{Left cusps}
Let $\epsilon_0$ be the $\rho$-graded augmentation of the original
diagram.  We know the ruling must pair strand $2k$ with strand $2k-1$
for $1\leq k\leq m$ (where $m$ is the number of right cusps) at the
left end of the diagram. Now add a dip between the left cusps and
$c_1$. We must now extend $\epsilon_0$ to an augmentation $\epsilon_1$
of the new diagram. This will require successively extending the
augmentation $\epsilon'$ of the diagram before the Reidemeister II
move to the augmentation $\epsilon$ of the diagram after one of the
moves involved in constructing a dip. We will compute how the
augmentation $\epsilon_0$ changes as we complete each Reidemeister II
move in constructing the dip.

Consider the type II Reidemeister move which pushes strand $k$ over
strand $\ell$. We must consider the following when extending
$\epsilon'$, the augmentation before pushing strand $k$ over strand
$\ell$, to $\epsilon$, the augmentation of the resulting diagram.

\begin{enumerate}
\item We must choose $\epsilon'(e_2)$. In this case, choose
  $\epsilon'(e_2)=0$. Thus, equation \eqref{eqn:augComp} tells us
  \[\epsilon(b^0_{k\ell})=\epsilon'(e_2)=0.\]
\item By equation \eqref{eqn:augComp},
  \[\epsilon(a^0_{k\ell})=\epsilon'(v_{k\ell}),\]
  where
  \[\partial b^0_{k\ell}=a^0_{k\ell}+v_{k\ell}.\]
  From Figure \ref{fig:leftCusps}, we know $v_{k\ell}$ is a sum of
  words in $b^0_{ij}$ for $(i,j)<(k,\ell)$ and contains a $1$ if
  $(k,\ell)=(2r,2r-1)$ for some $1\leq r\leq m$. Since
  $\epsilon'(b^0_{ij})=0$ for all $(i,j)<(k,\ell)$, by step (1), we
  see that
  \[\epsilon(a^0_{k\ell})=\epsilon'(v_{k\ell})=\begin{cases}
    1&(k,\ell)=(2r,2r-1)\text{ for some }1\leq r\leq m\\
    0&\other.
  \end{cases}\]
\item We must now check whether any ``corrections'' need to be made to
  $\epsilon'$ to get $\epsilon$. In particular, whether there are any
  ``corrections'' which need to be made to $\epsilon'$ on the
  $a^0_{ij}$ generators with $(i,j)<(k,\ell)$ but $i-j\geq k-\ell$. As
  $\epsilon'(e_2)=0$, Lemma \ref{lem:3.2equiv} tells us there are no
  corrections.
\end{enumerate}

We must now check that the resulting augmentation is $\rho$-graded. We
know
\[\lvert
b^0_{2r,2r-1}\rvert=\mu(2r)-\mu(2r-1)=(\mu(2r-1)+1)-\mu(2r-1)=1\] for
$1\leq r\leq m$ and so
\[\lvert a^0_{2r,2r-1}\rvert=\lvert b^0_{2r,2r-1}\rvert-1=0\]
for $1\leq r\leq m$. So if $\epsilon'$ is $\rho$-graded, then
$\epsilon$ is also and clearly $\epsilon$ is an augmentation
satisfying Property (R).

\begin{figure}
  \labellist
  \small
  \pinlabel $b^0_{43}$ [tl] at 100 36
  \tiny
  \pinlabel $-$ [br] at 100 36
  \endlabellist
  \includegraphics[width=1.5in]{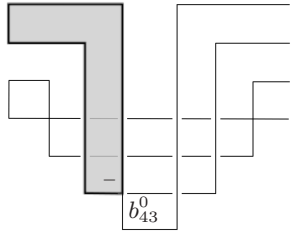}
  \caption{Shaded region gives the disk which contributes $1$ to
    $\partial b^0_{43}$.}
  \label{fig:leftCusps}
\end{figure}

\bigskip
\subsection{Extending across original crossings}
Consider the crossing $c_j$, the crossing of strands $i$ and $i+1$.
Let us extend the ruling across the crossing $c_j$ and use
$\epsilon_j$, the augmentation of the diagram with dips added up to
the crossing $c_j$, to define $\epsilon_{j+1}$, the diagram with dips
added up the crossing $c_{j+1}$. Note that $\epsilon_{j+1}$ will agree
with $\epsilon_j$ on crossings to the left of the dip added between
$c_j$ and $c_{j+1}$.

First we need to extend the ruling; extend the ruling across $c_j$ by
a switch if $\epsilon_j(c_j)\neq0$ and just to the left of $c_j$, the
ruling so far matches configuration (a), (b), or (c). Otherwise, there
is no switch. Let $1\leq L,K\leq n$ such that strand $i$ is paired
with strand $L$ and strand $i+1$ is paired with strand $K$ in
the ruling between $c_j$ and $c_{j+1}$.

We will now construct a dip between between $c_j$ and $c_{j+1}$, move
base points into place, and extend $\epsilon_j$ to an augmentation
$\epsilon_{j+1}$ in the process.

It will be useful to note that Table \ref{tab:augDisks} gives all
possibly totally augmented disks in the various configurations of the
ruling near crossings, up to base points.

\begin{table} 
  \caption{All possible totally augmented disks.}
  \label{tab:augDisks}
  \[\begin{array}{|l|l|l|}
    \hline\text{Configuration of $c_j$}&\text{Positive corner}&\text{Terms in }\partial\text{ corresp. to totally aug. disks up to base pts.}\\
    \hline&&\\
    \text{not augmented}& b^j_{rs},\,r,s\text{ paired, }r,s\notin\{i,i+1\}&a^{j-1}_{rs},a^j_{rs}\\
    &b^j_{\{i,L\}}&a^{j-1}_{\{i+1,L\}},a^j_{\{i,L\}}\\
    &b^j_{\{i+1,K\}}&a^{j-1}_{\{i,K\}},a^j_{\{i+1,K\}}\\[.1in]
    \hline&&\\

    \text{(a)}& b^j_{rs},\,r,s\text{ paired, }r,s\notin\{i,i+1\}&a^{j-1}_{rs},a^j_{rs}\\
    &b^j_{iL}&c_ja^{j-1}_{iL},a^j_{iL}\\
    &b^j_{i+1,L}&a^{j-1}_{iL},b^j_{i+1,i}a^j_{iL}\\
    &b^j_{Ki}&a^{j-1}_{K,i+1},a^{j-1}_{K,i+1}c_jb^j_{i+1,i}\\
    &b^j_{K,i+1}&a^j_{K,i+1}c_j,a^j_{K,i+1}\\[.1in]

    \hline&&\\
    \text{(b)}& b^j_{rs},\,r,s\text{ paired, }r,s\notin\{i,i+1\}&a^{j-1}_{rs},a^j_{rs}\\
    &b^j_{iK}&a^{j-1}_{i+1,K},c_ja^{j-1}_{iL}b^j_{LK}\\
    &b^j_{i+1,K}&a^{j-1}_{iL}b^j_{LK},a^j_{i+1,K}\\
    &b^j_{iL}&c_ja^{j-1}_{iL},a^j_{iL}\\
    &b^j_{i+1,L}&a^{j-1}_{iL},b^j_{i+1,i}a^j_{iL}\\[.1in]

    \hline&&\\
    \text{(c)}& b^j_{rs},\,r,s\text{ paired, }r,s\notin\{i,i+1\}&a^{j-1}_{rs},a^j_{rs}\\
    &b^j_{Ki}&a^{j-1}_{K,i+1},a^{j-1}_{K,i+1}c_jb^j_{i+1,i}\\
    &b^j_{Li}&a^{j-1}_{Li}b^j_{i+1,i},a^j_{Li}\\
    &b^j_{K,i+1}&a^{j-1}_{K,i+1}c_j,a^j_{K,i+1}\\
    &b^j_{L,i+1}&a^{j-1}_{Li},b^j_{LK}a^j_{K,i+1}\\[.1in]
    \hline&&\\

    \text{(d)}& b^j_{rs},\,r,s\text{ paired, }r,s\notin\{i,i+1\}&a^{j-1}_{rs},a^j_{rs}\\
    &b^j_{iL}&a^{j-1}_{i+1,L},a^j_{iL}\\
    &b^j_{K,i+1}&a^{j-1}_{Ki},a^j_{K,i+1}\\[.1in]
    \hline&&\\

    \text{(e)}& b^j_{rs},\,r,s\text{ paired, }r,s\notin\{i,i+1\}&a^{j-1}_{rs},a^j_{rs}\\
    &b^j_{iL}&a^{j-1}_{i+1,L},a^j_{iL}\\
    &b^j_{iK}&c_ja^{j-1}_{iK},a^{j-1}_{i+1,L}b^j_{LK}\\
    &b^j_{i+1,K}&a^{j-1}_{iK},a^j_{i+1,K}\\[.1in]
    \hline&&\\

    \text{(f)}& b^j_{rs},\,r,s\text{ paired, }r,s\notin\{i,i+1\}&a^{j-1}_{rs},a^j_{rs}\\
    &b^j_{Li}&a^{j-1}_{L,i+1},a^j_{Li}\\
    &b^j_{K,i+1}&a^{j-1}_{Ki},a^j_{K,i+1}\\
    &b^j_{L,i+1}&a^{j-1}_{Li}c_j,b^j_{LK}a^j_{K,i+1}\\[.1in]
    \hline

  \end{array}\]
\end{table}

\newpage Since the way we extend the ruling across $c_j$ depends on
$\epsilon_j(c_j)$ and the ruling immediately to the left of $c_j$, we
will need to consider when $\epsilon_j(c_j)=0$ and $\epsilon_j(c_j)\neq0$.

\underline{\bf (Case 1: $\epsilon_j(c_j)=0$)} In this case, extend the
ruling across $c_j$ without a switch. As with adding a dip between the
left cusps and $c_1$, we will compute how the augmentation $\epsilon'$
of the diagram before a Reidemeister II move changes to an
augmentation $\epsilon$ after each move involved in the making the
dip. Consider the type II move that pushes strand $k$ over strand
$\ell$. Let $\epsilon'$ be the augmentation on the diagram before the
move and let $\epsilon$ be the augmentation on the resulting diagram.
We will proceed as follows:
\begin{enumerate}
\item Define $\epsilon$ on the $b^j$-lattice.
\item Define $\epsilon$ on the $a^j$-lattice.
\item Make corrections to $\epsilon$ using Lemma \ref{lem:3.2equiv}.
\item Make corrections due to moving base points into place.
\end{enumerate}

Following this process, we have:
\begin{enumerate}
\item Choose $\epsilon'(e_2)=0$.
\item From equation \eqref{eqn:augComp}, we know
  \[\epsilon(a^j_{k\ell})=\epsilon'(v_{k\ell}).\]
  Since neither $c_j$ nor any crossing in the $b^j$-lattice is
  augmented, the only totally augmented disks in $v_{k\ell}$ have a
  positive corner at $b^j_{k\ell}$ and a single augmented negative
  corner in the $a^{j-1}$-lattice.

  \begin{figure}
    \labellist
    \tiny\hair 3pt
    \pinlabel $-$ [bl] at 207 37
    \pinlabel $-$ [bl] at 271 100
    \pinlabel $-$ [bl] at 338 164
    \pinlabel $+$ [br] at 485 164
    \pinlabel $+$ [br] at 517 132
    \pinlabel $+$ [br] at 581 37
    \endlabellist
    \includegraphics[width=4.5in]{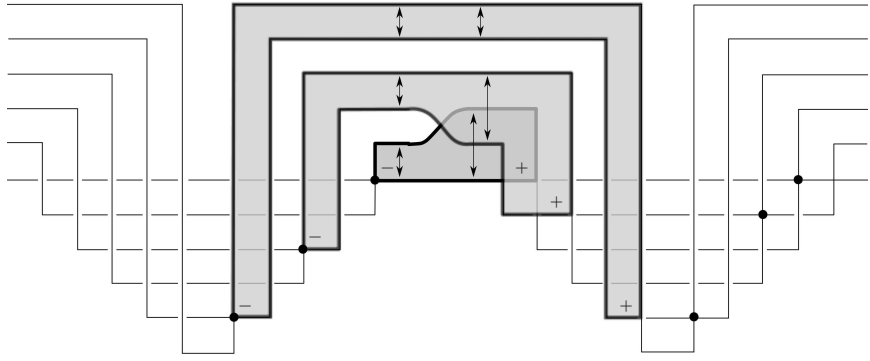}
    \caption{The disks with one negative corner in the
      $a^{k-1}$-lattice which contribute terms to the differential of
      crossings in the $b^k$-lattice if $\epsilon_j(c_j)=0$.}
    \label{fig:nonaugDisks}
  \end{figure}
 
  If such a disk exists, by Property (R), the negative corner in the
  $a^{j-1}$-lattice must be where two paired strands in the ruling
  cross as seen in Figure \ref{fig:nonaugDisks}. Since this is the
  only negative corner of the disk, we know $k$ and $\ell$ are paired
  in the ruling between $c_j$ and $c_{j+1}$ as well. So, if we recall that $a^j_{\{k,\ell\}}=a^j_{\max(k,\ell),\min(k,\ell)},$ then
  \begin{align*}
    \epsilon(a^j_{k\ell})&=\epsilon'(v_{k\ell})=\epsilon'(\epsilon(b^j_{k\ell};a^{j-1}_{k\ell})a^{j-1}_{k\ell})\\
    &=\begin{cases}
      \epsilon(a^{j-1}_{\{i,K\}})&(k,\ell)=\{i+1,K\}\\
      \epsilon(a^{j-1}_{\{i+1,L\}})&(k,\ell)=\{i,L\}\\
      \epsilon(a^{j-1}_{k\ell})&\text{ if }k,\ell\text{ paired and }k,\ell\neq i,i+1\\
      0&\other.
    \end{cases}
  \end{align*}

\item Since $\epsilon'(e_2)=0$, by Lemma \ref{lem:3.2equiv}, we know
  there are no ``corrections'' to $\epsilon(a^j_{rs})$ for
  $(r,s)<(k,\ell)$.
\item As there are no base points to move into place, no modifications
  to the augmentation are needed.
\end{enumerate}

We must now check that the resulting augmentation is
$\rho$-graded. Since $\epsilon'$ satisfies Property (R), we know
$a^{j-1}_{\{i,K\}}$, $a^{j-1}_{\{i+1,L\}}$, and $a^{j-1}_{k\ell}$ are
augmented if strands $k$ and $\ell$ are paired between $c_j$ and
$c_{j+1}$. Thus, if $\epsilon'$ is a $\rho$-graded augmentation, then
each has degree divisible by $\rho$. Since $\partial$ lowers degree by
one,
\[\lvert b^j_{\{i+1,K\}}\rvert=\lvert a^{j-1}_{\{i,K\}}\rvert+1,\quad\quad
  \lvert b^j_{\{i,L\}}\rvert=\lvert a^{j-1}_{\{i+1,L\}}\rvert+1,\quad\quad
  \lvert b^j_{k\ell}\rvert=\lvert a^{j-1}_{k\ell}\rvert+1\]
and since $\lvert a^j_{rs}\rvert=\lvert b^j_{rs}\rvert-1$,
\[\lvert a^j_{\{i+1,K\}}\rvert=\lvert a^{j-1}_{\{i,K\}}\rvert,\quad\quad
  \lvert a^j_{\{i,L\}}\rvert=\lvert a^{j-1}_{\{i+1,L\}}\rvert,\quad\quad
  \lvert a^j_{k\ell}\rvert=\lvert a^{j-1}_{k\ell}\rvert.\]
So $\epsilon$ is a $\rho$-graded augmentation satisfying Property (R)
if $\epsilon'$ is $\rho$-graded.

\bigskip
\underline{\bf (Case 2: $\epsilon_j(c_j)\neq0$)} Now suppose $c_j$ is
augmented. This breaks into six cases, one for each possible
configuration of $c_j$ seen in Figure \ref{fig:config}. In each case,
while creating the dip, we will extend the augmentation $\epsilon_j$
of the knot diagram before adding the dip between crossings $c_j$ and
$c_{j+1}$ over the dip, move the base points into place and modify the
augmentation accordingly to end up with an augmentation
$\epsilon_{j+1}$ of the modified diagram. As in the case where $c_j$
was not augmented, we will compute how the augmentation changes as we
do each Reidemeister II move involved in making a dip between $c_j$
and $c_{j+1}$.

{\bf Configuration (a):} By considering Figure \ref{fig:basepoints},
we see that if $c_j$ is a negative crossing, we add two base points at
the right cusp to the right on strand $i+1$ and move them along strand
$i+1$ to between $c_j$ and $c_{j+1}$, modifying the augmentation on
any crossings we push the base points over/under according to Remark
\ref{rmk:basepointMoving}. Note that as we are moving two base points
along the same strand, no modification of the augmentation is
necessary. If $c_j$ is a positive crossing, we add one base point on
strand $i$ and follow the same process, though, in this case,
modification of the augmentation by a factor of $-1$ on the crossings
we push the base point over/under is necessary by Remark
\ref{rmk:basepointMoving}. Note that whether $c_j$ is a positive or
negative crossing, one base point will be to the left of the dip we
are adding, and, if $c_j$ is a negative crossing, we will also have
one base point to the right.

Consider the Reidemeister II move where strand $k$ is pushed over
strand $\ell$. Let $\epsilon'$ be the augmentation on the diagram
before the move and let $\epsilon$ be the augmentation of the diagram
after. Note that by our strand labeling convention $L<i<i+1<K$.

As before, we must consider the following:

$(k,\ell)<(i+1,i)$:
\begin{enumerate}
\item Choose $\epsilon'(e_2)=0$.
\item We know $\epsilon(a^j_{k\ell})=\epsilon'(v_{k\ell}).$ If $k\neq
  i,i+1$, then Table \ref{tab:augDisks} tells us
  \[\epsilon'(v_{k\ell})=\epsilon'(\epsilon(b^j_{k\ell};a^{j-1}_{k\ell})a^{j-1}_{k\ell}).\]
  So, in this case, $v_{k\ell}$ has a totally augmented disk if and
  only if $\epsilon'(a^{j-1}_{k\ell})\neq0$ if and only if $k$ and
  $\ell$ are paired between $c_{j-1}$ and $c_{j+1}$ by Property
  (R). Otherwise $(k,\ell)=(i+1,L)$ or $(k,\ell)=(i,L)$. In these
  cases
  \begin{align*}
    \epsilon(a^j_{iL})&=\epsilon'(v_{iL})\\
    &=\begin{cases}
      \epsilon'(\epsilon(b^j_{iL};c_ja^{j-1}_{iL})c_ja^{j-1}_{iL})&c_j\text{ negative crossing}\\
      \epsilon'(\epsilon(b^j_{iL};c_ja^{j-1}_{iL})t_\alpha^{\pm1}c_ja^{j-1}_{iL})&c_j\text{
        positive crossing}
    \end{cases}\\
    &=\epsilon(c_ja^{j-1}_{iL})
  \end{align*}
  and
  \begin{align*}
    \epsilon(a^j_{i+1,L})&=\epsilon'(v_{i+1,L})\\
    &=\begin{cases}
      \epsilon'(\epsilon(b^j_{i+1,L};a^{j-1}_{iL})t_\alpha^{\pm1}a^{j-1}_{iL})&c_j\text{ negative crossing}\\
      \epsilon'(\epsilon(b^j_{i+1,L};a^{j-1}_{iL})a^{j-1}_{iL})&c_j\text{
        positive crossing}
    \end{cases}\\
    &=\begin{cases}
      -\epsilon(a^{j-1}_{iL})&c_j\text{ negative crossing}\\
      \epsilon(a^{j-1}_{iL})&c_j\text{ positive crossing}.
    \end{cases}
  \end{align*}

\item Since $\epsilon'(e_2)=0$ by Lemma \ref{lem:3.2equiv}, there are
  no ``corrections'' to the augmentation of the previously constructed
  portion of the $a^j$-lattice.
\item In the case where $c_j$ is a negative crossing, according to
  Figure \ref{fig:basepoints}, we move a base point over $a^j_{i+1,L}$
  to get
  \[\epsilon(a^j_{i+1,L})=
  \left\{\begin{array}{ll}
    -(-\epsilon(a^{j-1}_{iL}))&c_j\text{ negative crossing}\\
    \epsilon(a^{j-1}_{iL})&c_j\text{ positive crossing}
  \end{array}\right\}
  =\epsilon(a^{j-1}_{iL}).\]
  Note that we do not need to move the other base points as they are
  to the left of the dip and so no more modifications are necessary.
\end{enumerate}

\medskip $(k,\ell)=(i+1,i)$:
\begin{enumerate}
\item According to Figure \ref{fig:basepoints}, choose
  $\epsilon'(e_2)=(\epsilon(c_j))^{-1}$. Then
  $\epsilon(b^j_{i+1,i})=\epsilon'(e_2)=(\epsilon(c_j))^{-1}$.

\item From looking at Table \ref{tab:augDisks}, we see that
  $v_{i+1,i}=0$ and so $\epsilon(a^j_{i+1,i})=\epsilon'(v_{i+1,i})=0$.

\item As $\epsilon'(e_2)=1$ we need to check for ``corrections.''  In
  particular, the disk in Figure \ref{fig:correctionA} contributes the
  term $a^j_{i+1,i}a^j_{iL}$ to $\partial a^j_{i+1,L}$ and is the only
  disk with negative corner at $a^j_{i+1,i}$ whose other negative
  corners are augmented since $a^j_{iL}$ is the only crossing of
  strand $L$ which is augmented by Property (R). Thus Lemma
  \ref{lem:3.2equiv} tells us
  \begin{align*}
    \epsilon(a^j_{i+1,L})&=\begin{cases}
      \epsilon'(a^j_{i+1,L})-(-1)^{\lvert t_\alpha^{\pm1}\rvert}\epsilon(a^j_{i+i,L};a^j_{i+1,i}a^j_{iL})\epsilon(t_\alpha^{\pm1}b^j_{i+1,i}a^j_{iL})&c_j\text{ negative crossing}\\
      \epsilon'(a^j_{i+1,L})-(-1)^{\vert1\rvert}\epsilon(a^j_{i+i,L};a^j_{i+1,i}a^j_{iL})\epsilon(b^j_{i+1,i}a^j_{iL})&c_j\text{
        positive crossing}
    \end{cases}\\
    &=\begin{cases}
      \epsilon(a^{j-1}_{iL})+\epsilon(t_\alpha^{\pm1}b^j_{i+1,i}a^j_{iL})&c_j\text{ negative crossing}\\
      \epsilon(a^{j-1}_{iL})+\epsilon(b^j_{i+1,i}a^j_{iL})&c_j\text{
        positive crossing}
    \end{cases}\\
    &=\epsilon(a^{j-1}_{iL})-(\epsilon(c_j))^{-1}\epsilon(c_ja^{j-1}_{iL})\\
    &=0,
  \end{align*}
  where $t_\alpha$ is associated with the base point $*$, since
  \[\epsilon(a^j_{i+1,L};a^j_{i+1,i}a^j_{iL})=\begin{cases}
    -1&c_j\text{ negative crossing}\\
    1&c_j\text{ positive crossing}.
  \end{cases}\] Thus $\epsilon$ satisfies Property (R).

  \begin{figure}
    \labellist
    \small\hair 3pt
    \pinlabel $a_{i+1,L}$ [br] at 113 69
    \pinlabel $a_{iL}$ [bl] at 145 69
    \pinlabel $a_{i+1,i}$ [tl] at 114 36
    \tiny
    \pinlabel $+$ [tl] at 113 69
    \pinlabel $-$ [tr] at 145 69
    \pinlabel $-$ [bl] at 114 36
    \endlabellist
    \includegraphics[width=1.35in]{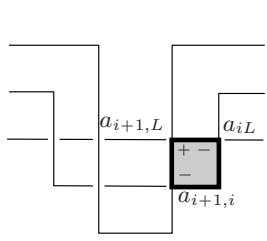}
    \caption{The disk contributing to $\partial a^j_{i+1,L}$, which
      requires ``correcting'' the augmentation. Crossings are labeled.}
    \label{fig:correctionA}
  \end{figure}

\item By Remark \ref{rmk:basepointMoving}, moving a base point over
  $a^j_{i+1,i}$ will not change the augmentation since
  $\epsilon(a^j_{i+1,i})=0$ in the case where $c_j$ is a negative
  crossing.
\end{enumerate}

\medskip $(k,\ell)>(i+1,i)$:
\begin{enumerate}
\item According to Figure \ref{fig:basepoints}, choose $\epsilon'(e_2)=0$.
\item As before, if neither strands $k$ nor $\ell$ is a crossing
  strand, then $a^j_{k\ell}$ is augmented if and only if $k$ and
  $\ell$ are paired in the ruling between $c_j$ and $c_{j+1}$. Note
  that this tells us the augmentation on the $a^j$-lattice is the same
  as the $a^{j-1}$-lattice. We do, however, see in Figure \ref{fig:aConfig}
  that there is one totally augmented disk in $v_{K,i+1}$ and two in
  $v_{Ki}$.

  \begin{figure}
    \labellist
    \small
    \pinlabel $a^{j-1}_{K,i+1}$ [tl] at 147 320
    \tiny
    \pinlabel $-$ [bl] at 147 320
    \small
    \pinlabel $c_j$ [t] at 271 433
    \tiny
    \pinlabel $-$ [b] at 271 435
    \small
    \pinlabel $b^j_{i+1,i}$ [tr] at 359 352
    \tiny
    \pinlabel $-$ [bl] at 359 352
    \pinlabel $+$ [br] at 392 352
    \small
    \pinlabel $b^j_{Ki}$ [tl] at 392 352

    \pinlabel $a^{j-1}_{K,i+1}$ [tl] at 810 320
    \tiny
    \pinlabel $-$ [bl] at 810 320
    \pinlabel $+$ [br] at 1058 354
    \small
    \pinlabel $b^j_{Ki}$ [tl] at 1058 356

    \pinlabel $a^{j-1}_{K,i+1}$ [tl] at 479 35
    \tiny
    \pinlabel $-$ [bl] at 479 35
    \small
    \pinlabel $c_j$ [t] at 604 147
    \tiny
    \pinlabel $-$ [b] at 604 149
    \pinlabel $+$ [br] at 726 35
    \small
    \pinlabel $b^j_{K,i+1}$ [tl] at 726 35
    \endlabellist

    \includegraphics[width=6in]{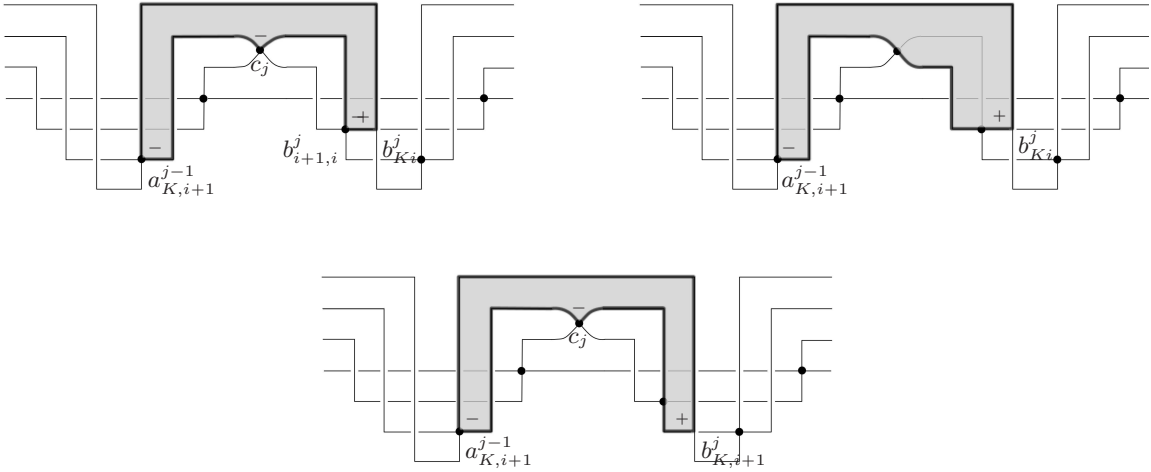}
    \caption{Totally augmented disks with one negative corner in the
      $a^{j-1}$-lattice contributing to the differential of crossings
      in the $b^j$-lattice. The crossings at corners of the disks are
      labeled.}
    \label{fig:aConfig}
  \end{figure}

  Thus
  \begin{align*}
    \epsilon(a^j_{Ki})&=\epsilon'(v_{Ki})\\
    &=\begin{cases}
      \epsilon'(\epsilon(b^j_{Ki};a^{j-1}_{K,i+1})a^{j-1}_{K,i+1})+\epsilon'(\epsilon(b^j_{Ki};a^{j-1}_{K,i+1}c_jb^j_{i+1,i})a^{j-1}_{K,i+1}c_jt_\alpha^{\pm1}b^j_{i+1,i})&c_j\text{ negative crossing}\\
      \epsilon'(\epsilon(b^j_{Ki};a^{j-1}_{K,i+1})a^{j-1}_{K,i+1}t_\alpha^{\pm1})+\epsilon'(\epsilon(b^j_{Ki};a^{j-1}_{K,i+1}c_jb^j_{i+1,i})a^{j-1}_{K,i+1}c_jb^j_{i+1,i})&c_j\text{
        positive crossing}
    \end{cases}\\
    &=\begin{cases}
      \epsilon(a^{j-1}_{K,i+1})-\epsilon(a^{j-1}_{K,i+1}c_jb^j_{i+1,i})&c_j\text{ negative crossing}\\
      -\epsilon(a^{j-1}_{K,i+1})+\epsilon(a^{j-1}_{K,i+1}c_jb^j_{i+1,i})&c_j\text{
        positive crossing}
    \end{cases}\\
    &=0,
  \end{align*}
  since
  \[\epsilon(a^j_{K,i+1}c_jb^j_{i+1,i})=\epsilon(a^{j-1}_{K,i+1}c_j)(\epsilon(c_j))^{-1}=\epsilon(a^{j-1}_{K,i+1}).\]
  And,
  \begin{align*}
    \epsilon(a^j_{K,i+1})&=\epsilon'(v_{K,i+1})\\
    &=\begin{cases}
      \epsilon'(\epsilon(b^j_{K,i+1};a^{j-1}_{K,i+1}c_j)a^{j-1}_{K,i+1}c_jt_\alpha^{\pm1}t_\beta^{\pm1})&c_j\text{ negative crossing}\\
      \epsilon'(\epsilon(b^j_{K,i+1};a^{j-1}_{K,i+1}c_j)a^{j-1}_{K,i+1}c_j)&c_j\text{
        positive crossing}
    \end{cases}\\
    &=\epsilon(a^{j-1}_{K,i+1}c_j).
  \end{align*}

\item Since $\epsilon'(e_2)=0$, by Lemma \ref{lem:3.2equiv}, no
  ``corrections.''
\item By Figure \ref{fig:basepoints}, no base points to move.
\end{enumerate}

If $\epsilon'$ is a $\rho$-graded augmentation, then
$\rho\big\vert\lvert c_j\rvert$ since $c_j$ is augmented. Thus, the
ruling is $\rho$-graded so far. We see that $\lvert
b^j_{i+1,i}\rvert=\mu(i+1)-\mu(i)=\lvert c_j\rvert$ and, since
$\partial$ lowers degree by one,
\begin{align*}
  &\lvert a^j_{K,i+1}\rvert=\lvert b^j_{K,i+1}\rvert-1=\lvert a^{j-1}_{K,i+1}\rvert\\
  &\lvert a^j_{iL}\rvert=\lvert b^j_{iL}\rvert-1=\lvert
  a^{j-1}_{iL}\rvert.
\end{align*}
As in the nonaugmentated case, if strands $k$ and $\ell$ are paired in
the ruling between $c_{j-1}$ and $c_{j+1}$, then $a^{j-1}_{k\ell}$ is
augmented and $\lvert a^j_{k\ell}\rvert=\lvert
a^{j-1}_{k\ell}\rvert$. So $\epsilon$ is a $\rho$-graded augmentation
which satisfies Property (R).

\bigskip
{\bf Configuration (b):} Now suppose the ruling has configuration (b)
near $c_j$. Note that with our strand assignments
$i+1>i>L>K$. According to Figure \ref{fig:basepoints}, if $c_j$ is a
negative crossing, then follow strand $K$ to the right to a right cusp
and add a base point and follow strand $i+1$ to the right to a right
cusp and add two base points. Move these base points back along their
respective strands to between $c_j$ and $c_{j+1}$, modifying the
augmentation according to Remark \ref{rmk:basepointMoving}. If $c_j$
is a positive crossing, then follow strand $i$ to the right to a right
cusp, add a base point, and move it back to between $c_j$ and
$c_{j+1}$, modifying the augmentation as necessary.

As before, we will compute how the augmentation $\epsilon_j$ changes
as we complete Reidemeister II moves involved in the construction of a
dip, to yield the extended augmentation $\epsilon_{j+1}$.

Consider the augmentation $\epsilon$ extension of the augmentation
$\epsilon'$ where strand $k$ is pushed over strand $\ell$ in the
creation of a dip between $c_j$ and $c_{j+1}$.

$(k,\ell)<(L,K)$: This case follows in the way of the first case of
configuration (a) so that setting $\epsilon'(e_2)=0$, we transfer the
augmentation on the $a^{j-1}$-lattice to that $a^j$-lattice.

\medskip $(k,\ell)=(L,K)$:
\begin{enumerate}
\item According to Figure \ref{fig:basepoints}, set
  $\epsilon'(e_2)=(\epsilon(c_ja^{j-1}_{iL}))^{-1}\epsilon(a^{j-1}_{i+1,K})$
  to obtain
  $\epsilon(b^j_{LK})=\epsilon'(e_2)=(\epsilon(c_ja^{j-1}_{iL}))^{-1}\epsilon(a^{j-1}_{i+1,K})$.
\item We see that $\epsilon'(v_{LK})=0$, since $K$ and $L$ are neither
  paired nor crossing strands in the ruling between $c_j$ and
  $c_{j+1}$. Thus
  \[\epsilon(a^j_{LK})=\epsilon'(v_{LK})=0.\]
\item There are no ``corrections'' as any disk in the $a_j$-lattice
  with negative corner at $a^j_{LK}$ must have an augmented negative
  corner of the form $a^j_{L*}$, but strand $L$ is paired with strand
  $i$ in the ruling between $c_j$ and $c_{j+1}$, so the only such
  crossing has not been made in the dip yet.
\item No base points to move, so no corrections.
\end{enumerate}

\medskip $(L,K)<(k,\ell)<(i+1,i)$:
\begin{enumerate}
\item According to Figure \ref{fig:basepoints}, set $\epsilon'(e_2)=0$.
\item In Figure \ref{fig:bConfig}, we see all the totally augmented disks
  contributing to $v_{k\ell}$ in $\partial b^j_{k\ell}$.

  \begin{figure}
    \labellist
    \small\hair 1pt
    \pinlabel $a^{j-1}_{i+1,K}$ [tr] at 146 704
    \tiny
    \pinlabel $-$ [bl] at 148 706
    \small
    \pinlabel $b^j_{iK}$ [tl] at 361 704
    \tiny
    \pinlabel $+$ [br] at 359 706

    \small
    \pinlabel $a^{j-1}_{iL}$ [tr] at 889 671
    \tiny
    \pinlabel $-$ [bl] at 891 673
    \pinlabel $-$ [t] at 983 785
    \small
    \pinlabel $c_j$ [b] at 981 788
    \pinlabel $b^j_{LK}$ [tr] at 1039 704
    \tiny
    \pinlabel $-$ [bl] at 1041 706
    \small
    \pinlabel $b^j_{iK}$ [tl] at 1070 704
    \tiny
    \pinlabel $+$ [br] at 1071 708

    \small
    \pinlabel $a^{j-1}_{iL}$ [tr] at 180 361
    \tiny
    \pinlabel $-$ [bl] at 182 363
    \small
    \pinlabel $c_j$ [b] at 272 475
    \tiny
    \pinlabel $-$ [t] at 272 473
    \small
    \pinlabel $b^j_{iL}$ [tl] at 361 361
    \tiny
    \pinlabel $+$ [br] at 361 363

    \small
    \pinlabel $a^{j-1}_{iL}$ [tr] at 889 361
    \tiny
    \pinlabel $-$ [bl] at 891 363
    \small
    \pinlabel $b^j_{LK}$ [tr] at 1040 391
    \tiny
    \pinlabel $-$ [bl] at 1042 393
    \small
    \pinlabel $b^j_{i+1,K}$ [tl] at 1102 392
    \tiny
    \pinlabel $+$ [br] at 1102 394

    \small
    \pinlabel $a^{j-1}_{iL}$ [tr] at 534 66
    \tiny
    \pinlabel $-$ [bl] at 536 68
    \small
    \pinlabel $b^j_{i+1,L}$ [tl] at 747 66
    \tiny
    \pinlabel $+$ [br] at 747 68

    \endlabellist

    \includegraphics[width=6.5in]{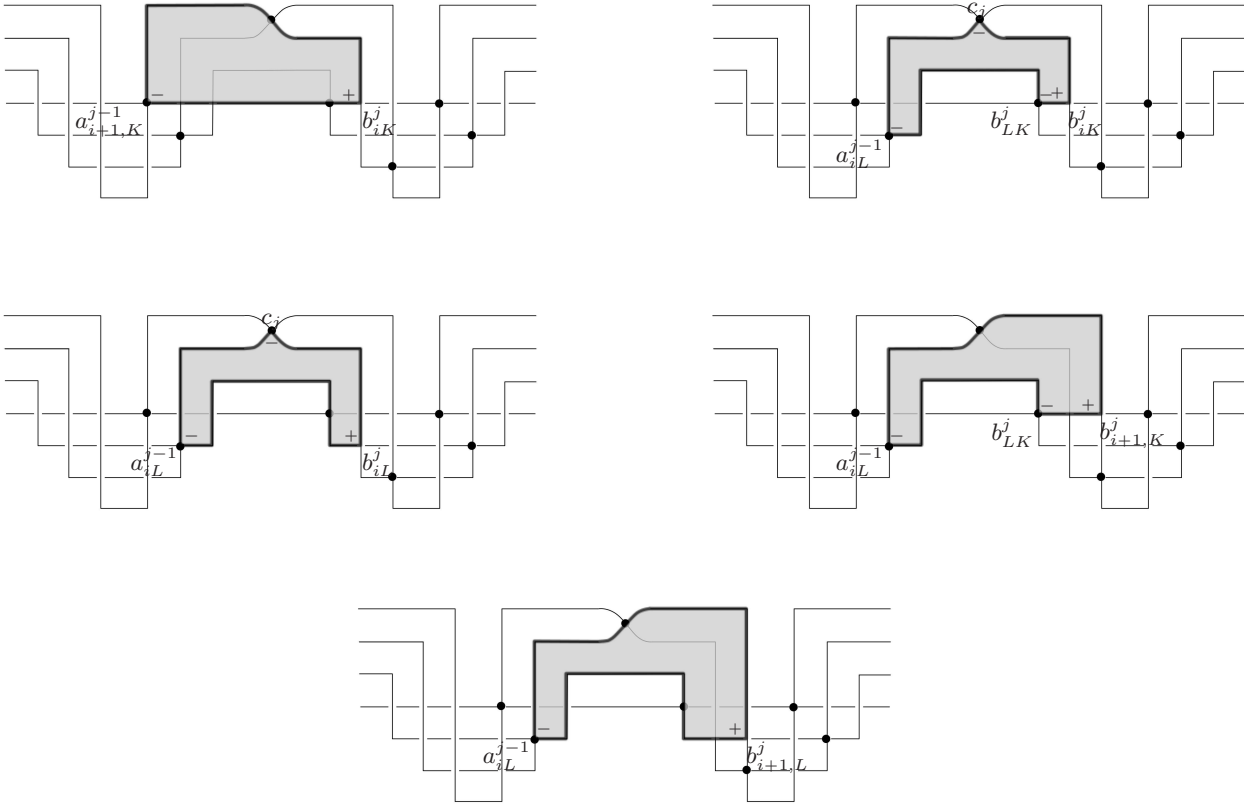}
    \caption{Totally augmented disks with one negative corner in the
      $b^{j-1}$-lattice which contribute to the differential of a
      crossing in the $a^j$-lattice. All crossings at corners of disks
      are labeled.}
    \label{fig:bConfig}
  \end{figure}

  Therefore
  \begin{align*}
    \epsilon(a^j_{iK})&=\epsilon'(v_{iK})\\
    &=\begin{cases}
      \epsilon'(\epsilon(b^j_{iK};a^{j-1}_{i+1,K})a^{j-1}_{i+1,K}t_\alpha^{\pm1})+\epsilon'(\epsilon(b^j_{iK};c_ja^{j-1}_{iL}b^j_{LK})c_ja^{j-1}_{iL}b^j_{LK})&c_j\text{ negative crossing}\\
      \epsilon'(\epsilon(b^j_{iK};a^{j-1}_{i+1,K})a^{j-1}_{i+1,K})+\epsilon'(\epsilon(b^j_{iK};c_ja^{j-1}_{iL}b^j_{LK})c_ja^{j-1}_{iL}b^j_{LK})&c_j\text{
        positive crossing}
    \end{cases}\\
    &=\begin{cases}
      -\epsilon(a^{j-1}_{i+1,K})+\epsilon(c_ja^{j-1}_{iL}b^j_{LK})&c_j\text{ negative crossing}\\
      -\epsilon(a^{j-1}_{i+1,K})-\epsilon(c_ja^{j-1}_{iL}b^j_{LK})&c_j\text{
        positive crossing}
    \end{cases}\\
    &=0,
  \end{align*}
  since
  \[\epsilon(c_ja^{j-1}_{iL}b^j_{LK})=\epsilon(c_ja^{j-1}_{iL})(\epsilon(c_ja^{j-1}_{iL}))^{-1}\epsilon(a^{j-1}_{i+1,K})=\epsilon(a^{j-1}_{i+1,K}).\]
   
  We also have
   \begin{align*}
     \epsilon(a^j_{iL})&=\epsilon'(v_{iK})=\epsilon'(\epsilon(b^j_{iL};c_ja^{j-1}_{iL})c_ja^{j-1}_{iL})\\
     &=\begin{cases}
       \epsilon(c_ja^{j-1}_{iL})&c_j\text{ negative crossing}\\
       -\epsilon(c_ja^{j-1}_{iL})&c_j\text{ positive crossing},
     \end{cases}\\[.2in]
     \epsilon(a^j_{i+1,K})&=\epsilon'(v_{i+1,K})=\epsilon'(\epsilon(b^j_{i+1,K};a^{j-1}_{iL}b^j_{LK})c_ja^{j-1}_{iL})\\
     &=\epsilon(a^{j-1}_{iL}b^j_{LK})\\
     &=\epsilon(a^{j-1}_{iL})(\epsilon(c_ja^{j-1}_{iL}))^{-1}\epsilon(a^j_{i+1,K}),\\[.2in]
     \epsilon(a^j_{i+1,L})&=\epsilon'(v_{i+1,L})=\epsilon'(\epsilon(b^j_{i+1,L};a^{j-1}_{iL})a^{j-1}_{iL})=\epsilon(a^{j-1}_{iL}).
   \end{align*}

 \item Since $\epsilon'(e_2)=0$, by Lemma \ref{lem:3.2equiv}, there
   are no ``corrections.''
 \item Note that if $c_j$ is a negative crossing, according to Figure
   \ref{fig:basepoints}, we need to move two base points over
   $a^j_{i+1,K}$ and $a^j_{i+1,L}$, so no changes. However, if $c_j$
   is a positive crossing, then we need to move one base point over
   $a^j_{iK}$ and $a^j_{iL}$ to give $\epsilon(a^j_{iK})=0$ and
   \[\epsilon(a^j_{iL})=
   \left\{\begin{array}{ll}
     \epsilon(c_ja^{j-1}_{iL})&c_j\text{ negative crossing}\\
     -(-\epsilon(c_ja^{j-1}_{iL}))&c_j\text{ positive crossing}
   \end{array}\right\}
   =\epsilon(c_ja^{j-1}_{iL}).\]
  
 \end{enumerate}

 \medskip $(k,\ell)=(i,i+1)$:
 \begin{enumerate}
 \item According to Figure \ref{fig:basepoints}, set
   $\epsilon'(e_2)=(\epsilon(c_j))^{-1}$ and so
   $\epsilon(b^j_{i+1,i})=(\epsilon(c_j))^{-1}$.
 \item As before,
   $\epsilon(a^j_{i+1,i})=\epsilon'(v_{i+1,i})=\epsilon'(0)=0$.
 \item We do have one correction: the disk $a^j_{i+1,i}a^j_{iL}$ in
   $\partial a^j_{i+1,L}$. Lemma \ref{lem:3.2equiv} tells us
   \begin{align*}
     \epsilon(a^j_{i+1,L})&=\begin{cases}
       \epsilon'(a^j_{i+1,L})-(-1)^{\lvert t_\alpha^{\pm1}\rvert}\epsilon(a^j_{i+1,L};a^j_{i+1,i}a^j_{iL})\epsilon(t)_\alpha^{\pm1}b^j_{i+1,i}a^j_{iL})&c_j\text{ negative crossing}\\
       \epsilon'(a^j_{i+1,L})-(-1)^{\lvert1\rvert}\epsilon(a^j_{i+1,L};a^j_{i+1,i}a^j_{iL})\epsilon(b^j_{i+1,i}a^j_{iL})&c_j\text{
         positive crossing}
     \end{cases}\\
     &\begin{cases}
       \epsilon(a^{j-1}_{iL})+\epsilon(t_\alpha^{\pm1}b^j_{i+1,i}a^j_{iL})&c_j\text{ negative crossing}\\
       \epsilon(a^{j-1}_{iL})-\epsilon(b^j_{i+1,i}a^j_{iL})&c_j\text{
         positive crossing}
     \end{cases}\\
     &=0,
   \end{align*}
   since
   \[\epsilon(b^j_{i+1,i}a^j_{iL})=(\epsilon(c_j))^{-1}\epsilon(c_ja^{j-1}_{iL})=\epsilon(a^{j-1}_{iL}).\]

 \item As $\epsilon(a^j_{i+1,i})=0$, no corrections are needed when
   moving the base point $*_\alpha$ over $a^j_{i+1,i}$.
 \end{enumerate}

 If $\epsilon'$ is a $\rho$-graded augmentation, then
 $\rho\big\vert\lvert c\rvert$ for all augmented crossings $c$. We see
 that $\lvert b^j_{i+1,i}\rvert=\mu(i+1)-\mu(i)=\lvert c_j\rvert$ and,
 since $\partial$ lowers degree by one,
 \begin{align*}
   \lvert b^j_{LK}\rvert&=\mu(L)-\mu(K)\\
   &=\mu(L)-\mu(i)+\mu(i)-\mu(i+1)+\mu(i+1)-\mu(K)\\
   &=-\lvert a^{j-1}_{iL}\rvert-\lvert c_j\rvert+\lvert
   a^{j-1}_{i+1,K}\rvert,\\[.1in]
   \lvert a^j_{i+1,K}\rvert&=\lvert b^j_{i+1,K}\rvert-1=\lvert a^{j-1}_{i+1,K}\rvert,\\[.1in]
   \lvert a^j_{iL}\rvert&=\lvert b^j_{iL}\rvert-1=\lvert
   a^{j-1}_{iL}\rvert.
 \end{align*}
 Since $\epsilon'$ satisfies Property (R) on the $a^{j-1}$-lattice, we
 know $\epsilon$ is a $\rho$-graded augmentation which satisfies
 Property (R). In fact, $\epsilon$ is just $a^j_{i+1,i}, a^j_{iL}$
 augmented with the rest of the augmentation on the $a^j$-lattice
 transferred from the $a^{j-1}$-lattice.

 {\bf Configuration (c), (d), (e), (f):} Similarly, one can extend
 $\epsilon_j$ over a crossing $c_j$ with the ruling having
 configuration (c), (d), (e), or (f) near $c_j$ to an augmentation
 $\epsilon_{j+1}$ satisfying Property (R) by defining it on new
 crossings as specified in Figure \ref{fig:basepoints}. We omit the
 calculations.

\subsection{Right cusps}
By construction and Lemma \ref{lem:3.2equiv}, $\epsilon=\epsilon_n$ is
an augmentation. In this section, we will show that we do in fact have
a ruling. Recall that $q_1,\ldots,q_m$ are the crossings at the right
cusps numbered from top to top. Then
\[\partial q_k=t_k^{\pm1}+a^n_{2m-2k+2,2m-2k+1}\]
for $1\leq k\leq m$, where the power of $t_k$ is determined by the
orientation of the knot at the right cusp, since strands $2m-2k+2$ and
$2m-2k+1$ are incident to the $k$-th right cusps from the
bottom. Since $\epsilon$ is an augmentation,
\begin{align*}
  0&=\epsilon\partial q_k\\
  &=\epsilon(t_k^{\pm1}+a^n_{2m-2k+2,2m-2k+1})\\
  &=(\epsilon(t_k))^{\pm1}+\epsilon(a^n_{2m-2k+2,2m-2k+1}).
\end{align*}
Since $0\neq\epsilon(t)=\prod_{i=1}^s\epsilon(t_i)$,
\[\epsilon(a^n_{2m-2k+2,2m-2k+1})=-(\epsilon(t_k))^{\pm1}\neq0.\]
Since $\epsilon$ satisfies Property (R), this tells us strands
$2m-2k+2$ and $2m-2k+1$ are paired at the right cusps for all $1\leq
k\leq m$ and so this construction does give a ruling.

This concludes the proof of the forward direction of Theorem
\ref{thm:main}. This construction also gives restrictions on
$\epsilon(t)$ for any augmentation $\epsilon$. In particular, the
final statement in Theorem \ref{thm:main}:

\begin{thm}\label{thm:rhoEven}
  If $\rho$ is even with $\rho\big\vert2r(K)$, then any $\rho$-graded
  augmentation $\epsilon$ satisfies $\epsilon(t)=-1$.
\end{thm}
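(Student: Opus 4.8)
The plan is to read $\epsilon(t)$ off from the augmentation $\epsilon=\epsilon_n$ of the fully dipped diagram produced by the construction of \S\ref{sec:augRuling}, using that base-point moves and additions do not change $\epsilon(t)$. After the moves of \S\ref{sec:basepoints} we may assume one base point $*_k$ on each of the $m$ right cusps, so that $\epsilon(t)=\prod_{k=1}^m\epsilon(t_k)$; the construction introduces further base points $*_\alpha$, each of which we have set to $\epsilon_n(t_\alpha)=-1$, and Theorems \ref{thm:2.20equiv} and \ref{thm:2.21equiv} give $\epsilon(t)=\prod_j\epsilon_n(t_j)$, the product now ranging over every base point of the dipped diagram. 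So it suffices to prove that each surviving original base point also has $\epsilon_n(t_k)=-1$, and then to count base points.

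For the first task I would use the right-cusp computation already made: from $\partial q_k=t_k^{\pm1}+a^n_{2m-2k+2,2m-2k+1}$ and $\epsilon_n\circ\partial=0$ one gets $\epsilon_n(a^n_{2m-2k+2,2m-2k+1})=-(\epsilon_n(t_k))^{\pm1}$, so it is enough to show that $\epsilon_n$ takes the value $1$ on every ``paired-strand'' crossing $a^n_{rs}$ at the far right. I would prove, by the left-to-right induction already running in \S\ref{sec:augRuling}, that $\epsilon$ takes the value $1$ on every paired-strand crossing of every $a^j$-lattice: this holds at the left cusps (where $\epsilon(a^0_{2r,2r-1})=1$ was computed), and the transfer identities across each $c_j$ — the displayed formulas in the case $\epsilon_j(c_j)=0$ and in configurations (a)--(f), which express the new $a^j$-lattice values in terms of the $a^{j-1}$-lattice via \eqref{eqn:augComp}, Lemma \ref{lem:3.2equiv}, and the base-point moves of Remark \ref{rmk:basepointMoving} — carry this value along unchanged. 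This is the step where $\rho$ even is essential: when $\rho$ is even every augmented generator has even degree, so in each totally augmented disk $Q_paR_p$ the word $Q_p$ has even degree, the correction sign $(-1)^{|\Phi(Q_p)|}$ of Lemma \ref{lem:3.2equiv} equals $+1$, and the (omitted) computations for configurations (c)--(f) reduce to sign-free equalities; the base points prescribed by Figure \ref{fig:basepoints} then exactly cancel the remaining orientation signs, and in particular no free multiplicative parameter can enter — which is precisely what fails for $\rho$ odd and accounts for the non-oriented rulings and the value $-x^2$ in Theorem \ref{thm:rhoOdd}. Hence $\epsilon_n(a^n_{2m-2k+2,2m-2k+1})=1$, so $\epsilon_n(t_k)=-1$ for all $k$.

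Putting the pieces together, $\epsilon(t)=\prod_j\epsilon_n(t_j)=(-1)^N$, where $N$ is the total number of base points of the dipped diagram; since $N$ is odd by Lemma \ref{lem:oddNumBasepts}, this gives $\epsilon(t)=-1$. I expect the main obstacle to lie in the middle paragraph: faithfully propagating the value $1$ on the paired-strand $a$-lattice crossings through all six switched configurations of Figure \ref{fig:config}, and checking that every grading-forced sign is trivial exactly because $\rho$ is even — that verification is the real content, whereas the base-point product identity, the right-cusp relation, and the parity count of Lemma \ref{lem:oddNumBasepts} are bookkeeping already in hand.
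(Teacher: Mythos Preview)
There is a genuine gap. Your inductive claim that $\epsilon$ takes the value $1$ on every paired-strand crossing of every $a^j$-lattice is false: in configuration (a), the displayed computations of \S\ref{sec:augRuling} (and Figure~\ref{fig:basepoints}) give $\epsilon(a^j_{K,i+1})=\epsilon(c_j)\epsilon(a^{j-1}_{K,i+1})$ and $\epsilon(a^j_{iL})=\epsilon(c_j)\epsilon(a^{j-1}_{iL})$, so both paired values pick up the factor $\epsilon(c_j)$, which is an arbitrary nonzero element of $F$ fixed by the \emph{given} augmentation. The base points you invoke are all sent to $-1$ and can only flip signs; they cannot cancel this multiplicative parameter. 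Figure~\ref{fig:trefoilEx} already exhibits paired $a$-lattice values of $-\tfrac12$, $2$, etc. Your use of ``$\rho$ even'' to force $(-1)^{\lvert\Phi(Q_p)\rvert}=+1$ is correct but beside the point: it kills sign discrepancies, not the $\epsilon(c_j)$'s.

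The paper repairs exactly this by tracking not the individual values but a signed product. The key observation you are missing is that $\rho$ even forces every switched crossing to be a \emph{positive} crossing (paired strands are oppositely oriented), so only $+$(a), $+$(b), $+$(c) occur. One then assigns to each ruled pair an orientation sign $\s(r,k)=\pm1$ and proves $\prod(\epsilon(a^k_{rs}))^{\s(r,k)}=1$ by induction; at a $+$(a) switch one has $\s(K,k)=-\s(i,k)$, so the two new factors of $\epsilon(c_k)$ enter with opposite exponents and cancel. The right-cusp relation then reads $\epsilon(t_\ell)=-(\epsilon(a^n_{\cdots}))^{\s(\cdots)}$, and the product over $\ell$ collapses to $(-1)^s$ as you intended. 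So your outer scaffolding (base-point product, right-cusp relation, Lemma~\ref{lem:oddNumBasepts}) is right, but the invariant you need to propagate is the oriented product, not the constant value $1$.
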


\begin{proof}
  Consider the associated $\rho$-graded ruling. If $\rho$ is even,
  then any $\rho$-graded ruling is only switched at crossings $c_k$
  with $\rho\big\vert\lvert c_k\rvert$ and so $2\big\vert\lvert
  c_k\rvert$. Thus any paired strands in the ruling have opposite
  orientation. If strand $i$ is oriented to the right, we assign that
  portion of the ruling, the sign $+1$ and if it is instead oriented
  to the left, we assign $-1$. Define $\s(i,k)$ to be the sign for
  strands $i>j$ paired in the ruling between $c_k$ and $c_{k+1}$. Note
  that this sign can only change going over a switched crossing.

  For example, if we have the trefoil with the orientation given in
  Figure \ref{fig:orientedTrefoil}, then
  \[\begin{array}{l||cc|cc|cc|cc}
    k&0&0&1&1&2&2&3&3\\\hline
    i&4&2&4&2&4&3&4&2\\\hline
    \s(i,k)&+1&-1&+1&-1&+1&-1&+1&-1
  \end{array}\]

  \begin{figure}
    \includegraphics[width=2.5in]{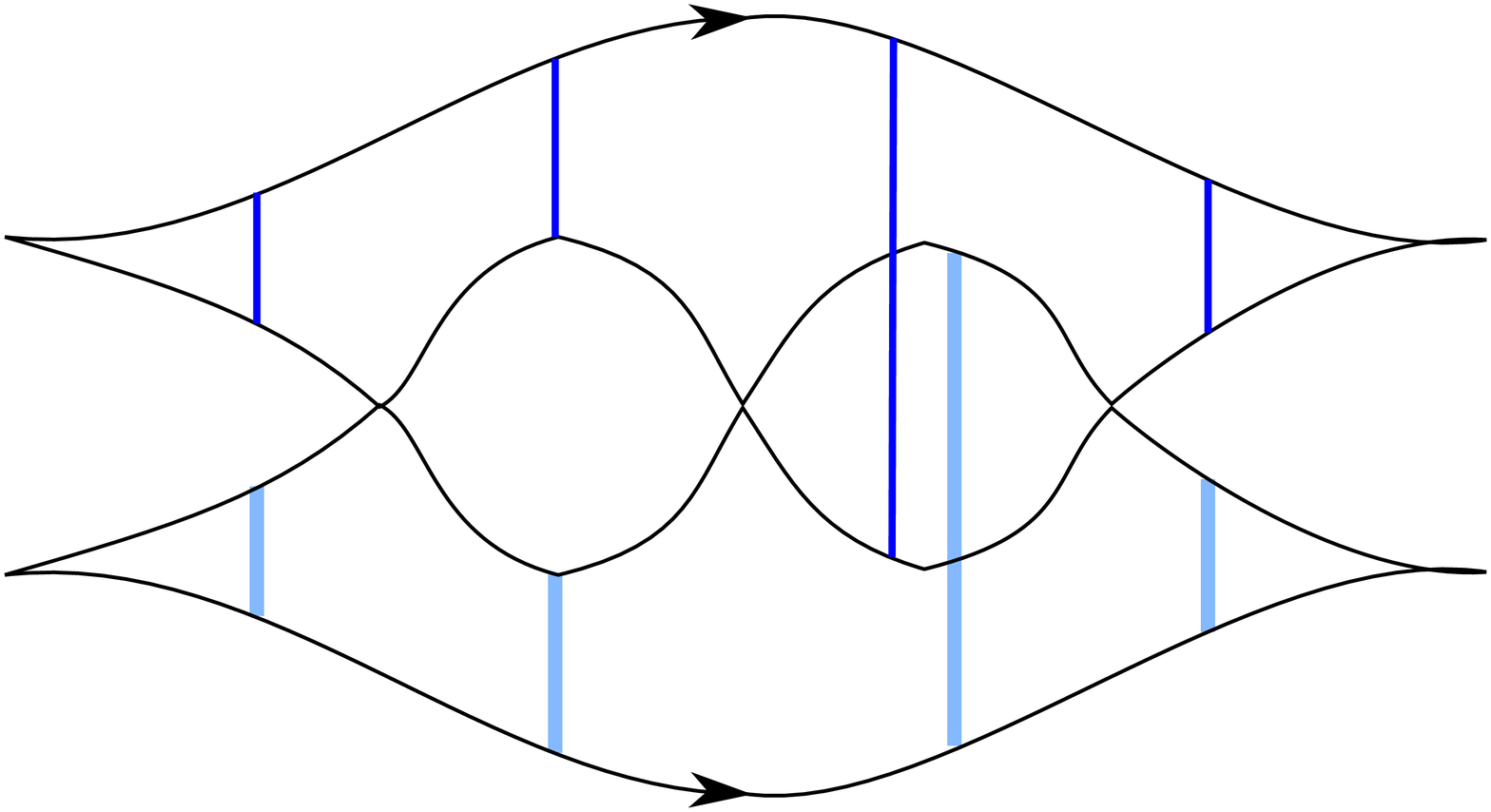}
    \caption{Oriented right handed trefoil with a normal graded ruling
      indicated.}
    \label{fig:orientedTrefoil}
  \end{figure}

  Given a $\rho$-graded ruling with $\rho$ even, we also see that we
  cannot have switched crossings which are negative crossings. So all
  switched crossings have one of the configurations appearing in
  Figure \ref{fig:abcConfig}.

  \begin{figure}
    \vspace{.2in}
    \labellist
    \small
    \pinlabel {$+$(a)} [b] at 81 170
    \pinlabel {$\s(k,K)=-1$} [t] at 81 -10
    \pinlabel {$\s(k,i)=+1$} [t] at 81 -50
    \pinlabel {$+$(a)} [b] at 297 170
    \pinlabel {$\s(k,K)=+1$} [t] at 297 -10
    \pinlabel {$\s(k,i)=-1$} [t] at 297 -50

    \pinlabel {$+$(b)} [b] at 515 170
    \pinlabel {$\s(k,i+1)=+1$} [t] at 515 -10
    \pinlabel {$\s(k,i)=+1$} [t] at 515 -50
    \pinlabel {$+$(b)} [b] at 737 170
    \pinlabel {$\s(k,i+1)=-1$} [t] at 737 -10
    \pinlabel {$\s(k,i)=-1$} [t] at 737 -50

    \pinlabel {$+$(c)} [b] at 959 170
    \pinlabel {$\s(k,L)=-1$} [t] at 959 -10
    \pinlabel {$\s(k,K)=-1$} [t] at 959 -50
    \pinlabel {$+$(c)} [b] at 1179 170
    \pinlabel {$\s(k,L)=+1$} [t] at 1179 -10
    \pinlabel {$\s(k,K)=+1$} [t] at 1179 -50
    \endlabellist

    \includegraphics[width=6in]{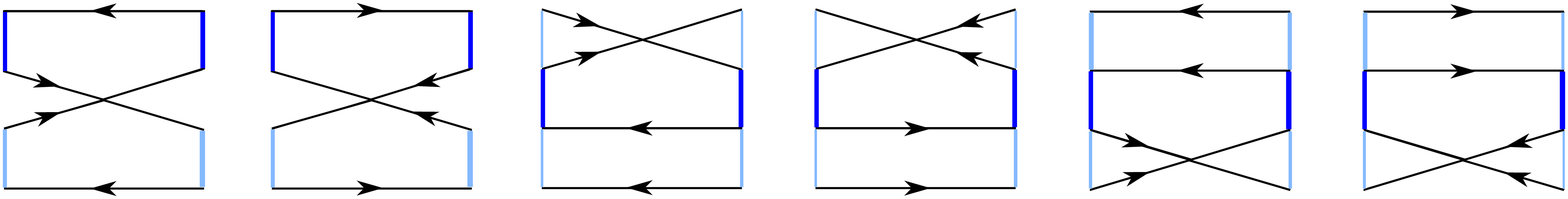}
    \vspace{.35in}
    \caption{All possible ruling configurations and orientations near
      a crossing which is switched in a $\rho$-ruling when $\rho$ is
      even with signs of ruled pairs given with our strand labeling
      convention.}
    \label{fig:abcConfig}
  \end{figure}

  Note that in these switch configurations the signs of ruling pairs
  do not change. Thus, each ruling path is an oriented unknot. The
  important part of this is that if a ruling pair has sign $+1$,
  respectively $-1$, at the left cusp, then it has sign $+1$,
  respectively $-1$, at the right cusp.

  We will show that for any $k$ such that $0\leq k\leq n$,
  \begin{equation}\prod(\epsilon(a^k_{rs}))^{\s(r,k)}=1,\label{eqn:product}\end{equation}
  where the product is taken over all paired strands $r$ and $s$ in
  the ruling between $c_k$ and $c_{k+1}$.

  Clearly this is true for $k=0$. Induct on $k$. Suppose equation
  \eqref{eqn:product} is true for $k-1$. We will show that equation
  \eqref{eqn:product} holds for $k$. If the ruling is not switched at
  $c_k$, then the result is clear. If $c_k$ has configuration type
  $+(a)$, then, by Figure \ref{fig:basepoints},
  \[\epsilon(a^k_{rs})=\begin{cases}
    \epsilon(c_k)\epsilon(a^{k-1}_{K,i+1})&(r,s)=(K,i+1)\\
    \epsilon(c_k)\epsilon(a^{k-1}_{iL})&(r,s)=(i,L)\\
    \epsilon(a^{k-1}_{rs})&\other
  \end{cases}\] and
  \[\s(K,k)=-\s(i,k),\quad \s(r,k)=\s(r,k-1)\] for all strands $r$ and
  $s$ paired in the ruling between $c_k$ and $c_{k+1}$. Thus
  \begin{align*}
    \prod_{r,s}(\epsilon(a^k_{rs}))^{\s(r,k)}&=(\epsilon(c_k)\epsilon(a^{k-1}_{K,i+1}))^{\s(K,k)}(\epsilon(c_k)\epsilon(a^{k-1}_{iL}))^{\s(i,k)}\prod_{(r,s)\neq(K,i+1),(i,L)}(\epsilon(a^{k-1}_{rs}))^{\s(r,k)}\\
    &=(\epsilon(c_k))^{-\s(i,k)}(\epsilon(a^{k-1}_{K,i+1}))^{\s(K,k)}(\epsilon(c_k))^{\s(i,k)}(\epsilon(a^{k-1}_{iL}))^{\s(i,k)}\prod_{(r,s)\neq(K,i+1),(i,L)}(\epsilon(a^{k-1}_{rs}))^{\s(r,k-1)}\\
    &=(\epsilon(a^{k-1}_{K,i+1}))^{\s(K,k-1)}(\epsilon(a^{k-1}_{iL}))^{\s(i,k-1)}\prod_{(r,s)\neq(K,i+1),(i,L)}(\epsilon(a^{k-1}_{rs}))^{\s(r,k-1)}\\
    &=\prod_{r,s}(\epsilon(a^{k-1}_{rs}))^{\s(r,k-1)}\\
    &=1.
  \end{align*}
  Similarly, we can see the same is true if $c_k$ has configuration
  $+(b)$ or $+(c)$ since $\s(r,k)=\s(r,k-1)$ for all strands $r$ and
  $s$ which are paired in the ruling between $c_k$ and $c_{k+1}$.

  In particular, the result is true for $k=n$. Since
  \[\partial
  q_\ell=t_\ell^{\s(2m-2\ell+2,2m-2\ell+1)}+a^n_{2m-2\ell+2,2m-2\ell+1},\]
  we know
  \[0=\epsilon\partial
  q_\ell=(\epsilon(t_\ell))^{\s(2m-2\ell+2,n)}+\epsilon(a^n_{2m-2\ell+2,2m-2\ell+1})\]
  for all $1\leq\ell\leq m$. Thus
  \[\epsilon(t_\ell)=-(\epsilon(a^n_{2m-2\ell+2,2m-2\ell+1}))^{\s(2m-2\ell+2,n)}\]
  and so, if $s$ is the number of base points, then
  \begin{align*}
    \epsilon(t)&=\prod_{\ell=1}^s\epsilon(t_\ell)=(-1)^{s-m}\prod_{\ell=1}^m\left(-(\epsilon(a^n_{2m-2\ell+2,2m-2\ell+1}))^{\s(2m-2\ell+2,n)}\right)\\
    &=(-1)^s\prod_{\ell=1}^m(\epsilon(a^n_{2m-2\ell+2,2m-2\ell+1}))^{\s(2m-2\ell+2,n)}\\
    &=(-1)^s\\
    &=-1
  \end{align*}
  as by Lemma \ref{lem:oddNumBasepts} we know we have an odd number of
  base points.
\end{proof}

Recall that we add an even number of base points if a crossing $c_k$
has configuration (d), (e), (f), or not augmented, two for each $-$(a)
crossing, an odd number for each $+$(a), $\pm$(b), $\pm$(c), and one
for each right cusp. Thus, to show there are an odd number of base
points, it suffices to show the following: (The following argument was
communicated to the author by Lenhard Ng.)

\begin{lem}\label{lem:oddNumBasepts}
  If $c$ gives the number of right cusps, $s$ is the number of
  switches in the ruling, and $a_-$ is the number of $-$(a) crossings,
  then
  \[c+s+a_-\equiv1\mod2.\]
\end{lem}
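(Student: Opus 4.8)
I would prove the congruence by passing to the surface the ruling builds and running an Euler characteristic count, using a local analysis of the six configurations of Figure~\ref{fig:config} to account for $a_-$. To the normal ruling $R$ associate the ruling surface $F=F_R$: take one disk $D_i$ for each of the $c$ matched left/right cusp pairs, bounded by the two ruling paths joining them, and attach a band along $\partial F$ at each of the $s$ switched crossings (untwisted or half-twisted according to the local picture at that switch). Each band is a $1$-handle, so $\chi(F)=c-s$. Since $\Lambda$ is a \emph{knot}, $\partial F=\Lambda$ is connected, so $F$ is connected with a single boundary circle; hence $F$ is either $\Sigma_{g,1}$ (so $\chi=1-2g$) or $N_{k,1}$ (so $\chi=1-k$ with $k\ge 1$ crosscaps). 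In both cases $c-s=\chi(F)\equiv 1-k\pmod 2$, where I set $k=0$ in the orientable case, so the statement reduces to
\[ k\ \equiv\ a_-\pmod 2 ,\]
since then $c-s+a_-\equiv 1$, i.e. $c+s+a_-\equiv 1\pmod 2$.

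The heart of the matter is then a local claim: orient each disk $D_i$ and follow, band by band, whether the band is half-twisted, i.e. track the restriction of $w_1(F)$ to the switch graph. Using the orientation of $\Lambda$ to orient the boundary together with the case analysis of the six configurations, one should check that the orientation reversal occurs exactly at the $-(a)$ crossings: at a $+(a)$, $\pm(b)$, or $\pm(c)$ switch the two crossing strands are oriented so that the band glues orientation-compatibly, whereas at a $-(a)$ switch it glues with a half-twist. Feeding this back — together with the single-boundary-component constraint, which is what actually pins down $k$ once the twists are known — would give $k\equiv a_-\pmod 2$ and hence the Lemma. As a consistency check, when $\rho$ is even there are no switched negative crossings (the mechanism already used in the proof of Theorem~\ref{thm:rhoEven}), so $a_-=0$, $F$ is orientable, $k=0$, and the count agrees.

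I expect the main obstacle to be exactly this local step: reading off from Figure~\ref{fig:config}, with orientations, that precisely the $-(a)$ switches twist $F$, and then extracting $k\bmod 2$ in the non-orientable case without tacitly re-deriving the very congruence being proved (a single half-twisted band in a cycle contributes two crosscaps, so the count of crosscaps is not literally the count of twisted bands, and one must use the one--boundary--component hypothesis carefully). If the surface bookkeeping becomes unwieldy, I would fall back on a direct left-to-right sweep of the plat diagram, maintaining a $\integers/2$ quantity built from the orientation pattern of the ruling pairs and checking it picks up a $1$ at each right cusp and each non-$(a)$ switch and nothing elsewhere; alternatively, one can argue that $c+s+a_-\bmod 2$ is invariant under the Legendrian Reidemeister moves — under which rulings correspond — and evaluate it on the standard unknot front, where $c=1$ and $s=a_-=0$, giving the value $1$.
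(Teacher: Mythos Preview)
Your surface approach is a genuinely different route from the paper's, which never builds $F$ at all: the paper instead proves four separate congruences $tb+r\equiv 1$, $tb\equiv c+cr$, $cr\equiv s$, and $r\equiv a_-$ (mod $2$) and adds them. The last two are done by left-to-right sweep arguments---counting interlaced ruling pairs for $cr\equiv s$, and tracking a signed sum over ruling pairs (zero if the pair is co-oriented, $\pm 1$ otherwise) for $r\equiv a_-$. So your proposal, if it worked, would replace two bespoke combinatorial invariants by one topological object.

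The difficulty is that your reduction is, as you half-suspect, circular. Once you know $\partial F$ is connected, the crosscap number mod $2$ is \emph{already determined} by $\chi(F)$: for $b=1$ one has $k\equiv 1-\chi(F)\pmod 2$ in both the orientable ($k=0$, $\chi$ odd) and non-orientable cases. So ``$k\equiv a_-\pmod 2$'' is literally the equation $1-(c-s)\equiv a_-\pmod 2$, i.e.\ the lemma itself. The Euler-characteristic input and the one-boundary input together do not leave any independent room to compute $k$; you would need a second, genuinely different computation of $k\bmod 2$ (or of $\chi\bmod 2$) to close the loop.

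Your candidate for that second computation---counting twisted bands---does not supply it. First, with the natural ``plane'' orientation on the ruling disks (upper path to the right, lower to the left), it is the $(b)$ and $(c)$ switches whose bands reverse orientation (the band joins two \emph{top} boundary arcs, both going right), while every $(a)$ band, positive or negative, is orientation-preserving (it joins a bottom arc to a top arc). So the local claim that ``precisely $-(a)$ twists'' fails for this orientation convention; if you meant a different convention you have not said which, and switching conventions changes the twist count by a coboundary. Second, and more fundamentally, the number of twisted bands (for any fixed orientation of the $0$-handles) is not an invariant of $F$ at all---only its class in $H^1$ of the band graph is---and that class records orientability, not $k\bmod 2$. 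So even a correct local identification of twisted bands would not produce $k\bmod 2$ without further structure. Your fallback sweep idea is essentially what the paper does (and it takes two different swept quantities, not one); the Reidemeister-invariance fallback is plausible but is a separate proof requiring its own case check.
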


\begin{proof}
  We will prove this result by showing each of the following
  statements:
  \begin{align}
    &tb+r\equiv\#\text{ components}\mod2\label{state1}\\
    &tb\equiv c+cr\mod2\label{state2}\\
    &cr\equiv s\mod2\label{state3}\\
    &r\equiv a_-\mod2\label{state4}
  \end{align}
  where $r$ is the rotation number and $cr$ is the number of
  crossings. Note that if we add these four equations together, we get
  that
  \[c+s+a_-\equiv\#\text{ components}\mod2.\] Since in our case we
  have a knot, this gives the desired result.

  Statement \ref{state1} is a standard result. Statement \ref{state2}
  results from the fact that the Thurston-Bennequin number is the
  number of right cusps plus the number of crossings counted with
  sign. To prove statement \ref{state3}, we will count the number of
  interlaced pairs from left to right.

  We say that two pairs of points are {\bf interlaced} if we encounter
  the pairs alternately as we move vertically. In other words, if
  $a_i$ denotes one pair of companion strands and $b_i$ denotes the
  other, then they appear from top to bottom as $a_1b_1a_2b_2$.

  Note that the number of interlaced pairs does not change as we go
  from left to right over a switched crossing and changes by $\pm1$ as
  we go from left to right over a nonswitched crossing. We also know
  that we have zero interlaced pairs at the left and right
  cusps. Thus, the number of nonswitched crossings, which is equal to
  the number of crossings minus the number of switched crossings, is
  even, which gives
  \[cr\equiv s\mod2.\]

  The proof of statement \ref{state4} will be a little more
  involved. First, at any vertical segment of the dipped diagram which
  does not include a crossing, if $r$ and $s$ ($r>s$) are paired,
  assign the pair the number 0 if they are oriented the same way and
  $\s(r,k)$ as defined in Theorem \ref{thm:rhoEven} otherwise.  To any
  such vertical slice of the diagram, associate the sum of these
  numbers over the ruled pairs in that slice.
  For example, Figure \ref{fig:leftTrefoil} gives the assignments for
  the given ruling of the left handed trefoil.
  
  \begin{figure}
    \labellist
    \pinlabel $+$ [l] at 426 1123
    \pinlabel $-$ [l] at 426 628
    \pinlabel $+$ [l] at 426 132
    \pinlabel $+1$ [t] at 426 -60
    \pinlabel $+$ [l] at 1039 1123
    \pinlabel $-$ [l] at 1132 628
    \pinlabel $+$ [l] at 1072 132
    \pinlabel $+1$ [t] at 1070 -60
    \pinlabel $+$ [l] at 1698 1123
    \pinlabel $-$ [l] at 1616 628
    \pinlabel $+$ [l] at 1708 132
    \pinlabel $+1$ [t] at 1680 -60
    \pinlabel $+$ [l] at 2243 879
    \pinlabel $-$ [l] at 2149 628
    \pinlabel $+$ [l] at 2253 127
    \pinlabel $+1$ [t] at 2200 -60
    \pinlabel $+$ [l] at 2790 879
    \pinlabel $-$ [l] at 2689 628
    \pinlabel $+$ [l] at 2790 379
    \pinlabel $+1$ [t] at 2740 -60
    \pinlabel $+$ [l] at 3384 879
    \pinlabel $-$ [l] at 3283 628
    \pinlabel $+$ [l] at 3384 379
    \pinlabel $+1$ [t] at 3330 -60
    \pinlabel $+$ [l] at 3935 879
    \pinlabel $-$ [l] at 3834 628
    \pinlabel $+$ [l] at 3935 379
    \pinlabel $+1$ [t] at 3880 -60
    \pinlabel $-$ [l] at 4464 879
    \pinlabel $+$ [l] at 4362 628
    \pinlabel $-$ [l] at 4464 379
    \pinlabel $-1$ [t] at 4410 -60
    \pinlabel $-$ [l] at 5037 1123
    \pinlabel $+$ [l] at 4941 628
    \pinlabel $-$ [l] at 5037 379
    \pinlabel $-1$ [t] at 4990 -60
    \pinlabel $-$ [l] at 5596 1123
    \pinlabel $+$ [l] at 5489 628
    \pinlabel $-$ [l] at 5596 134
    \pinlabel $-1$ [t] at 5510 -60
    \pinlabel $-$ [l] at 6160 1123
    \pinlabel $+$ [l] at 6034 628
    \pinlabel $-$ [l] at 6160 134
    \pinlabel $-1$ [t] at 6120 -60
    \pinlabel $-$ [l] at 6692 1123
    \pinlabel $+$ [l] at 6692 628
    \pinlabel $-$ [l] at 6692 134
    \pinlabel $-1$ [t] at 6692 -60
    
    \endlabellist
    \includegraphics[width=6.25in]{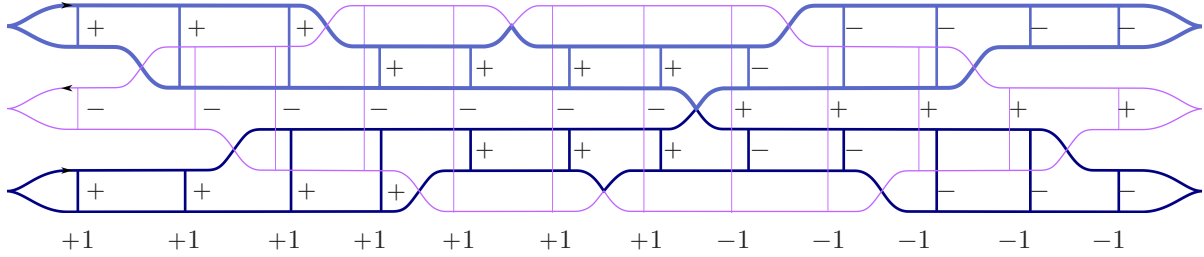}
    \vspace{.25in}
    \caption{The diagram gives a normal ruling of the left handed trefoil. At
      each vertical slice of the diagram, the paired strands in the
      ruling are decorated with $+/-$, denoting the assignment of
      $+1/-1$ to the corresponding paired strands. The number below the
      vertical slice gives the assigned sum over the ruled pairs.}
    \label{fig:leftTrefoil}
  \end{figure}

  One can check that this count goes up by $\pm2$ as you go over a
  $-(a)$ crossing and otherwise does not change. At the left cusps, we
  compute the sum to be $u_L-d_L$, where $u_L$ is the number of up
  cusps and $d_L$ the number of down. At the right cusps, we compute
  the sum to be $d_R-u_R$, where $u_R$ and $d_R$ are defined
  analogously. Therefore we have
  \[(d_R-u_R)\equiv(u_L-d_L)+2a_-\mod4.\] So
  \[2r\equiv2a_-\mod4\] and thus
  \[r\equiv a_-\mod2.\]
\end{proof}

The augmentation variety is more complicated when $\rho$ is odd. Given
a $\rho$-graded augmentation to a field $F$, once again, consider the
associated $\rho$-graded ruling.

\begin{rmk}\label{rmk:squared}
  Looking at the various configurations for the switched crossings
  (see Figure \ref{fig:basepoints}), we see that for all paired
  strands $r$, $s$ between $c_k$ and $c_{k+1}$ with $r>s$,
  \[\epsilon(a^k_{rs})=x^2\]
  for some $x\in F$ with $x\neq0$. As before, since
  \[\partial q_k=t_k^{\pm1}+a^n_{2m-2k+2,2m-2k+1}\]
  we know
  \[\epsilon(t_k)=-(\epsilon(a^n_{2m-2k+2,2m-2k+1}))^{\pm1}=-x^2\]
  for some $x\in F$ with $x\neq0$. It is then clear that, if $s$ is
  the number of base points, then
  \[\epsilon(t)=\prod_{k=1}^s\epsilon(t_k)=(-1)^{s-m}\prod_{k=1}^m\epsilon(t_k)=(-1)^sx^2=-x^2\]
  for some $x\in F$ with $x\neq0$ since, by Lemma
  \ref{lem:oddNumBasepts}, we know that $s$ is odd.
\end{rmk}

The following theorem, restated from the introduction, gives a
slightly more precise result for when there exists a $\rho$-graded
normal ruling for the diagram which is not oriented, meaning a ruling
for which not all ruling strands are oriented unknots.

\begin{thmRhoOdd}
  If $\rho$ is odd and $\rho\vert2r(\Lambda)$, then
  \[\aug_\rho(\Lambda)=\begin{cases}
    \{-x^2:x\in F^*\}&\text{ if there exists a }\rho\text{-graded normal ruling of $\Lambda$ which is not oriented}\\
    \{-1\}&\text{ if there exists a }\rho\text{-graded normal ruling of }\Lambda\text{ and all rulings are oriented}\\
    \emptyset&\text{ if there are no }\rho\text{-graded normal rulings
      of }\Lambda.
  \end{cases}\]
\end{thmRhoOdd}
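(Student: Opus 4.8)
The plan is to split into the three cases of the statement and lean on machinery already in place. The case $\aug_\rho(\Lambda)=\emptyset$ is immediate: if $\Lambda$ has no $\rho$-graded normal ruling then by Theorem \ref{thm:main} the DGA has no $\rho$-graded augmentation, so $\aug_\rho(\Lambda)=\emptyset$. In the other two cases a $\rho$-graded normal ruling exists, so the ruling-to-augmentation direction of Theorem \ref{thm:main} produces a $\rho$-graded augmentation with $\epsilon(t)=-1$; hence $-1\in\aug_\rho(\Lambda)$. Moreover Remark \ref{rmk:squared} already establishes the inclusion $\aug_\rho(\Lambda)\subseteq\{-x^2:x\in F^*\}$ whenever a $\rho$-graded augmentation exists. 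So what remains is to pin $\aug_\rho(\Lambda)$ down from above in the ``all rulings oriented'' case and from below in the ``some ruling non-oriented'' case.

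For the case in which every $\rho$-graded normal ruling of $\Lambda$ is oriented, I would show $\aug_\rho(\Lambda)=\{-1\}$ by reproving the conclusion of Theorem \ref{thm:rhoEven} in this setting. Given any $\rho$-graded augmentation $\epsilon$, the construction of \S\ref{sec:augRuling} attaches to it a $\rho$-graded normal ruling, which by hypothesis is oriented: its ruling paths are oriented unknots, paired strands are oriented oppositely, and the two strands at each switch are oriented oppositely. Exactly as in the proof of Theorem \ref{thm:rhoEven}, this forces every switched crossing to be a positive crossing in configuration $+(a)$, $+(b)$, or $+(c)$, so the sign function $\s(r,k)$ is well defined and the inductive identity $\prod(\epsilon(a^k_{rs}))^{\s(r,k)}=1$ (the product over strands $r,s$ paired between $c_k$ and $c_{k+1}$) goes through verbatim. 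Evaluating at $k=n$ against $\partial q_k=t_k^{\pm1}+a^n_{2m-2k+2,2m-2k+1}$ and using that the total number of base points is odd (Lemma \ref{lem:oddNumBasepts}) then gives $\epsilon(t)=-1$, just as in Theorem \ref{thm:rhoEven}. Together with $-1\in\aug_\rho(\Lambda)$ this yields equality.

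For the case in which $\Lambda$ admits a non-oriented $\rho$-graded normal ruling, I must upgrade $\aug_\rho(\Lambda)\subseteq\{-x^2:x\in F^*\}$ to equality by realizing every $-x^2$, $x\in F^*$. I would fix such a ruling together with a ruling path $\gamma$ that is not an oriented unknot, so that the knot orientation, read around the boundary of the disk $D_\gamma$ bounded by the two strands of $\gamma$, reverses at some switch. Then I would run the ruling-to-augmentation construction of \S\ref{sec:rulingAug} as in Theorem \ref{thm:main}, but ``twisted by $x$ along $\gamma$'': the process proceeds dip by dip exactly as before, except that the (nonzero) augmentation values on the $a/b$-lattice generators lying along $\gamma$ are multiplied by compensating powers of $x$, an assignment that closes up consistently precisely because $D_\gamma$ is non-oriented (for an oriented $D_\gamma$ the same bookkeeping would force $x=1$, recovering the rigidity of the previous case). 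Property (R) is unchanged since only nonzero values are rescaled, and one checks that each totally augmented disk in Table \ref{tab:augDisks} meets $\gamma$ so that the net power of $x$ cancels, so $\epsilon\partial=0$ still holds. Reading off $\partial q_k$ at the right cusp where $\gamma$ terminates, $\epsilon(t_k)$, and hence $\epsilon(t)=\prod_i\epsilon(t_i)$, is multiplied by $x^2$ relative to the untwisted augmentation, giving a $\rho$-graded augmentation with $\epsilon(t)=-x^2$. (The toy model is the left-handed trefoil in the introduction, where $\epsilon(c_3)\in F^*$ is free and $\epsilon(t)=-(\epsilon(c_3))^{-2}$.)

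The main obstacle is this ``twist'' step: specifying exactly which generators are rescaled and by which power of $x$, checking that the rescaled assignment is still an augmentation (i.e., every relation $\epsilon\partial=0$, organized through the totally augmented disks of Table \ref{tab:augDisks}, survives), and confirming that the sole net effect is to multiply $\epsilon(t)$ by $x^2$. In practice this is a careful gauge-type bookkeeping along the boundary of $D_\gamma$, where Theorem \ref{thm:2.20equiv} and Remark \ref{rmk:basepointMoving} can be used to absorb the rescaling into base-point variables where convenient; the non-orientedness of $\gamma$ is exactly the hypothesis that removes the obstruction to closing the bookkeeping up.
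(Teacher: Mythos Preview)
Your handling of the empty case and the ``all rulings oriented'' case matches the paper's: in the latter, the paper also simply observes that the argument of Theorem \ref{thm:rhoEven} only used that every switched crossing is a positive crossing of type $+(a)$, $+(b)$, or $+(c)$, which is exactly what ``oriented'' ensures.

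For the non-oriented case, however, the paper takes a much more concrete and localized route than your gauge-twist sketch, and your sketch has a genuine gap. The paper does not twist along an entire ruling disk; instead it observes that a non-oriented ruling must contain at least one crossing of configuration $-(a)$, $-(b)$, or $-(c)$, and singles out the \emph{last} such crossing $c_k$. It then builds the augmentation of the dipped diagram by setting $\epsilon(c_k)=x^{\pm1}$ (the exponent determined by the sign data $\s(\cdot,k)$ of the ruled pairs at $c_k$), $\epsilon(c_j)=1$ at every other switch, and $0$ elsewhere, and simply reads off the resulting $a$-lattice values from Figure \ref{fig:basepoints}. The choice of the \emph{last} negative switch is the key: to the right of $c_k$ the ruling is oriented, so the $a$-lattice values are transported unchanged to the right cusps via the permutation $\sigma$ tracking paired strands, and one can compute $\epsilon(t)$ directly as a product over right cusps. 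A short case check for $-(a)$, $-(b)$, $-(c)$ (using the sign relation \eqref{eq:signs}) then gives $\epsilon(t)=-x^2$.

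Your ``twist along $\gamma$'' idea is in the right spirit but is not a proof as written. You do not specify which powers of $x$ scale which $a/b$-lattice generators, and the assertion that ``each totally augmented disk in Table \ref{tab:augDisks} meets $\gamma$ so that the net power of $x$ cancels'' is exactly the thing that needs to be shown and is not obvious: at switches of type (b), (c), (e), (f) the disk $\gamma$ interacts with the companion ruling disk through the $b^j_{LK}$ or $b^j_{i+1,i}$ generators, and the cancellation depends delicately on which disk carries the twist and with what exponent. The statement that non-orientedness is ``exactly the hypothesis that removes the obstruction to closing the bookkeeping up'' is an intuition, not an argument. The paper's approach sidesteps all of this by modifying a single $\epsilon(c_k)$ and letting the machinery of Figure \ref{fig:basepoints} do the propagation; you would likely find it easier to adopt that strategy than to make the global twist precise.
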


\begin{proof}
  Suppose there exists a $\rho$-graded normal ruling for $\Lambda$
  which is not oriented. Fix $0\neq x\in F$. Since every ruling is
  oriented on the portion at the left cusps, for it to be an
  unoriented ruling, there has to be a crossing which takes the ruling
  from oriented to unoriented going from left to right. The only
  configurations for the ruling which do this are the crossings with
  configuration $-$(a), $-$(b), or $-$(c). Thus, a normal ruling of
  $\Lambda$ is not oriented if and only if it has a crossing with configuration
  $-$(a), $-$(b), or $-$(c). In fact, any ruling is also oriented at
  the right cusps and so must have at least two crossings where the
  ruling has configuration $-$(a),$-$(b), or $-$(c).

  Consider $\Lambda$ from the last crossing with configuration
  $-$(a),$-$(b), or $-$(c), which we will denote $c_k$, to the right
  cusps. Note that any crossing with configuration $+$(a), $+$(b),
  $+$(c), $\pm$(d), $\pm$(e), $\pm$(f), or not switched preserves the
  orientation of the paired strands in the ruling. In other words,
  whatever orientation the strands in the ruling have just to the
  right $c_k$ is the orientation they have all the way through to the
  right cusps. Let $\sigma\in S_{2m}$ be the permutation of the
  strands so that if strands $r$ and $s$ with $r>s$ are paired in the
  ruling immediately to the right of the crossing $c_k$, then strand
  $\sigma(r)$ is the strand with higher label and $\sigma(s)$ is the
  strand with lower label if we follow the ruled pair to the right
  cusps. (Note that $\sigma(r)=\sigma(s)+1$ and $2\lvert\sigma(r)$.)

  As in the $\rho$ even case, set the orientation $\s(r,j)=1$ if
  strand $r$ is oriented to the right immediately after crossing $c_j$
  and $\s(r,j)=-1$ if strand $r$ is oriented to the left for $k\leq
  j\leq n$. Labeling strands as before, this gives us
  \begin{equation}\label{eq:signs}
    \s(\max(i+1,K),k)\s(\max(i,L),k)=\begin{cases}
      +1&\text{if }c_k\text{ has configuration $-$(a)}\\
      -1&\text{if }c_k\text{ has configuration $-$(b) or $-$(c)}.
    \end{cases}
  \end{equation}

  Set $\ell_r=m+1-\frac{\sigma(r)}2$. Note that $\ell$
  is chosen so that $\sigma(r)$ and $\sigma(s)$ are the strands
  crossing at $q_\ell$. Thus
  \[\partial
  q_{\ell_r}=t_{\ell_r}^{\s(r,k)}+a^n_{\sigma(r),\sigma(s)}\] and
  so
  \[\epsilon(t_{\ell_r})=-(\epsilon(a^n_{\sigma(r),\sigma(s)}))^{\s(r,k)}\]
  since $\s(r,k)=\pm1$.

  Define $\epsilon$, an augmentation to $F$ of the DGA
  $(\calA(\Lambda'),\partial)$ of the dipped diagram $\Lambda'$ of
  $\Lambda$, satisfying Property (R), by
  \[\epsilon(c_j)=\begin{cases}
    x^{\s(K,k)}&j=k\text{ and }c_j\text{ has configuration $-$(a),$-$(c)}\\
    x^{\s(i,k)}&j=k\text{ and }c_j\text{ has configuration $-$(b)}\\
    1&\text{ if the ruling is switched at $c_j$ and }j\neq k\\
    0&\other.
  \end{cases}\] Note that Property (R) tells us that
  \[\epsilon(a^k_{rs})=\epsilon(a^n_{\sigma(r),\sigma(s)})\]
  for all strands $r$ and $s$ paired in the ruling between $c_k$ and
  $c_{k+1}$. We also note that $\epsilon$ must be a $\rho$-graded
  augmentation, since it was defined using a $\rho$-graded normal
  ruling.

  We see that if $c_j$ has configuration $-$(a), then
  \[\epsilon(a^n_{\sigma(r),\sigma(s)})=\begin{cases}
    x^{\s(K,k)}&(r,s)=(K,i+1)\\
    1&r,s\text{ paired in ruling}\\
    0&\other.
  \end{cases}\] If $c_j$ has configuration $-$(b), then
  \[\epsilon(a^n_{\sigma(r),\sigma(s)})=\begin{cases}
    x^{\s(i,k)}&(r,s)=(i,L)\\
    x^{-\s(i,k)}&(r,s)=(i+1,K)\\
    1&r,s\text{ paired in ruling}\\
    0&\other.
  \end{cases}\] Is $c_j$ has configuration $-$(c), then
  \[\epsilon(a^n_{\sigma(r),\sigma(s)})=\begin{cases}
    x^{\s(K,k)}&(r,s)=(K,i+1)\\
    x^{-\s(K,k)}&(r,s)=(L,i)\\
    1&r,s\text{ paired in ruling}\\
    0&\other.
  \end{cases}\]
  
  We know
  \begin{align*}
    \epsilon(t)&=\prod_{j=1}^s\epsilon(t_j)=(-1)^{s-m}\prod_{j=1}^m\epsilon(t_{m-j+1})\\
    &=(-1)^{s-m}\prod_{j=1}^m(-(\epsilon(a^n_{2j,2j-1}))^{\s(2j,n)})\\
    &=(-1)^s(\epsilon(a^n_{\{\sigma(K),\sigma(i+1)\}}))^{\s(\max(K,i+1),k)}(\epsilon(a^n_{\{\sigma(i),\sigma(L)\}}))^{\s(\max(i,L),k)}\mathop{\prod_{j=1}^m(\epsilon(a^n_{2j,2j-1}))^{\s(2j,n)}}_{j\neq m-\ell_{\max(K,i+1)}+1,m-\ell_{\max(L,i)}+1}\\
    &=-(\epsilon(a^n_{\{\sigma(K),\sigma(i+1)\}}))^{\s(\max(K,i+1),k)}(\epsilon(a^n_{\{\sigma(i),\sigma(L)\}}))^{\s(\max(i,L),k)}\\
    &=\begin{cases}
      -(x^{\s(K,k)})^{\s(K,k)}(x^{\s(K,k)})^{\s(i,k)}&c_k\text{ has configuration $-$(a)}\\
      -(x^{-\s(i,k)})^{\s(i+1,k)}(x^{\s(i,k)})^{\s(i,k)}&c_k\text{ has configuration $-$(b)}\\
      -(x^{\s(K,k)})^{\s(K,k)}(x^{-\s(K,k)})^{\s(L,k)}&c_k\text{
        has configuration $-$(c)}
    \end{cases}\\
    &=-x^2
  \end{align*}
  by equation \eqref{eq:signs}.
  
  By Remark \ref{rmk:squared},
  \[\aug_\rho(\Lambda)\subset\{-x^2:x\in F^*\},\]
  so $\aug_\rho(\Lambda)=\{-x^2:x\in F^*\}.$
  
  \vspace{.2in} Now suppose there exists a $\rho$-graded normal ruling
  for $\Lambda$ and all $\rho$-graded normal rulings of $\Lambda$ are
  oriented. In this case, the ruling must only have switched crossings
  with configuration $+$(a), $+$(b), $+$(c), (d), (e), or (f). Note
  that the proof of Theorem \ref{thm:rhoEven} only required this be
  the case for the ruling, so the augmentation associated to the
  normal ruling must have $\epsilon(t)=-1$ and so
  $\aug_\rho(\Lambda)=\{-1\}$.

  \vspace{.2in} If there do not exist any $\rho$-graded rulings for
  $\Lambda$, then clearly $\aug_\rho(\Lambda)=\emptyset.$
\end{proof}

\bigskip
\section{Ruling to Augmentation}\label{sec:rulingAug}
To show the backward direction of Theorem \ref{thm:main}, that given a
$\rho$-graded normal ruling of a front diagram of a Legendrian knot,
we can find a $\rho$-graded augmentation of $\A$, it suffices to show
that given a $\rho$-graded normal ruling of a front diagram, there
exists a $\rho$-graded augmentation $\epsilon$ of the dipped
diagram. We will do this by, in some sense, following the same
argument as previously, but backwards. This includes the condition
that the augmentation of the dipped diagram satisfies Property (R).

\bigskip In particular, we will be able to find an augmentation
$\epsilon$ of the dipped diagram satisfying Property (R) for which, if
a crossing $c_k$ is augmented, $\epsilon(c_k)=1$ and such that
$\epsilon(t_1\cdots t_s)=-1$ where $*_1,\ldots,*_s$ are the base
points in the final diagram.

\bigno
\subsection{Definition of Augmentation}
As previously, we can assume the base point $*$ corresponding to $t$
is in the loop of the top right cusp. We can then add one base point
to each right cusp. We will set $\epsilon(t_i)=-1$ ($1\leq i\leq m$),
this will also be true for the base points added subsequently. Note
that we will not need to do any of the ``correction'' calculations for
disks and base points as we are defining the map this way.

\subsubsection{Left}
For any ruling, at the left end of the diagram, we have strand $2k$
paired with $2k-1$ for $1\leq k\leq m$, where $m$ is the number of
right cusps. For $\epsilon$ to satisfy Property (R), we must set
\[\epsilon(b^0_{rs})=0\]
for all $k$ and $\ell$ and
\[\epsilon(a^0_{rs})=\begin{cases}
  1&\text{there exists }k\st r=2k,s=2k-1, 1\leq k\leq m\\
  0&\other.
\end{cases}\]

\subsubsection{Original crossings}
Consider a crossing $c_j$. If the ruling is switched at $c_j$, set
$\epsilon(c_j)=1$. If not, set $\epsilon(c_j)=0$.  (Note that we can
augment the switched crossings to any nonzero element of $F$ and still
get an augmentation, but we may end up with an augmentation where
$\epsilon(t)\neq-1$.)

Add base points and augment crossings in the dips, following Figure
\ref{fig:basepoints}.

\subsection{Properties of the Augmentation}
By the proof that augmentations imply rulings, $\epsilon$ is an
augmentation and by the following, the resulting augmentation
$\epsilon$ on the original undipped diagram with one base point $*$
associated to $t$ satisfies $\epsilon(t)=-1$.

Since we have set $\epsilon(t_i)=-1$ for all $1\leq i\leq s$ and Lemma
\ref{lem:oddNumBasepts} tells us $s$ is odd,
\[\epsilon(t)=\prod_{i=1}^s\epsilon(t_i)=(-1)^s=-1.\]

\bigskip
\section{Lifting Augmentations} \label{sec:lift} 
Given an augmentation to $\integers/2$ of the Chekanov-Eliashberg DGA
over $\integers/2$. We will now use constructions similar to those in
the proof of Theorem \ref{thm:main} to construct a lift of the
augmentation to an augmentation to $\integers$ of the lift of the
Chekanov-Eliashberg DGA and thus that one can construct an
augmentation to any ring. Restating from the introduction:

\begin{thmLift}
  Let $\Lambda$ be a Legendrian knot in $\reals^3$. Let
  $(\Az,\partial)$ be the Chekanov-Eliashberg DGA over $\integers/2$
  and let $(\A,\partial)$ be the DGA over $R=\integers[t,t^{-1}]$. If
  $\epsilon':\Az\to\integers/2$ is an augmentation of
  $(\Az,\partial)$, then one can find a lift of $\epsilon'$ to an
  augmentation $\epsilon:\A\to\integers$ of $(\A,\partial)$ such that
  $\epsilon(t)=-1$.
\end{thmLift}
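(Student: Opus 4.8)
The plan is to run the construction of Section \ref{sec:augRuling} over $\integers$ instead of over a field, using $\epsilon'$ only to fix which switches and branches occur. First I would invoke the $\integers/2$ case of the augmentation--ruling correspondence (Fuchs, Fuchs--Ishkhanov, Sabloff): the augmentation $\epsilon':\Az\to\integers/2$ determines a normal ruling $\Gamma$ of the front diagram of $\Lambda$, and, running the dipping construction of Section \ref{sec:augRuling} with $F=\integers/2$, an augmentation $\epsilon'_{\mathrm{dip}}$ of the fully dipped diagram over $\integers/2$ whose value on each original crossing $c_k$ is $\epsilon'(c_k)$ and whose values on the $a/b$-lattices are those prescribed by Figure \ref{fig:basepoints}. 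I keep $\Gamma$ together with the record of which branch of Figure \ref{fig:basepoints} is used at each $c_j$.

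Next I would build an integral augmentation of the dipped diagram. Working over $\integers[t_1^{\pm1},\dots,t_s^{\pm1}]$, where $t_1,\dots,t_s$ are the base-point variables of that dipped diagram, define $\epsilon$ by exactly the same recipe: on each original crossing set $\epsilon(c_j)$ to be the lift $0$ or $1$ of $\epsilon'(c_j)$; on each base point set $\epsilon(t_i)=-1$; and on the $b$- and $a$-lattices use the formulas of Figure \ref{fig:basepoints} in the branch dictated by $\Gamma$. The only divisions occurring are by $\epsilon(c_j)$ at a switched crossing (where $\epsilon(c_j)=1$), by products of augmented lattice values (which, by Property (R) and induction from the left cusps, are $\pm1$), and by $\epsilon(t_i)=-1$; so every formula makes sense over $\integers$ and $\epsilon$ takes values in $\{-1,0,1\}$. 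One checks $\epsilon\circ\partial=0$ over $\integers$ crossing by crossing: $\partial c_j=0$ and $\partial q_k=t_k^{\pm1}+a^n_{2m-2k+2,2m-2k+1}$, while on the $b$- and $a$-lattices the identities to verify are precisely those of Case 1 and Case 2 of Section \ref{sec:augRuling}; each of these involves only the orientation signs $\pm1$, the unit values just named, and products, and so holds over $\integers$ verbatim. Since $\epsilon$ reduces mod $2$ to $\epsilon'_{\mathrm{dip}}$, the value $\epsilon(a^n_{2m-2k+2,2m-2k+1})$ (which is $\pm1$) reduces to $1$, and the base-point placements of Figure \ref{fig:basepoints}, exactly as in Section \ref{sec:rulingAug}, are arranged so that this sign is in fact $+1$, which is what makes $\epsilon\partial q_k=0$ hold over $\integers$ and not merely mod $2$.

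Finally I would push $\epsilon$ down to $\A$. The dipped diagram arises from the plat diagram by a sequence of type II moves, each of which realizes the dipped DGA as a stabilization $S_{\lvert b\rvert}(\cdot)$ through the elementary isomorphism $\Phi$ used to describe a type II move; since every such $\Phi$ fixes the original crossings $c_k$, their composite identifies the dipped DGA with an iterated stabilization of $\A$ under which each $c_k$ is still sent to $c_k$. Restricting $\epsilon$ along this identification, and then combining $*_1,\dots,*_s$ back to the single base point $*$ by Theorems \ref{thm:2.21equiv} and \ref{thm:2.20equiv} (which multiply each $\epsilon(c_k)$ by factors $\epsilon(t_i)^{\pm1}=\mp1$ and send $t$ to $t_1\cdots t_s$), yields an augmentation $\epsilon:\A\to\integers$ with $\epsilon(c_k)\equiv\epsilon'(c_k)\pmod 2$ for every crossing and $\epsilon(t)=\prod_{i=1}^s\epsilon(t_i)=(-1)^s=-1$, the last equality because $s$ is odd (Lemma \ref{lem:oddNumBasepts}). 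Thus $\epsilon$ makes the stated square commute, and composing with the unique ring map $\integers\to S$ gives an augmentation to any ring $S$. The step I expect to be the main obstacle is precisely the integral verification of the second paragraph: one must be sure the field-theoretic computations of Section \ref{sec:augRuling} never force a division by a non-unit --- this is why it is essential that each $\epsilon(c_j)$ be $0$ or a unit and each base point be $-1$ --- and that the orientation-sign accounting at the right cusps genuinely yields $+1$ rather than $-1$, so that $\epsilon\circ\partial=0$ holds on the nose over $\integers$.
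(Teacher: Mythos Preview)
Your overall architecture matches the paper's---dip, lift on the dipped diagram using the ruling and Figure~\ref{fig:basepoints}, then undip---but the step you flag as ``the main obstacle'' is not the one that actually fails. The real gap is your assertion that ``every such $\Phi$ fixes the original crossings $c_k$.'' It does not. In Lemma~\ref{lem:3.2equiv} the crossings $b_j$ of height at least $h(b)$ are exactly the ones on which $\Phi$ acts nontrivially, and when you add a dip between $c_j$ and $c_{j+1}$ all of $c_{j+1},\ldots,c_n$ sit among those $b_j$'s. This is already visible in Section~\ref{sec:augRuling}: ``$\epsilon_{j+1}$ will agree with $\epsilon_j$ on the diagram to the left of $c_j$ though\ldots they may differ on $c_{j+1},\ldots,c_n$.'' Consequently your first-paragraph claim that $\epsilon'_{\mathrm{dip}}(c_k)=\epsilon'(c_k)$ is false in general, and in the last paragraph the restriction of your integral $\epsilon$ along the stabilization does \emph{not} simply return the values you put in on the $c_k$.

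What is missing is precisely the computation the paper carries out at the end of its proof: undipping introduces, for each $c_j$, a sum of correction terms coming from Lemma~\ref{lem:3.2equiv}, and one must show that these corrections over $\integers$ agree mod $2$ with the corrections that arose when $\epsilon'$ was dipped over $\integers/2$, so that the two sets of corrections cancel and $\epsilon(c_j)\equiv\epsilon'(c_j)\pmod 2$. This uses that the disks being counted are literally the same in both theories and that $\widetilde\epsilon\equiv\widehat\epsilon\pmod 2$ on the dipped diagram. Without this argument you have produced \emph{some} integral augmentation with $\epsilon(t)=-1$ (that much follows from Section~\ref{sec:rulingAug}), but not one that lifts the given $\epsilon'$.
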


\begin{proof}
  Recall that $\calE_i=\calA(e_1,e_2)$ where $\lvert e_1\vert=i-1$,
  $\lvert e_2\rvert=i$, $\partial(e_2)=e_1$, and $\partial(e_1)=0$ and
  $S_i(\calA_R(a_1,\ldots,a_n))=\calA_R(a_1,\ldots,a_n,e_1,e_2)$.

  Note that, for any augmentation $\epsilon$ on $\calA_R$ to
  $\integers$, there exists an augmentation $\widehat\epsilon$ on
  $S(\calA_R)$ to $\integers$ which agrees with $\epsilon$ on
  $\calA_R\subset S(\calA_r)$ and for any augmentation
  $\widehat\epsilon$ on $S(\calA_R)$ to $\integers$, there exists an
  augmentation $\epsilon$ on $\calA_R$ to $\integers$ which agrees
  with $\epsilon$ on $\calA_R\subset S(\calA_R)$. And, we have the
  analogous result for any augmentation of $\Az$. Thus, clearly one
  can find a lift $\epsilon:\calA_R\to\integers$ of
  $\epsilon':\Az\to\integers/2$ if and only if one can find a lift
  $\epsilon:S(\calA_R)\to\integers$ of
  $\epsilon:S(\Az)\to\integers/2$.

  So, if there exists a lift for $\calA$, then there exists a lift for
  any stable tame isomorphic differential graded algebra. Therefore,
  to show the result, it suffices to show there exists a lift of the
  augmentation to $\integers/2$ of differential graded algebras of
  knots in plat position. So we may assume $\Lambda$ is in plat
  position.

  Given an augmentation $\epsilon':\Az\to F$ of the
  Chekanov-Eliashberg DGA over $\integers/2$. Using Lemma
  \ref{lem:3.2equiv} modulo $2$ and the definition given in Figure
  \ref{fig:basepoints} mod $2$, we can extend $\epsilon'$ to an
  augmentation $\widehat\epsilon:\widehat\Az\to F$ of the DGA over
  $\integers/2$ for the dipped diagram of $\Lambda$. We saw that if we
  know $\widehat\epsilon(c_i)$ and the augmentation on the
  $a^i/b^i$-lattices for $i<j$, then
  \[\widehat\epsilon(c_j)\equiv\epsilon'(c_j)+\sum_{i=0}^{j-1}\sum_{k,\ell}\sum_p\widehat\epsilon(Q_pb^i_{k\ell}R_p)\mod2\]
  where, for $0\leq i<j$, $\partial
  b^i_{k\ell}=P+\sum_pQ_pa^i_{k\ell}R_p$ before passing strand $k$
  over strand $\ell$ in the creation of the dip between $c_i$ and
  $c_{i+1}$ and $P$ is the sum of terms which do not contain
  $a^i_{k\ell}$ with our labeling convention. This is the same as the
  construction introduced in \cite{SabloffAug}. From \cite{SabloffAug}
  we know that this augmentation satisfies Property (R).

  Let $(\widetilde{\calA_Z},\widetilde\partial)$ be the lift of the
  Chekanov-Eliashberg DGA over $\integers/2$ to a DGA over
  $Z=\integers[t_1^{\pm1},\ldots,t_s^{\pm1}]$ of the DGA over
  $\integers[t_1^{\pm1},\ldots,t_s^{\pm1}]$ of the dipped diagram of
  $\Lambda$. Define $\widetilde\epsilon:\widetilde{\calA_Z}\to F$ by
  \[\widetilde\epsilon(c_j)=\begin{cases}
    1&\text{ if }\widehat\epsilon(c_j)\neq0\\
    0&\other.
  \end{cases}\] on the original crossings, define $\widetilde\epsilon$
  as given by Figure \ref{fig:basepoints} for all other crossings, add
  base points where indicated in Figure \ref{fig:basepoints}, and
  define
  \[\widetilde\epsilon(t_i)=\begin{cases}
    -\widetilde\epsilon(a^n_{2m-2i+2,2m-2i+1})&\text{ if }1\leq i\leq m\\
    -1&\text{ if }m<i\leq s.
  \end{cases}\] Note that all crossings and base points are augmented
  to $0$ or $\pm1$. One can check that with this definition,
  $\widetilde\epsilon$ is an augmentation of the dipped diagram of
  $\Lambda$. Note that as the same original crossings are augmented in
  the dipped diagram, this augmentation must correspond to the same
  ruling as $\widehat\epsilon$ and by definition, satisfies Property
  (R). So, clearly,
  \[\widetilde\epsilon(c)\equiv\widehat\epsilon(c)\mod2\] for all
  crossings $c$ in the dipped diagram of $\Lambda$.

  We will use induction on $k$ to show that
  \[\prod\widetilde\epsilon(a^k_{pq})=1,\] where the product is taken
  over all paired strands $p$ and $q$, for all $1\leq k\leq n$ and
  thus, that
  \[\prod_{i=1}^s\widetilde\epsilon(t_i)=-1.\]
  Since $\widetilde\epsilon(a^0_{pq})=1$ for $(p,q)=(2m-2k+2,2m-2k+1)$
  for some $k$ such that $1\leq k\leq m$, we know
  \[\prod_{p,q}\widetilde\epsilon(a^0_{pq})=1.\]
  Looking at Figure \ref{fig:basepoints}, we see that
  \[\frac{\prod_{p,q}\widetilde\epsilon(a^k_{pq})}{\prod_{p,q}\widetilde\epsilon(a^{k-1}_{pq})}=\left\{\begin{array}{ll}
    (\widetilde\epsilon(c_{k-1}))^2&\text{ if the ruling has configuration (a) near }c_{k-1}\\
    1&\other
  \end{array}\right\}=1,\]
  since $\widetilde\epsilon(c_{k-1})=\pm1$. Thus, if
  $\prod\widetilde\epsilon(a^{k-1}_{pq})=1$, then
  $\prod\widetilde\epsilon(a^k_{pq})=1$. So, in particular,
  $\prod\widetilde\epsilon(a^n_{pq})=1$. Thus
  \begin{align*}
    \prod_{i=1}^s\widetilde\epsilon(t_i)&=(-1)^{s-m}\prod_{i=1}^m\widetilde\epsilon(t_i)=(-1)^{s-m}\prod_{i=1}^m(-\widetilde\epsilon(a^n_{2m-2i+2,2m-2i+1}))\\
    &=(-1)^s\prod_{i=1}^m\widetilde\epsilon(a^n_{2m-2i+2,2m-2i+1})=(-1)^s=-1,
  \end{align*}
  since Lemma \ref{lem:oddNumBasepts} tells $s$ is odd.

  Lemma \ref{lem:3.2equiv} in its original form also gives us a method
  to define an augmentation of the original diagram from an
  augmentation of the dipped diagram of $\Lambda$. Thus we have the
  augmentation $\epsilon:\calA_Z\to F$ of the original diagram,
  defined by
  \[\epsilon(c_j)=\widetilde\epsilon(c_j)+\sum_{i=0}^{j-1}\sum_{k,\ell}\sum_p\epsilon(b^i_{k\ell};Q'_pa^i_{k\ell}R'_p)(-1)^{\lvert
    \Phi(Q'_p)\rvert}\widetilde\epsilon(Q'_pb^i_{k\ell}R'_p)\] where,
  for $0\leq i<j$, $\partial
  b^i_{k\ell}=P+\sum_p\epsilon(b^i_{k\ell};Q'_pa^i_{k\ell}R'_p)Q'_pa^i_{k\ell}R'_p$
  before passing strand $k$ over strand $\ell$ in the creation of the
  dip between $c_i$ and $c_{i+1}$ and $P$ is the sum of terms which do
  not contain $a^i_{k\ell}$ with our labeling convention. Note that
  the ``correction'' disks in the $\integers/2$ case are the same as
  the ``correction'' disks in the
  $\integers[t_1^{\pm1},\ldots,t_s^{\pm1}]$ case, but the
  $\integers[t_1^{\pm1},\ldots,t_s^{\pm1}]$ ``correction'' disks may be
  counted with negative sign and the disk may have extra corners at
  base points. Recall that $\widetilde\epsilon(t_i)=-1$ for $m<i\leq
  s$. Thus
  \[\widetilde\epsilon(Q'_p)\equiv\widetilde\epsilon(Q_p)\mod2,\quad\quad
  \widetilde\epsilon(R'_p)\equiv \widetilde\epsilon(R_p)\mod2,\] since
  the disk which contributes $Q'_p$ (resp. $R'_p$) to the differential
  may have extra corners at base points $t_i$ for $m<i\leq s$ (base
  points not occurring at right cusps) which the disk which
  contributes $Q_p$ (resp. $R_p$) to the differential does not have.

  We will now show that $\epsilon$ is, in fact, a lift of $\epsilon'$.
  \begin{align*}
    \epsilon(c_j)&=\widetilde\epsilon(c_j)+\sum_{i=0}^{j-1}\sum_{k,\ell}\sum_p\epsilon(b^i_{k\ell};Q'_pa^i_{k\ell}R'_p)(-1)^{\lvert\Phi(Q'_p)\rvert}\widetilde\epsilon(Q'_pb^i_{k\ell}R'_p)\\
    &\equiv\widetilde\epsilon(c_j)+\sum_{i=0}^{j-1}\sum_{k,\ell}\sum_p\widetilde\epsilon(Q_pb^i_{k\ell}R_p)\mod2\\
    &\equiv\left(\epsilon'(c_j)+\sum_{i=0}^{j-1}\sum_{k,\ell}\sum_p\widehat\epsilon(Q_pb^i_{k\ell}R_p)\right)+\sum_{i=0}^{j-1}\sum_{k,\ell}\sum_p\widetilde\epsilon(Q_pb^i_{k\ell}R_p)\mod2\\
    &\equiv\epsilon'(c_j)+2\sum_{i=0}^{j-1}\sum_{k,\ell}\sum_p\widehat\epsilon(Q_pb^i_{k\ell}R_p)\mod2\\
    &\equiv\epsilon'(c_j)\mod2,
  \end{align*}
  since $\widetilde\epsilon$ is a lift of $\widehat\epsilon$. Note
  that this shows that the resulting augmentation of the DGA over
  $\integers[t_1^{\pm1},\ldots,t_s^{\pm1}]$ is a lift and so, by the
  discussion of moving and adding base points in \S
  \ref{sec:basepoints}, the augmentation of the DGA over
  $\integers[t,t^{-1}]$ is a lift as well, and
  \[\epsilon(t)=\prod_{i=1}^s\epsilon(t_i)=-1.\] And, since
  $\integers$ embeds in any ring $R$, we can also use $\epsilon'$ to
  define an augmentation $\epsilon:\A\to R$ with $\epsilon(t)=-1$.
\end{proof}

\bibliographystyle{plain}
\bibliography{AugmentationsAndRulings}{}

\end{document}